 \newtheorem{thm}{Theorem}[section]
 \newtheorem{cor}[thm]{Corollary}
 \newtheorem{lem}[thm]{Lemma}
 \newtheorem{prop}[thm]{Proposition}
 \theoremstyle{definition}
  \newtheorem*{ack}{Acknowledgments}
\newtheorem{prob}{Problem}[section]
 \theoremstyle{remark}
 \newtheorem{rem}[thm]{Remark}
 \numberwithin{equation}{section}
\newcommand{\g}{\varphi}
\begin{document}
\title[The isotropic horospherical $p$-Minkowski problem in hyperbolic plane]{Classification of solutions to the isotropic\\
horospherical $p$-Minkowski problem in hyperbolic plane}

\author{Haizhong Li}
\address{Department of Mathematical Sciences, Tsinghua University, Beijing 100084, P.R. China}
\email{\href{mailto:lihz@tsinghua.edu.cn}{lihz@tsinghua.edu.cn}}

\author{Yao Wan}
\address{Department of Mathematical Sciences, Tsinghua University, Beijing 100084, P.R. China}
\email{\href{mailto:y-wan19@mails.tsinghua.edu.cn}{y-wan19@mails.tsinghua.edu.cn}}

\keywords{Horospherical $p$-Minkowski problem; Hyperbolic plane; Fully nonlinear ODE; Uniqueness; Nonuniqueness}
\subjclass[2020]{52A55; 53A04; 34C25}


\begin{abstract}
In \cite{LX}, the first author and Xu introduced and studied the horospherical $p$-Minkowski problem in hyperbolic space $\mathbb{H}^{n+1}$. In particular, they established the uniqueness result for solutions to this problem when the prescribed function is constant and $p\ge -n$. This paper focuses on the isotropic horospherical $p$-Minkowski problem in hyperbolic plane $\mathbb{H}^{2}$, which corresponds to the equation
\begin{equation}\label{0}
     \varphi^{-p}\left(\varphi_{\theta\theta}-\frac{\varphi_{\theta}^2}{2\varphi}+\frac{\varphi-\varphi^{-1}}{2}\right)=\gamma\quad\text{on}\ \mathbb{S}^1,
\end{equation}
where $\gamma$ is a positive constant. We provide a classification of solutions to (\ref{0}) for $p\ge -7$, as well as a nonuniqueness result of solutions to (\ref{0}) for $p<-7$. Furthermore, we extend this problem to the isotropic horospherical $q$-weighted $p$-Minkowski problem in hyperbolic plane and derive some uniqueness and nonuniqueness results.
\end{abstract}

\maketitle

\section{Introduction}

The classical Brunn-Minkowski theory focuses on the convex geometry and geometric analysis of convex bodies by use of the Minkowski sum, including geometric measures and geometric inequalities of convex bodies, see, for example Schneider's book \cite{Sch14}. In recent decades, this theory has been extended to the $L_p$ Brunn-Minkowski theory and the dual Brunn-Minkowski theory, see e.g. \cites{CW06,HLY16,HZ18,HLY05,Lut75,Lut93,Lut96,LYZ04,LYZ18} and references therein. Furthermore, the $L_p$ dual Minkowski problem is a common generalization of the $L_p$ Minkowski problem and the dual Minkowski problem. Building upon the work of Ben Andrews \cite{Ben03}, who classified the solutions to the planar isotropic $L_p$ Minkowski problem, we provided a classification of solutions to the planar isotropic $L_p$ dual Minkowski problem in \cite{LW22}.

For the theory of convex geometry in hyperbolic space, the first author and Xu \cite{LX} introduced the \textit{hyperbolic $p$-sum}, a sum of two sets in hyperbolic space for any $p>0$. This leads to the development of the horospherical $p$-Brunn-Minkowski theory in hyperbolic space. The authors defined the horospherical $k$-th $p$-surface area measure associated with a smooth horospherically convex bounded domain in hyperbolic space, and proposed the following \textit{horospherical $p$-Minkowski problem} for $k=0$:
\begin{prob}[The horospherical $p$-Minkowski problem]\label{prob-horospherical p-Minkowski problem}
    Let $p$ be a real number. Given a smooth positive function $f(z)$ defined on $\mathbb{S}^n$, what are necessary and sufficient conditions for $f$, such that there exists a smooth uniformly h-convex bounded domain $K \subset \mathbb{H}^{n+1}$ for which its horospherical $p$-surface area measure has density $f$, i.e.
	\begin{align*}
	d S_{p}(K,z) = f(z)d\sigma. 
	\end{align*} 
	That is, finding a smooth positive solution to
	\begin{align*}
	\g^{-p}(z) \det (A[\g(z)]) =f(z),
	\end{align*}
	such that the matrix
    \begin{align*}
        A_{ij}[\g(z)]=D_j D_i \g - \frac{1}{2} \frac{|D \g|^2}{\g} \sigma_{ij} + \frac{1}{2} \left(\g- \frac{1}{\g}\right)\sigma_{ij} >0
    \end{align*} 
    for all $z \in \mathbb{S}^n$, where $\sigma_{ij}$ denotes the standard metric on $\mathbb{S}^n$ and $D$ denotes the Levi-Civita connection with respect to $\sigma_{ij}$.
\end{prob}

By using a new volume preserving flow, the first author and Xu \cite{LX} solved the existence of solutions to the horospherical $p$-Minkowski problem for all $p\in(-\infty,+\infty)$ when the given measure is even. Afterwards, the horospherical $p$-Brunn-Minkowski theory was further investigated by \cites{LW23,LW23-10,HWZ}, etc.

In particular, when $f(z)$ is constant and $n=1$, this problem reduces to the \textit{isotropic horospherical $p$-Minkowski problem in hyperbolic plane}, which corresponds to the following second-order ordinary differential equation
\begin{equation}\label{1.1}
    \varphi^{-p}\left(\varphi_{\theta\theta}-\frac{\varphi_{\theta}^2}{2\varphi}+\frac{\varphi-\varphi^{-1}}{2}\right)=\gamma\quad \text{on}\ \mathbb{S}^1,
\end{equation}
where $\gamma$ is a positive constant. In \cite{LX}, the first author and Xu obtained the uniqueness of solutions to (\ref{1.1}) for $p\ge -1$.

\begin{thm}[\cite{LX}]\label{Thm1.1}
\begin{enumerate}
	\item If $p>1$, then the solution to (\ref{1.1}) must be constant, i.e. $\varphi(\theta)\equiv c$, where $c$ solves the equation
    \begin{equation}\label{1.2}
		c^{1-p}-c^{-1-p}=2\gamma.
	\end{equation}
     Denote $\gamma_0 =\frac{(p-1)^{\frac{p-1}{2}}}{(p+1)^{\frac{p+1}{2}}}$. When $0<\gamma<\gamma_0$, (\ref{1.2}) has exactly two solutions. When $\gamma=\gamma_0$, (\ref{1.2}) has a unique solution. When $\gamma>\gamma_0$, (\ref{1.2}) has no solution.
 
	\item If $p =1$, then the solution to (\ref{1.1}) must be constant that satisfies (\ref{1.2}). When $0<\gamma<\frac{1}{2}$, (\ref{1.2}) has a unique solution. When $\gamma\ge \frac{1}{2}$, (\ref{1.2}) has no solution.
 
	\item If $-1<p< 1$, then the solution to (\ref{1.1}) must be constant that satisfies (\ref{1.2}). For each $\gamma>0$, (\ref{1.2}) has a unique solution. 
    
	\item If $p=-1$, then the solutions to (\ref{1.1}) are given by
    \begin{equation}\label{1.3}
        \varphi(\theta)=\alpha\cos\left(\theta-\theta_0\right)+\beta,
    \end{equation}
    where $\theta_0\in\mathbb{R},\ \alpha\in(-\sqrt{1+2\gamma},\sqrt{1+2\gamma})$ and $\beta=\sqrt{1+2\gamma-\alpha^2}$. 
\end{enumerate}
\end{thm}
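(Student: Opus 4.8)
The plan is to turn the autonomous second-order ODE \eqref{1.1} into a first-order first integral and then read off the whole classification from the shape of the resulting potential. Since \eqref{1.1} is autonomous, on any interval where $\varphi$ is monotone I would regard $w=\varphi_\theta^2$ as a function of $\varphi$ and write $\varphi_{\theta\theta}=\tfrac12\,dw/d\varphi$, which converts the equation into the \emph{linear} first-order ODE $\frac{dw}{d\varphi}-\frac{w}{\varphi}=2\gamma\varphi^{p}-\varphi+\varphi^{-1}$. Integrating with the factor $\varphi^{-1}$ gives
\[
\varphi_\theta^2=G(\varphi):=\frac{2\gamma}{p}\varphi^{p+1}-\varphi^2+C\varphi-1
\]
for a constant $C$ (with the power replaced by $2\gamma\,\varphi\ln\varphi$ when $p=0$). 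A short computation shows $\frac{d}{d\theta}\bigl(\varphi_\theta^2-G(\varphi)\bigr)=\varphi_\theta\bigl(2\varphi_{\theta\theta}-G'(\varphi)\bigr)\equiv0$, so this is a genuine first integral valid across critical points. A constant solution $\varphi\equiv c$ is exactly an equilibrium, i.e.\ a double root of $G$, which is equivalent to \eqref{1.2}, $c^{1-p}-c^{-1-p}=2\gamma$.

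The counting of constant solutions in parts (1)–(3) then reduces to analyzing $h(t)=t^{1-p}-t^{-1-p}$ on $(0,\infty)$, using $h'(t)=t^{-2-p}\bigl[(1-p)t^2+(1+p)\bigr]$. For $-1<p<1$ this is strictly positive, so $h$ is an increasing bijection onto $\mathbb{R}$ and \eqref{1.2} has a unique root for every $\gamma>0$; for $p=1$, $h=1-t^{-2}$ increases to $1$, giving a root iff $\gamma<\tfrac12$; for $p>1$, $h$ rises to a maximum at $t_*=\sqrt{(p+1)/(p-1)}$ with $h(t_*)=2\gamma_0$ and then falls to $0^+$, yielding two, one, or no roots according to $\gamma<\gamma_0$, $\gamma=\gamma_0$, $\gamma>\gamma_0$. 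This reproduces the stated counts.

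The crux is excluding every \emph{non-constant} periodic solution when $p>-1$. Such a solution must oscillate between two consecutive simple positive roots $a<b$ of $G$ (with $G>0$ on $(a,b)$; note $G(0^+)=-1<0$, so the roots are positive), whence its half-period equals $\int_a^b d\varphi/\sqrt{G(\varphi)}$, and $2\pi$-periodicity on $\mathbb{S}^1$ forces this to equal $\pi/k$ for some integer $k\ge1$, hence to be $\le\pi$. The key observation is that $g(\varphi):=G(\varphi)+\varphi^2$ satisfies $g''(\varphi)=2\gamma(p+1)\varphi^{p-1}$, so $g$ is strictly convex precisely when $p>-1$. Since $g(a)=a^2$ and $g(b)=b^2$, the chord of $g$ through these endpoints is $L(\varphi)=(a+b)\varphi-ab$, and convexity gives $g\le L$ on $[a,b]$, i.e.\ $G(\varphi)\le L(\varphi)-\varphi^2=(\varphi-a)(b-\varphi)$, strictly on the interior. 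Therefore
\[
\int_a^b\frac{d\varphi}{\sqrt{G(\varphi)}}>\int_a^b\frac{d\varphi}{\sqrt{(\varphi-a)(b-\varphi)}}=\pi,
\]
contradicting the half-period being $\le\pi$. Hence for $p>-1$ every solution is constant, finishing (1)–(3). I expect this comparison—pinning down the convexity criterion and the chord identity—to be the main obstacle, as it is exactly where $p=-1$ becomes critical.

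Finally, for $p=-1$ the function $g$ is affine, so $G$ is itself a downward parabola with leading coefficient $-1$; then $G(\varphi)=(\varphi-a)(b-\varphi)$ exactly and the half-period is identically $\pi$ for every admissible amplitude. Solving the resulting $\varphi_{\theta\theta}=-\varphi+C/2$ explicitly yields the full two-parameter sinusoidal family \eqref{1.3}, which closes up after exactly one period and stays positive; substituting back into \eqref{1.1} fixes the relation between the amplitude and the mean. This gives part (4) and explains why $p=-1$ is precisely the resonant exponent at which the rigidity of (1)–(3) dissolves.
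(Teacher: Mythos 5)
Your argument is correct in substance, but it is not the route behind the paper's statement: Theorem \ref{Thm1.1} is quoted from \cite{LX}, where cases (1)--(3) are proved by an integral rigidity argument based on the Heintze--Karcher type inequality (Lemma \ref{Lem-HK} here) --- one multiplies the equation suitably and integrates by parts to force $\varphi_\theta\equiv 0$, exactly the scheme this paper replays in Section \ref{sec:6} to prove Theorem \ref{Thm1.3}(i) --- with the equality case of that inequality (geodesic balls) producing the $p=-1$ family. You instead use the phase-plane first integral and a time-map estimate; this is the machinery the paper itself develops in Sections \ref{sec:2}--\ref{sec:4}, but only for $p\le -1$, where the bound $\Theta_p<\pi/2$ (in the variables $u=\sqrt{\varphi(2\tau)}$) comes from monotonicity in $p$ together with the exact value \eqref{3.8}, not from your chord comparison. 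Your key observation --- that $g(\varphi)=G(\varphi)+\varphi^2$ is strictly convex precisely when $p>-1$, so $G(\varphi)<(\varphi-a)(b-\varphi)$ on $(a,b)$ and the half-period strictly exceeds $\pi$, while $2\pi$-periodicity forces it to equal $\pi/k\le\pi$ --- is a clean, self-contained substitute for the Heintze--Karcher inequality that correctly isolates $p=-1$ as the critical exponent, and your $p=-1$ computation (half-period identically $\pi$) matches \eqref{3.8} after the change of variables $\theta=2\tau$. Two routine points deserve a line each: the first integral is valid across critical points because $X=\varphi_\theta^2-G(\varphi)$ satisfies the linear equation $X_\theta=(\varphi_\theta/\varphi)X$, so $X\equiv 0$ once $C$ is fixed at one point (your displayed identity tacitly uses this); and at any critical point of a non-constant solution one has $G'(\varphi)\neq 0$ (otherwise ODE uniqueness would make $\varphi$ constant), which is what guarantees that the max and min are consecutive \emph{simple} roots and that the period integral converges.

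In part (4), however, you stop at ``substituting back fixes the relation,'' and this is exactly where a discrepancy hides. Carrying it out, your first integral at $p=-1$ reads $\varphi_\theta^2=-\varphi^2+C\varphi-(1+2\gamma)$, which for $\varphi=\alpha\cos(\theta-\theta_0)+\beta$ forces $\beta=C/2$ and $\beta^2-\alpha^2=1+2\gamma$, i.e.\ $\beta=\sqrt{1+2\gamma+\alpha^2}$ with $\alpha\in\mathbb{R}$ arbitrary; equivalently, direct substitution into \eqref{1.1} with $p=-1$ gives left-hand side $\tfrac{1}{2}(\beta^2-\alpha^2-1)$. This is \emph{not} the printed relation $\beta=\sqrt{1+2\gamma-\alpha^2}$, $|\alpha|<\sqrt{1+2\gamma}$ of \eqref{1.3} (which would yield $\gamma-\alpha^2$ on the left-hand side, and does not even guarantee $\varphi>0$, whereas $\beta=\sqrt{1+2\gamma+\alpha^2}>|\alpha|$ does). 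So the statement as printed appears to carry a sign typo, and your method, pushed to completion, detects and corrects it --- but as written your proposal leaves this final relation unverified and implicitly endorses the printed formula.
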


In this paper, by applying the method employed in \cites{Ben03,LW23}, we establish the following uniqueness of solutions to (\ref{1.1}) for $-7\le p<-1$, and the nonuniqueness of solutions to (\ref{1.1}) for $p<-7$ and $\gamma>\gamma_p$.
\begin{thm}\label{Thm1.2}
\begin{enumerate}[(i)]
    \item If $-7\le p< -1$, then the unique solution to (\ref{1.1}) is $\varphi(\theta)\equiv c$, where $c$ solves the equation (\ref{1.2}).

    \item If $-17\le p< -7$ and $\gamma>\gamma_{p}=\frac{-3}{1+p}\left(\frac{7+p}{1+p}\right)^{\frac{p-1}{2}}$, then there exist at least two solutions to (\ref{1.1}). 
    
    \item Generally, let $k$ and $l$ be two integers such that $1\le l< k$. If $1-2(k+1)^2\le p<1-2k^2$ and $\gamma>\gamma_{p,l}$, where $\gamma_{p,l}$ is given by
    \begin{align*}
        \gamma_{p,l}
        =\frac{1-(l+1)^2}{1+p}\left(\frac{2(l+1)^2-1+p}{1+p}\right)^{\frac{p-1}{2}},
    \end{align*}
then there exist at least $(l+1)$ solutions to (\ref{1.1}). 
Furthermore, for any integer $2\le j\le l+1$, there exists a non-constant solution that corresponds to a h-convex curve $\Gamma_{j,p,\gamma}$ with $j$-fold symmetry in $\mathbb{H}^2$.
\end{enumerate}
\end{thm}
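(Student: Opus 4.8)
The plan is to reduce \eqref{1.1} to an autonomous second-order ODE and read off all its solutions from the phase portrait. The substitution $u=\varphi^{1/2}$ eliminates the first-order term, since $\varphi_{\theta\theta}-\frac{\varphi_\theta^2}{2\varphi}=2u\,u_{\theta\theta}$, so \eqref{1.1} becomes
\[
u_{\theta\theta}=\frac{\gamma}{2}u^{2p-1}-\frac14 u+\frac14 u^{-3}=:G(u),
\]
with first integral $\frac12 u_\theta^2+V(u)=E$, where $V(u)=-\frac{\gamma}{4p}u^{2p}+\frac18 u^2+\frac18 u^{-2}$ (equivalently $(\varphi_\theta)^2=8E\varphi-\varphi^2-1+\frac{2\gamma}{p}\varphi^{p+1}$). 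Since $p<-1$ and $\gamma>0$, $V\to+\infty$ as $u\to0^+$ and as $u\to\infty$, while $V'(u)=0$ reduces to $u^{2-2p}-u^{-2-2p}=2\gamma$, which has a unique root $u_c>1$; via $c=u_c^2$ this is exactly the constant solution of \eqref{1.2}. Thus $V$ is a single-well potential centered at $u_c$, every level $\{V=E\}$ with $E>V(u_c)$ is a closed orbit encircling $u_c$, and every solution of \eqref{1.1} on $\mathbb{S}^1$ is either the constant $u_c$ or a closed orbit of minimal period $2\pi/j$ for some integer $j\ge1$, the latter being precisely a $j$-fold symmetric solution. It is worth noting that $\varphi^{-p}A[\varphi]=\gamma>0$ forces $A[\varphi]=\gamma\varphi^p>0$, so any positive solution is automatically h-convex.

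The next step is to analyze the period function $T(E)$ of these orbits. Using $V'(u_c)=0$ one finds $V''(u_c)=\frac{1-p}{2}+\frac{p+1}{2}u_c^{-4}$; since $p+1<0$ and $u_c$ increases strictly from $1$ to $\infty$ as $\gamma$ runs over $(0,\infty)$, the quantity $V''(u_c)$ increases strictly from $1$ to $\frac{1-p}{2}$. The two boundary values of the period are the linearized period at the center, $\lim_{E\downarrow V(u_c)}T(E)=2\pi/\sqrt{V''(u_c)}$, and the large-energy limit $\lim_{E\to\infty}T(E)=2\pi$; the latter I would justify by comparison with the pure potential $\frac18 u^2+\frac18 u^{-2}$ (the case $\gamma=0$), which in the variable $\varphi=u^2$ is the exactly isochronous oscillator $(\varphi_\theta)^2=8E\varphi-\varphi^2-1$ of period $2\pi$, toward which the dynamics degenerate as the amplitude grows. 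Solving $V''(u_c)=(l+1)^2$ simultaneously with \eqref{1.2} then yields exactly $\gamma=\gamma_{p,l}$, so that $V''(u_c)>(l+1)^2$ if and only if $\gamma>\gamma_{p,l}$; in particular the borderline $j=2$ case $\frac{1-p}{2}=4$ is precisely $p=-7$.

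For the existence statements, fix $\gamma>\gamma_{p,l}$, so $V''(u_c)>(l+1)^2$. For each integer $j$ with $2\le j\le l+1$ one has $2\pi/\sqrt{V''(u_c)}<2\pi/j<2\pi$, so applying the intermediate value theorem to the continuous function $T(E)$ between its center value and its limit $2\pi$ produces an energy $E_j$ with $T(E_j)=2\pi/j$, i.e. a non-constant, h-convex, $j$-fold symmetric solution $\Gamma_{j,p,\gamma}$. Distinct $j$ give geometrically distinct curves, so together with the constant solution we obtain at least $l+1$ solutions, proving (iii); part (ii) is the special case $k=2,\ l=1$, with $\gamma_{p,1}=\gamma_p$. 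Note that existence here needs only continuity of $T$ together with its two limits, not monotonicity.

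For the uniqueness statement (i) I must rule out all non-constant solutions when $-7\le p<-1$, i.e. show $T(E)\ne 2\pi/j$ for every $j\ge1$. Along any orbit $V''(u)=\frac{1-p}{2}+\frac{p+1}{2}u^{-4}$ is increasing in $u$ and bounded above by $\frac{1-p}{2}\le4$, hence $V''(u)<4$ throughout. The variational function $w=u_\theta$ solves the Hill equation $w''+V''(u)w=0$ and vanishes exactly at the turning points, which are half a period apart; a Sturm comparison with $v''+4v=0$ forces consecutive zeros to be more than $\pi/2$ apart, so $T(E)>\pi$, which already excludes every $j\ge2$ (since then $2\pi/j\le\pi$). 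It remains to exclude $j=1$, i.e. to prove the strict upper bound $T(E)<2\pi$, and this is the main obstacle: unlike the lower bound it does not follow from a one-sided Sturm comparison, because $V''(u)\to-\infty$ as $u\to0^+$ so $V''$ has no positive lower bound along large orbits. I expect to handle it by establishing that $T(E)$ is strictly increasing with supremum $2\pi$, via the period-integral monotonicity technique of \cites{Ben03,LW23}, treating $V$ as a perturbation of the isochronous potential $\frac18u^2+\frac18u^{-2}$ and controlling the sign of $T'(E)$. The coincidence that $p=-7$ is exactly where $\sup_u V''=4$ makes the Sturm lower bound sharp, which is what pins down the endpoint of the uniqueness range.
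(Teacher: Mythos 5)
Your parts (ii) and (iii) are correct and follow essentially the paper's own route (Section \ref{sec:5}): continuity of the period function, the center limit $2\pi/\sqrt{V''(u_c)}$ (the paper's (\ref{3.6}), proved via Perdomo's lemma, Lemma \ref{Lem3.2}), the large-energy limit $2\pi$ (the paper's (\ref{3.5})), and the intermediate value theorem; your verification that solving $V''(u_c)=(l+1)^2$ together with (\ref{1.2}) yields exactly $\gamma_{p,l}$, and that $\gamma_{p,1}=\gamma_p$, checks out. The one soft spot is the large-$E$ limit, which you justify only by an informal ``degeneration to the isochronous oscillator''; the paper proves it by the explicit rescaling $u=wt+u_-$ with $w=u_+-u_-\sim\sqrt{E}$, and your sketch would need a quantitative estimate of that kind to be complete.

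Part (i), however, has a genuine gap, and it sits exactly where the paper's real work is. Your Sturm comparison rests on the identity ``$V''(u)=\frac{1-p}{2}+\frac{1+p}{2}u^{-4}$ along any orbit,'' which is false: differentiating $V(u)=-\frac{\gamma}{4p}u^{2p}+\frac18 u^2+\frac18 u^{-2}$ gives
\begin{equation*}
V''(u)=\frac{(1-2p)\gamma}{2}\,u^{2p-2}+\frac14+\frac34\,u^{-4},
\end{equation*}
and your expression arises from substituting the criticality relation $2\gamma=u^{2-2p}-u^{-2-2p}$, which holds only at $u=u_c$. Along an actual orbit all three terms above are positive, so $V''(u)\to+\infty$ as $u\to 0^+$ (not $-\infty$, as you assert) and $V''(u)\to\frac14$ as $u\to\infty$. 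Consequently the bound $V''<4$ fails on every high-energy orbit, whose inner turning point $u_-$ tends to $0$; and since a large coefficient in the Hill equation pushes consecutive zeros of $w=u_\theta$ \emph{closer} together, comparison with $v''+4v=0$ yields no lower bound on the half-period there. Thus your conclusion $T(E)>\pi$, which is what excludes $j\ge 2$, is unproven. (It is true, but the paper obtains it quite differently: monotonicity of $\Theta_p\{\alpha,r\}$ in $\alpha$ (Theorem \ref{Thm4.4}) reduces to the case $\alpha=0$, which is Ben Andrews' Euclidean period function $\hat{\Theta}_p(r)$, whose monotonicity in $r$ and limit $\pi/\sqrt{2-2p}$ as $r\to1$ (Propositions \ref{Prop4.5} and \ref{Prop4.6}) give Corollary \ref{Cor4.7}.) Note also that the symmetric attempt for the upper bound fails too, since $V''\to\frac14<1$ for large $u$.

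The remaining upper bound $T(E)<2\pi$ you explicitly leave open, deferring to ``period-integral monotonicity.'' In the paper this is the other half of the argument, and it does not proceed via monotonicity of $T$ in $E$ (which is never proved, nor needed): it follows from monotonicity of $\Theta_p\{\alpha,r\}$ in $p$ (Lemma \ref{Lem4.1}, Theorem \ref{Thm4.2}) combined with the exact evaluation $\Theta_{-1}\equiv\frac{\pi}{2}$ at the integrable endpoint $p=-1$ (equation (\ref{3.8}), where all solutions are explicit by (\ref{1.3})), giving Corollary \ref{Cor4.3}. So both halves of the two-sided bound $\frac{\pi}{4}<\Theta_p<\frac{\pi}{2}$ that drives uniqueness are missing from your proposal: one rests on an incorrect formula for $V''$, the other is an acknowledged placeholder for precisely the paper's main lemmas in Section \ref{sec:4}.
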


\begin{figure}[htbp]
\centering
\begin{tikzpicture}[scale=0.8][>=Stealth] 

\draw[->](-18,0)--(2.3,0) node[right]{$p$-axis};
\draw[->](0,-1)--(0,6.3) node[above]{$\gamma$-axis};
\foreach \x in {-17, -16, -15, -14, -13, -12, -11, -10, -9, -8, -7, -6, -5, -4, -3, -2, -1, 1, 2} \draw (\x, 1pt) -- (\x, -1pt) node[anchor=north] {$\x$};
\foreach \y in {-1, 1} \draw (1pt, \y) -- (-1pt, \y) node[anchor=east] {$\y$};
\node[below left] at (0,3) {$10$}; \node[below left] at (0,4) {$20$};
\node[below left] at (0,5) {$30$}; \node[below left] at (0,6) {$40$};
\draw [thick, draw=black!100] [domain = -0.15: 0.15] plot ({\x},{2+\x}); 
\draw [thick, draw=black!100] [domain = -0.15: 0.15] plot ({\x},{1.8+\x}); 
\node[below left] at (0,0) {$0$};
\draw [ultra thick, draw=blue] [domain = 1: 2] plot ({\x},{0.313/(\x-0.374)}); 
\filldraw[opacity=0.6, draw=red!100, fill=red!100, domain=1:2] plot(\x,{0.313/(\x-0.374)})--(2,0)--(1,0)--cycle;
\filldraw[opacity=0.6, draw=white, fill=black!20, domain=1:2] plot(\x,{0.313/(\x-0.374)})--(2,6)--(1,6)--cycle;
\filldraw[opacity=0.6, fill=red!100] (1,0.5) circle (0.02);
\filldraw[opacity=0.6, draw=black!20, fill=black!100] (1,0.5)--(1,6);
\filldraw[opacity=0.6, draw=white, fill=blue!100] (-1,0)--(1,0)--(1,6)--(-1,6)--cycle;
\draw [ultra thick, draw=yellow!100] [domain = 0: 6] plot ({-1},{\x});
\filldraw[opacity=0.6, draw=white, fill=blue!100] (-7,0)--(-1,0)--(-1,6)--(-7,6)--cycle;
\node[above] at (-7,6) {{\footnotesize{$p=-7$}}};
\node[above] at (-11.74,6) {{\footnotesize{$\gamma_p=\frac{-3}{1+p}\left(\frac{7+p}{1+p}\right)^{\frac{p-1}{2}}$}}};
\filldraw[opacity=0.6, draw=blue, fill=blue!100, domain=-17.5: -11.74] plot({\x},{-8.6/(\x+10.3)})--(-7,6)--(-7,0)--(-17.5,0)--cycle;
\filldraw[ultra thick, pattern color=black, pattern=north east lines, opacity=0.4, draw=blue!40, domain=-17.5: -11.74] plot({\x},{-8.6/(\x+10.3)})--(-7,6)--(-7,0)--(-17.5,0)--cycle;
\filldraw[opacity=0.6, draw=blue!0, fill=red!100, domain=-17.5: -11.74] plot({\x},{-8.6/(\x+10.3)})--(-11.74,6)--(-17.5,6)--cycle;
\filldraw[ultra thick, pattern color=black, pattern=north east lines, opacity=0.4, draw=blue!40, domain=-17.5: -11.74] plot({\x},{-8.6/(\x+10.3)})--(-11.74,6)--(-17.5,6)--cycle;
\draw[opacity=0.6, draw=blue!100] (-7,0)--(-7,6);
\end{tikzpicture}
\caption{Number $k$ of solutions to (\ref{1.1})}
\caption*{Grey domain: $k=0$. Blue domain: $k=1$. Red domain: $k=2$. Blue domain with shadow: $k\ge1$. Red domain with shadow: $k\ge2$. Yellow line: $k=\infty$.}
\label{fig-1}
\end{figure}


$\ $

In a similar manner as in the case of Euclidean spaces, by modifying the powers of weight $\cosh r=\frac{|D \g|^2}{2\g}+\frac{\g+\g^{-1}}{2}$, we naturally extend Problem \ref{prob-horospherical p-Minkowski problem} to the \textit{horospherical $q$-weighted $p$-Minkowski problem}:
\begin{prob}[The horospherical $q$-weighted $p$-Minkowski problem]\label{prob q-weighted-horospherical p-Minkowski problem}
	Let $p$ and $q$ be two real numbers. 
	Given a positive function $f(z)$ defined on $\mathbb{S}^n$, what are necessary and sufficient conditions for $f$, such that there exists a smooth positive solution $\g(z)$ satisfying
	\begin{align*}
	\g^{-p}(z)\left(\frac{|D \g|^2}{2\g}(z) +\frac{\g(z)+\g^{-1}(z)}{2} \right)^{q-n} \det (A[\g(z)]) =f(z),
	\end{align*}
	such that $A[\g(z)]>0$ for all $z \in \mathbb{S}^n$.
\end{prob}

Note that when $q=n$, Problem \ref{prob q-weighted-horospherical p-Minkowski problem} reduces to the horospherical $p$-Minkowski problem; when $q=n+1$, Problem \ref{prob q-weighted-horospherical p-Minkowski problem} reduces to Problem 11.1 in \cite{LX}.

$\ $

Now we consider the \textit{isotropic horospherical $q$-weighted $p$-Minkowski problem in hyperbolic plane}, which corresponds to the equation
\begin{equation}\label{1.4}
    \varphi^{-p}\left(\frac{\varphi_{\theta}^2}{2\varphi}+\frac{\varphi+\varphi^{-1}}{2}\right)^{q-1}\left(\varphi_{\theta\theta}-\frac{\varphi_{\theta}^2}{2\varphi}+\frac{\varphi-\varphi^{-1}}{2}\right)=\gamma
    \quad \text{on}\ \mathbb{S}^1,
\end{equation}
where $\gamma$ is a positive constant. In this paper, we establish the following uniqueness result.
\begin{thm}\label{Thm1.3}
Suppose either
\begin{enumerate}[(i)]
    \item $p\ge -1,\ q\le 1$ and at least one of these inequalities is strict, or  
    \item $p< -1,\ q\ge 1$ and $q-p\le 8$.  
\end{enumerate}
Then the solution to (\ref{1.4}) must be constant, i.e. $\varphi(\theta)\equiv c$, where $c$ solves the equation
\begin{equation*}
    c^{-p}\left(\frac{c+c^{-1}}{2}\right)^{q-1}\left(\frac{c-c^{-1}}{2}\right)=\gamma.
\end{equation*}
\end{thm}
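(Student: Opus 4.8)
The plan is to integrate equation (\ref{1.4}) once to reduce it to a first-order autonomous equation, and then to analyze the resulting period function. First I would record the pointwise identity
\begin{equation*}
W_\theta=\frac{\varphi_\theta}{\varphi}\left(\varphi_{\theta\theta}-\frac{\varphi_\theta^2}{2\varphi}+\frac{\varphi-\varphi^{-1}}{2}\right),\qquad W:=\frac{\varphi_\theta^2}{2\varphi}+\frac{\varphi+\varphi^{-1}}{2},
\end{equation*}
which is a direct computation. Multiplying (\ref{1.4}) by $\varphi_\theta$ and using this identity turns the left-hand side into $\varphi^{1-p}W^{q-1}W_\theta$, so that (\ref{1.4}) becomes $W^{q-1}W_\theta=\gamma\varphi^{p-1}\varphi_\theta$. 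Integrating in $\theta$ yields the first integral
\begin{equation*}
\frac{\gamma}{p}\varphi^p-\frac{1}{q}W^q\equiv\text{const}
\end{equation*}
(with the obvious logarithmic modification when $p=0$ or $q=0$, both of which occur only in case (i)). Solving for $W$ as a function of $\varphi$ and recalling $W=\frac{\varphi_\theta^2}{2\varphi}+\frac{\varphi+\varphi^{-1}}{2}$ reduces (\ref{1.4}) to
\begin{equation*}
\varphi_\theta^2=2\varphi\,W(\varphi)-\varphi^2-1=:\Phi(\varphi).
\end{equation*}
Since $\gamma>0$ forces the equilibrium $c$ to satisfy $c>1$, every solution lives on a level set of this first integral in the $(\varphi,\varphi_\theta)$ phase plane.

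Next I would recast uniqueness as a statement about periods. The constant solution is the unique equilibrium (a center), and any nonconstant solution is a closed orbit oscillating between two consecutive zeros $\varphi_-<\varphi_+$ of $\Phi$; for it to descend to $\mathbb{S}^1$ its minimal $\theta$-period must equal $2\pi/j$ for some integer $j\ge1$, where
\begin{equation*}
\Theta=2\int_{\varphi_-}^{\varphi_+}\frac{d\varphi}{\sqrt{\Phi(\varphi)}}.
\end{equation*}
Thus (\ref{1.4}) admits only the constant solution precisely when no orbit has period $2\pi/j$. Linearizing (\ref{1.4}) at $\varphi\equiv c$ gives $v_{\theta\theta}+\mu v=0$ with
\begin{equation*}
\mu=\frac{-p\,F_0+q\,W_0-(q-1)W_0^{-1}}{W_0+F_0},\qquad W_0=\frac{c+c^{-1}}{2},\quad F_0=\frac{c-c^{-1}}{2},
\end{equation*}
so the infinitesimal period at the center is $2\pi/\sqrt{\mu}$ and small-amplitude nonconstant solutions bifurcate exactly when $\mu=j^2$. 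Using $W_0^2-F_0^2=1$ one checks that $\mu\to1$ as $\gamma\to0$ ($c\to1$) and $\mu\to\tfrac{q-p}{2}$ as $\gamma\to\infty$ ($c\to\infty$).

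These limits explain the hypotheses, and the sign of $\mu-1$ is controlled by $F_0\big[-(p+1)+(q-1)\tfrac{F_0}{W_0}\big]$. In case (i) the conditions $p\ge-1$, $q\le1$ make this quantity nonpositive, and requiring at least one inequality to be strict rules out the degenerate case $p=-1$, $q=1$ (where $\mu\equiv1$, giving the continuum of Theorem \ref{Thm1.1}(4)); one then gets $\mu<1$, i.e. the center period exceeds $2\pi$. In case (ii), $p<-1$ and $q\ge1$ give $\mu>1$, while an elementary case analysis (using $W_0^2-F_0^2=1$ and $q-p\le8$) yields $\mu<4$; here the threshold $q-p=8$ is exactly the value at which the limiting $\mu=\tfrac{q-p}{2}$ reaches the $j=2$ resonance $\mu=4$, matching the nonuniqueness border of Theorem \ref{Thm1.2} and, for $q=1$, the range $-7\le p<-1$. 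So at the center the period lies in the resonance-free interval $(2\pi,\infty)$ in case (i) and $(\pi,2\pi)$ in case (ii). To conclude, I would show that the period function $\Theta(E)$ stays in this interval for every orbit, so that $\Theta=2\pi/j$ is impossible for all $j\ge1$.

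The main obstacle is precisely this global control of $\Theta$: the linearization only excludes small-amplitude bifurcating solutions, and ruling out large-amplitude nonconstant solutions requires a monotonicity estimate for $\Theta(E)$ in the spirit of \cite{Ben03,LW23}. Establishing the correct sign of $\frac{d\Theta}{dE}$ (equivalently, that $\Theta$ remains on the favorable side of each resonant value $2\pi/j$) is the technical heart of the argument, and it is here that the sharp quantity $q-p$ and the number $8=2\cdot 2^2$ enter. The borderline $j=1$ case, which corresponds to the noncompact isometries of $\mathbb{H}^2$, must be treated separately and is exactly what the strictness built into the hypotheses is designed to handle.
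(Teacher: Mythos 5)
Your reduction is sound as far as it goes: the identity $W_\theta=\frac{\varphi_\theta}{\varphi}\bigl(\varphi_{\theta\theta}-\frac{\varphi_\theta^2}{2\varphi}+\frac{\varphi-\varphi^{-1}}{2}\bigr)$, the first integral $\frac{\gamma}{p}\varphi^p-\frac1q W^q=\mathrm{const}$ (which is exactly the paper's first integral after the substitution $u=\sqrt{\varphi(2\tau)}$), the linearized frequency $\mu$ and its limits $1$ and $\frac{q-p}{2}$ all check out, and they match the paper's limiting values (\ref{6.12}) and (\ref{6.16}). But the proof is incomplete precisely at the step you yourself flag: everything after the linearization only excludes small-amplitude bifurcating branches, and you defer the global control of the period to an unproven monotonicity of $\Theta$ in the energy $E$. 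That is a genuine gap, and it is also not how the paper closes it. The paper never proves $\frac{d\Theta}{dE}$ has a sign; instead it reparametrizes the period as $\Theta_{p,q}\{\alpha,r\}$ with $\alpha=\frac{1}{u_+u_-}$, $r=\frac{u_+}{u_-}$, and establishes \emph{uniform two-sided bounds} via monotonicity in the parameters $p$, $q$, $\alpha$ (Theorems \ref{Thm6.5}, \ref{Thm6.7}, \ref{Thm6.9}, resting on the elementary Lemmas \ref{Lem4.1} and \ref{Lem6.6} and the pointwise comparison of $F_{p,q}$), reducing to the boundary case $\alpha=0$, which is exactly the planar $L_p$ dual Minkowski period function of \cite{LW22} (Propositions \ref{Prop6.10}, \ref{Prop6.11}). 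This yields $\frac{\pi}{\sqrt{2q-2p}}<\Theta_{p,q}\{\alpha,r\}<\frac{\pi}{2}$ for $p<-1$, $q\ge1$, so under $q-p\le 8$ the period lies in $\left(\frac{\pi}{4},\frac{\pi}{2}\right)$, an interval containing no resonant value $\frac{\pi}{2m}$; your threshold heuristics ($8=2\cdot2^2$, the $j=2$ resonance) correctly predict why $q-p=8$ is the border, but without these parameter-monotonicity lemmas there is no proof.

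For case (i) the situation is worse than a deferred estimate: your period-function plan would have to exclude the $m=1$ resonance $\Theta=\frac{\pi}{2}$ for \emph{all} orbits, while by (\ref{6.15}) the period tends to exactly $\frac{\pi}{2}$ as $E\to+\infty$, so no linearization-plus-limits argument can decide this; moreover the whole phase-plane apparatus (convexity and coercivity of $\bar E_\gamma$, positivity of $F_{p,q}$, the monotonicity lemmas) is developed in the paper only for $p\le-1$, $q\ge1$ and does not transfer to $p\ge-1$, $q\le1$ (for $p>0$ the term $-\frac{2^q\gamma}{p}u^{2p}$ even destroys coercivity of the potential). The paper handles case (i) by a completely different and self-contained mechanism that your proposal misses: the Heintze--Karcher type inequality (\ref{6.3}) from \cite{LX}, combined with one integration by parts of $\int_{\mathbb{S}^1}\left(\frac{\varphi_\theta}{\varphi}\right)_\theta\varphi^{1+p}W^{1-q}\,d\theta$, shows this integral is simultaneously $\ge0$ and $\le0$ when $p\ge-1$, $q\le1$ with one inequality strict, forcing $\varphi_\theta\equiv0$ outright. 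If you want to complete your write-up, you should import that integral-inequality argument for (i) and replace your hoped-for sign of $\frac{d\Theta}{dE}$ by the parameter-monotonicity route for (ii).
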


On the other hand, we also prove the nonuniqueness of solutions to (\ref{1.4}).
\begin{thm}\label{Thm1.4}
Let $l\ge 1$ be an integer. If $p<-1,\ q\ge 1,\ q-p > 2(l+1)^2$ and $\gamma>\gamma_{p,q,l}$, where $\gamma_{p,q,l}$ is given by
\begin{align*}
    \gamma_{p,q,l}= 2^{-q} (u_{\gamma}^{2}+u_{\gamma}^{-2})^{q-1} (u_{\gamma}^{2-2p}-u_{\gamma}^{-2-2p}),
\end{align*}
where $u_{\gamma}\in(1,\infty)$ solves the equation
\begin{align}\label{1.5}
    (1-p)+(1+p)u_{\gamma}^{-4}+(q-1)\frac{(1-u_{\gamma}^{-4})^2}{1+u_{\gamma}^{-4}}=2(l+1)^2,
\end{align}
then there exist at least $(l+1)$ solutions to (\ref{1.4}). Furthermore, for any integer $2\le j\le l+1$, there exists a non-constant solution that corresponds to a h-convex curve $\Gamma_{j,p,q,\gamma}$ with $j$-fold symmetry in $\mathbb{H}^2$.
\end{thm}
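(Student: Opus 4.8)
The plan is to integrate (\ref{1.4}) once to an autonomous first-order system and then count closed orbits by a phase-plane/period-function argument, in the spirit of \cite{Ben03,LW23}. The starting point is the identity
\begin{equation*}
\frac{d}{d\theta}\cosh r=\frac{\varphi_\theta}{\varphi}\left(\varphi_{\theta\theta}-\frac{\varphi_\theta^2}{2\varphi}+\frac{\varphi-\varphi^{-1}}{2}\right),\qquad \cosh r=\frac{\varphi_\theta^2}{2\varphi}+\frac{\varphi+\varphi^{-1}}{2},
\end{equation*}
which one verifies by direct differentiation. Writing (\ref{1.4}) as $A[\varphi]=\gamma\varphi^{p}(\cosh r)^{1-q}$ (where $A[\varphi]$ is the bracket above) turns the right-hand side into $\gamma\varphi^{p-1}\varphi_\theta(\cosh r)^{1-q}$, so that $(\cosh r)^{q-1}\,d(\cosh r)=\gamma\varphi^{p-1}\,d\varphi$, and hence the quantity
\begin{equation*}
\mathcal E:=\frac1q(\cosh r)^{q}-\frac\gamma p\,\varphi^{p}
\end{equation*}
is conserved along solutions. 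This first integral drives the whole argument.

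In the phase plane $(\varphi,\varphi_\theta)$ the orbits are the level sets $\{\mathcal E=E\}$. At a turning point $\varphi_\theta=0$ one has $\cosh r=\tfrac12(\varphi+\varphi^{-1})$, so the turning points solve $W(\varphi)=E$ with the potential $W(\varphi)=\frac1q\big(\frac{\varphi+\varphi^{-1}}2\big)^{q}-\frac\gamma p\varphi^{p}$. Since $p<-1$ and $q\ge1$, $W$ blows up as $\varphi\to0^+$ and as $\varphi\to\infty$ and (routinely) has a single interior critical point at the constant solution $\varphi=c$; thus $W$ is a single well and every $E>W(c)$ gives a closed orbit on which $\varphi$ oscillates between roots $\varphi_-(E)<c<\varphi_+(E)$. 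Solving $\mathcal E=E$ yields $\varphi_\theta^2=2\varphi\,\Phi(\varphi)-\varphi^2-1$ with $\Phi(\varphi)=[q(E+\tfrac\gamma p\varphi^{p})]^{1/q}$, so the period is
\begin{equation*}
T(E)=2\int_{\varphi_-(E)}^{\varphi_+(E)}\frac{d\varphi}{\sqrt{2\varphi\,\Phi(\varphi)-\varphi^2-1}}.
\end{equation*}
A solution of (\ref{1.4}) with $j$-fold symmetry is exactly a closed orbit of period $2\pi/j$, so the theorem reduces to showing that $T$ attains $2\pi/j$ for each $j=2,\dots,l+1$.

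To locate these values I would analyze the two ends of the period annulus. Linearizing (\ref{1.4}) about $\varphi\equiv c$ gives $\psi_{\theta\theta}+\lambda\psi=0$ with $2\lambda=(1-p)+(1+p)c^{-2}+(q-1)\frac{(c^{2}-1)^{2}}{c^{2}(c^{2}+1)}$, so $T(E)\to2\pi/\sqrt\lambda$ as $E\to W(c)^+$. Substituting $c=u_\gamma^{2}$ turns $2\lambda=2(l+1)^2$ into precisely the defining relation (\ref{1.5}) for $u_\gamma$, whose corresponding constant value of $\gamma$ is exactly $\gamma_{p,q,l}$; since $\lambda$ is increasing in $c$ and $c$ is increasing in $\gamma$ (both routine, using $q\ge1$, $p<-1$), the hypotheses $q-p>2(l+1)^2$ and $\gamma>\gamma_{p,q,l}$ are equivalent to $\lambda>(l+1)^2$, i.e. $2\pi/\sqrt\lambda<2\pi/(l+1)$. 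At the other end, rescaling $\varphi=2M\eta$ with $M=(qE)^{1/q}\to\infty$ shows $\Phi(\varphi)\to M$ on the bulk of the orbit and $2\varphi\Phi-\varphi^2-1\approx4M^2\eta(1-\eta)$, so that
\begin{equation*}
\lim_{E\to\infty}T(E)=2\int_0^1\frac{d\eta}{\sqrt{\eta(1-\eta)}}=2\pi.
\end{equation*}
As $T$ is continuous on $(W(c),\infty)$ with limits $2\pi/\sqrt\lambda<2\pi/(l+1)$ and $2\pi$, and every target $2\pi/j$ with $2\le j\le l+1$ lies strictly between them, the intermediate value theorem produces $E_j$ with $T(E_j)=2\pi/j$. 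The resulting orbit is a non-constant $j$-fold symmetric solution; it is automatically h-convex because $A[\varphi]=\gamma\varphi^{p}(\cosh r)^{1-q}>0$, and it stays positive between its turning points. Together with the constant solution this yields at least $l+1$ distinct solutions.

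The main obstacle is the rigorous asymptotics of the singular period integral, above all the claim $\lim_{E\to\infty}T(E)=2\pi$. The approximation $\Phi\approx M$ breaks down in a shrinking neighborhood of the inner turning point $\varphi_-$, where for $p<-1$ the term $\frac\gamma p\varphi^{p}$ becomes comparable to $E$; I would therefore need a uniform estimate showing this region contributes negligibly (its width in $\eta$ is $o(1)$ while the integrand is only $O(\eta^{-1/2})$), together with a dominated-convergence argument handling the square-root singularities at both turning points. Establishing the small-amplitude limit $2\pi/\sqrt\lambda$ and the continuity (and, if one wants each branch to be unique, the monotonicity) of $T(E)$ is comparatively standard, but the uniform control of the period integral as the orbit fills the entire well is the delicate point, and it is exactly here that the method of \cite{Ben03,LW23} must be adapted to the present weight $(\cosh r)^{q-1}$.
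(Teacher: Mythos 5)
Your proposal takes essentially the same route as the paper: your conserved quantity $\mathcal{E}=\frac{1}{q}(\cosh r)^{q}-\frac{\gamma}{p}\varphi^{p}$ is exactly the paper's first integral of (\ref{6.4}) after the substitution $u=\sqrt{\varphi(2\tau)}$ (up to the factor $2^{q}$), your period $T(E)$ equals $4\,\Theta_{p,q}(\gamma,E)$, and your endpoint limits $2\pi/\sqrt{\lambda}$ and $2\pi$ combined with the intermediate value theorem are precisely (\ref{6.16}), (\ref{6.15}) and the paper's concluding argument, with your monotonicity of $\lambda$ in $c$ and of $c$ in $\gamma$ matching the Remark following Theorem \ref{Thm1.4}. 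The two technical points you flag are supplied in the paper by the Perdomo-type Lemma \ref{Lem6.4} (the small-amplitude limit at $E\to(\bar{E}_{\gamma}^{*})^{+}$) and by the explicit change of variables in the proofs of Theorems \ref{Thm3.1} and \ref{Thm6.3} (the $E\to+\infty$ limit), so your outline is correct and essentially identical in method.
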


\begin{figure}[htbp]
\centering
\begin{tikzpicture}[scale=0.5][>=Stealth] 

\draw[->](-8.3,0)--(8.3,0) node[right]{{\footnotesize{$p$-axis}} };
\draw[->](0,-2.3)--(0,8.3) node[above]{{\footnotesize{$q$-axis}} };
\foreach \x in {-8, -7, -6, -5, -4, -3, -2, -1, 1, 2, 3, 4, 5, 6, 7, 8 } \draw (\x, 1pt) -- (\x, -1pt) node[anchor=north] {{\footnotesize{$\x$}} };
\foreach \y in { -2, -1, 1, 2, 3, 4, 5, 6, 7, 8} \draw (1pt, \y) -- (-1pt, \y) node[anchor=east] {{\footnotesize{$\y$}} };
\node[below right] at (0,0) {{\footnotesize{$0$}} };
\draw [densely dashed] (-8,1)--(8,1);
\draw [densely dashed] (-1,-2)--(-1,8);
\draw [densely dashed] (-8,-0)--(0.5,8.5);
\filldraw[opacity=0.6, draw=black!20, fill=black!20] (-1,1)--(8,1)--(8,-2.2)--(-1,-2.2)--cycle;
\node[right] at (8,1) {{\footnotesize{$q=1$}}};
\filldraw[opacity=0.6, draw=black!20, fill=black!20] (-1,1)--(-1,7)--(-7,1)--cycle;
\node[below] at (-1,-2) {{\footnotesize{$p=-1$}}};
\filldraw[opacity=0.8, draw=black!20, fill=black!90] (-1,7)--(-1,8)--(-8,8)--(-8,1)--(-7,1)--cycle;
\node[below right] at (0.5,8.5) {{\footnotesize{$q-p=8$}}};
\end{tikzpicture}
\caption{Whether or not there exist non-constant solutions to (\ref{1.4})}
\caption*{Grey domain: Non-existence for $\gamma>0$. Black domain: Existence for $\gamma>\gamma_{p,q,1}$. }
\label{fig-2}
\end{figure}

\begin{rem}
Let $\psi(u_{\gamma})$ denote the left hand side of (\ref{1.5}). Since $p<-1$ and $q\ge 1$, then $\psi$ is strictly increasing with respect to $u_\gamma$. Note that $\psi$ tends to $2<2(l+1)^2$ as $u_{\gamma}$ tends to 1, and also tends to $q-p>2(l+1)^2$ as $u_{\gamma}$ tends to infinity.  Therefore, there exists a unique solution $u_{\gamma}$ to (\ref{1.5}), and $\gamma_{p,q,l}$ is well-defined.
\end{rem}

The paper is organized as follows. In Section \ref{sec:2}, we construct a period function $\Theta_{p}(\gamma,E)$ which is closely related to the solution to (\ref{1.1}), and give two alternative expressions $\Theta_{p}[u_{-},u_{+}]$ and $\Theta_{p}\{\alpha,r\}$. In Section \ref{sec:3}, we study special values and asymptotic behaviours of $\Theta_{p}$. In Section \ref{sec:4}, we prove that when $p\le -1$, then $\Theta_p\{\alpha,r\}$ is monotone increasing in $p$ and in $\alpha$, respectively. In Section \ref{sec:5}, we prove Theorem \ref{Thm1.2}. In Section \ref{sec:6}, by using the same method to the equation (\ref{1.4}), we give the proofs of Theorem \ref{Thm1.3} and Theorem \ref{Thm1.4}.

\begin{ack}
This work was supported by NSFC Grant No.11831005.
\end{ack}


\section{The period function}
\label{sec:2}

Let $\varphi(\theta)$ be a smooth positive solution to the isotropic horospherical $p$-Minkowski problem in hyperbolic plane, that is, $\varphi(\theta)$ satisfies
\begin{equation}\label{2.1}
    \varphi^{-p}\left(\varphi_{\theta\theta}-\frac{\varphi_{\theta}^2}{2\varphi}+\frac{\varphi-\varphi^{-1}}{2}\right)=\gamma,
\end{equation}
where $\gamma$ is a positive constant. Due to Theorem \ref{Thm1.1}, we only consider the case where $p\le -1$ and $u(\tau)$ is not constant from now on.

Define $u(\tau)=\sqrt{\varphi(2\tau)}$, then (\ref{2.1}) is equivalent to
\begin{equation}\label{2.2}
    u^{1-2p}(u_{\tau\tau}+u-u^{-3})=2\gamma.
\end{equation}
Since $p<0$, then (\ref{2.2}) can be rewritten as 
\begin{equation*}
\begin{aligned}
     &  (u_{\tau}^2+u^2+u^{-2})_{\tau}=\frac{2\gamma}{p}(u^{2p})_{\tau}.
\end{aligned}
\end{equation*}
Thus a first integral of (\ref{2.2}) is given by
\begin{equation}\label{2.3}
\begin{aligned}
    &  u_{\tau}^2+u^2+u^{-2}-\frac{2\gamma}{p}u^{2p}=E,
\end{aligned}
\end{equation}
for some constant $E$.

Since the solution $\varphi$ to (\ref{2.1}) is a $2\pi$-periodic positive function of $\theta$, then the corresponding solution $u$ to (\ref{2.2}) is a $\pi$-periodic positive function of $\tau$, and its maximum $u_{+}$ and minimum values $u_{-}$ are determined by $p,\gamma$ and $E$. Define a function
$$E_{\gamma}(u)=u^2+u^{-2}-\frac{2\gamma}{p}u^{2p},$$
then it follows from (\ref{2.3}) that $E=E_{\gamma}(u_{\pm})$. Note that
$$E_{\gamma}'(u)=2u-2u^{-3}-4\gamma u^{2p-1},$$
and
$$E_{\gamma}''(u)=2+6u^{-4}-4\gamma(2p-1) u^{2p-2}>0,$$
thus $E_{\gamma}(u)$ is convex, and decreasing for $u\in (0, u_{\gamma})$ and increasing for $u\in (u_{\gamma},+\infty)$, where $u_{\gamma}$ satisfies $E_{\gamma}'(u_{\gamma})=0$. Moreover, $E_{\gamma}(u)$ has the minimum value $E_{\gamma}^{*}=E_{\gamma}(u_\gamma)$, and $E_{\gamma}(u)$ tends to infinity as $u$ tends to zero or infinity. Consequently, we obtain

\begin{lem}\label{Lem 2.1}
Let $u_{+}>u_{-}$ be the two positive roots of the equation $E_{\gamma}(u)=E$, where $E>E_{\gamma}^{*}$. Then $u_{+}$ increases to infinity and $u_{-}$ decreases to zero as $E$ increases to infinity.

Furthermore, if $\gamma_1>\gamma_2>0$ and $E>E_{\gamma_1}^*,E> E_{\gamma_2}^*$, denote $u_{+}^{(i)},u_{-}^{(i)}$ by the two roots of the equation $E_{\gamma_i}(u)=E$ for $i=1,2$, then $u_{+}^{(2)}>u_{+}^{(1)}>u_{-}^{(1)}>u_{-}^{(2)}>0$.
\end{lem}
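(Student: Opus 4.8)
The plan is to exploit the strict convexity of $E_{\gamma}$ already recorded before the statement, together with a monotonicity of $E_{\gamma}$ in the parameter $\gamma$. For the first assertion I would note that the restriction of $E_{\gamma}$ to the branch $[u_{\gamma},+\infty)$ is a continuous, strictly increasing bijection onto $[E_{\gamma}^{*},+\infty)$, and likewise its restriction to $(0,u_{\gamma}]$ is a continuous, strictly decreasing bijection onto $[E_{\gamma}^{*},+\infty)$. Consequently, for each $E>E_{\gamma}^{*}$ the larger root $u_{+}\ge u_{\gamma}$ and the smaller root $u_{-}\le u_{\gamma}$ are obtained by inverting these two branches, so $u_{+}$ is strictly increasing in $E$ while $u_{-}$ is strictly decreasing in $E$. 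To conclude, I would argue that $u_{+}$ cannot stay bounded: since $u_{+}\ge u_{\gamma}>0$ and $E_{\gamma}$ is bounded on every compact subset of $(0,\infty)$, a bound $u_{+}\le M$ would force $E=E_{\gamma}(u_{+})\le\max_{[u_{\gamma},M]}E_{\gamma}<\infty$, contradicting $E\to\infty$; hence $u_{+}\to\infty$. The same boundedness argument applied to $u_{-}\le u_{\gamma}$ shows that its limit $L$ must satisfy $L=0$, for otherwise $E=E_{\gamma}(u_{-})$ would remain bounded on $[L,u_{\gamma}]$.

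For the second assertion, the key observation is that
\[
\frac{\partial E_{\gamma}}{\partial\gamma}(u)=-\frac{2}{p}u^{2p}>0
\]
for all $u>0$, because $p\le -1<0$. Hence $\gamma_{1}>\gamma_{2}$ gives the pointwise strict inequality $E_{\gamma_{1}}(u)>E_{\gamma_{2}}(u)$ on $(0,\infty)$. I would then phrase the comparison in terms of sublevel sets: since each $E_{\gamma_{i}}$ is strictly convex and attains the value $E$ exactly at its two roots, the sublevel set $\{u>0 : E_{\gamma_{i}}(u)\le E\}$ is precisely the closed interval $[u_{-}^{(i)},u_{+}^{(i)}]$. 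The pointwise domination $E_{\gamma_{2}}<E_{\gamma_{1}}$ immediately yields the inclusion $[u_{-}^{(1)},u_{+}^{(1)}]\subseteq[u_{-}^{(2)},u_{+}^{(2)}]$, and therefore $u_{-}^{(2)}\le u_{-}^{(1)}$ and $u_{+}^{(1)}\le u_{+}^{(2)}$; the middle inequality $u_{-}^{(1)}<u_{+}^{(1)}$ and the positivity $u_{-}^{(2)}>0$ are automatic, so the full chain $u_{+}^{(2)}\ge u_{+}^{(1)}>u_{-}^{(1)}\ge u_{-}^{(2)}>0$ follows.

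The remaining point, upgrading the two outer inequalities to strict ones, is the only place requiring care, and I expect it to be routine rather than a genuine obstacle. Evaluating the strict pointwise domination at an endpoint gives $E_{\gamma_{2}}(u_{+}^{(1)})<E_{\gamma_{1}}(u_{+}^{(1)})=E$, so $u_{+}^{(1)}$ lies in the open set $\{E_{\gamma_{2}}<E\}=(u_{-}^{(2)},u_{+}^{(2)})$ and is thus not the endpoint $u_{+}^{(2)}$; combined with $u_{+}^{(1)}\le u_{+}^{(2)}$ this forces $u_{+}^{(1)}<u_{+}^{(2)}$. The identical argument at $u_{-}^{(1)}$ gives $u_{-}^{(2)}<u_{-}^{(1)}$, completing the strict chain claimed in the lemma. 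Once strict convexity and the sign of $\partial_{\gamma}E_{\gamma}$ are in hand, every step is a short monotonicity or inclusion argument, and the only thing to watch is that the strictness of $E_{\gamma_{1}}>E_{\gamma_{2}}$ is invoked precisely at the endpoints $u_{\pm}^{(1)}$.
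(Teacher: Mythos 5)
Your proof is correct and takes essentially the same route as the paper, which offers no separate argument but deduces the lemma directly (``Consequently, we obtain'') from the facts established just before it: $E_{\gamma}$ is strictly convex, decreasing on $(0,u_{\gamma})$ and increasing on $(u_{\gamma},+\infty)$, with minimum $E_{\gamma}^{*}$ and $E_{\gamma}(u)\to\infty$ as $u\to 0$ or $u\to\infty$. Your write-up simply makes the implicit details explicit, including the one-line observation $\frac{\partial E_{\gamma}}{\partial\gamma}(u)=-\frac{2}{p}u^{2p}>0$ for $p<0$, which is exactly the intended mechanism behind the second assertion.
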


Let $\tau_{-},\tau_{+}\in [0,\pi)$ be two consecutive critical points such that $u(\tau_{\pm})=u_{\pm}$. Without loss of generality, we assume $\tau_{+}>\tau_{-}$. Using (\ref{2.3}), we have 
\begin{equation*}
    u_{\tau}=\sqrt{E+\frac{2\gamma}{p}u^{2p}-u^2-u^{-2}}>0
\end{equation*}
for $\tau\in (\tau_{-},\tau_{+})$. Then
\begin{equation}\label{2.4}
    \tau_{+}-\tau_{-}=\int_{\tau_{-}}^{\tau_{+}}d\tau
    =\int_{u_{-}}^{u_{+}}\frac{du}{u_{\tau}}.
\end{equation}
Thus we consider the period function
\begin{equation}\label{2.5}
    \Theta_p(\gamma,E):=\int_{u_{-}}^{u_{+}}\frac{du}{\sqrt{E+\frac{2\gamma}{p}u^{2p} -u^2-u^{-2}}},
\end{equation}
where $u_{-}$ and $u_{+}$ satisfy $E_{\gamma}(u_{-})=E_{\gamma}(u_{+})=E$.

If $u(\tau)$ is a $\pi$-periodic non-constant solution to (\ref{2.2}), then it must have an even number of critical points in $[0,\pi)$. It follows from (\ref{2.4}) and (\ref{2.5}) that there exists a positive integer $m$ such that $\Theta_p(\gamma,E)=\frac{\pi}{2m}$. We obtain

\begin{lem}\label{Lem 2.2}
\begin{enumerate}[(i)]
    \item If $\frac{\pi}{4}<\Theta_p(\gamma,E)<\frac{\pi}{2}$ for any $E>E_{\gamma}^*$, then the solution to (\ref{2.1}) must be constant. Moreover, the solution to (\ref{2.1}) is unique.
    
    \item If $E>E_{\gamma}^*$ and $\Theta_p(\gamma,E)=\frac{\pi}{2m}$ for some integer $m$, then there exists a $\frac{2\pi}{m}$-periodic non-constant solution to (\ref{2.1}).
\end{enumerate}
\end{lem}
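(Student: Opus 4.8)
The plan is to analyze the relationship between the period function $\Theta_p(\gamma,E)$ and the number of critical points of a periodic solution $u(\tau)$ to (\ref{2.2}). The key observation is that any non-constant $\pi$-periodic solution $u$ oscillates between its minimum $u_-$ and maximum $u_+$, and by the first integral (\ref{2.3}) the time to travel monotonically from $u_-$ up to $u_+$ equals exactly $\Theta_p(\gamma,E)$, as recorded in (\ref{2.4})--(\ref{2.5}). By symmetry of the autonomous second-order equation, the time to descend from $u_+$ back to $u_-$ is the same, so one full oscillation of $u$ consumes $\tau$-length $2\Theta_p(\gamma,E)$. Since $u$ must complete an integer number of full oscillations over one period of length $\pi$, this forces $\pi = 2m\,\Theta_p(\gamma,E)$ for some positive integer $m$, i.e. $\Theta_p(\gamma,E) = \tfrac{\pi}{2m}$.

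For part (i), I would argue by contradiction. Suppose the hypothesis $\tfrac{\pi}{4} < \Theta_p(\gamma,E) < \tfrac{\pi}{2}$ holds for every admissible energy $E > E_\gamma^*$, yet a non-constant solution exists. Then the quantization identity above gives $\Theta_p(\gamma,E) = \tfrac{\pi}{2m}$ for some positive integer $m$. But $m=1$ would give $\Theta_p = \tfrac{\pi}{2}$ and $m\ge 2$ would give $\Theta_p \le \tfrac{\pi}{4}$, both of which violate the strict two-sided bound. Hence no non-constant periodic solution can exist, and every solution to (\ref{2.1}) reduces to a constant. Uniqueness of the constant solution then follows because a constant $\varphi\equiv c$ forces $\tfrac{c-c^{-1}}{2}c^{-p} = \gamma$, and the map $c\mapsto c^{-p}\tfrac{c-c^{-1}}{2}$ is (for the relevant range) monotone, so at most one such $c$ exists; I would invoke the analysis of the algebraic equation (\ref{1.2}) already used in Theorem \ref{Thm1.1}.

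For part (ii), the direction is reversed and the construction is explicit. Given $E > E_\gamma^*$ with $\Theta_p(\gamma,E) = \tfrac{\pi}{2m}$, I would build the solution $u$ by prescribing $u(\tau_-)=u_-$ and integrating the first-order ODE $u_\tau = \sqrt{E + \tfrac{2\gamma}{p}u^{2p} - u^2 - u^{-2}}$ on the ascending arc $[\tau_-,\tau_+]$, where $\tau_+-\tau_- = \Theta_p(\gamma,E)$, then reflecting across $\tau_+$ to obtain the descending arc and extending periodically. Smoothness at the turning points $u_\pm$ is the one delicate point: although the integrand under the square root vanishes there, the function $u(\tau)$ extends smoothly because $E_\gamma'(u_\pm)\neq 0$ when $E>E_\gamma^*$ (so $u_\pm$ are simple roots), giving $u_{\tau\tau}\neq 0$ at the critical points and a genuine local maximum/minimum rather than a cusp. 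The resulting $u$ has period $\tfrac{\pi}{m}$ in $\tau$, hence $\varphi(\theta)=u(\theta/2)^2$ has period $\tfrac{2\pi}{m}$ and is a non-constant solution to (\ref{2.1}).

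The main obstacle I anticipate is establishing the smoothness (and in particular the matching of derivatives) of $u$ at the turning points $u_-$ and $u_+$ in part (ii), since this is where the square-root integrand degenerates; the cleanest route is to note that $u_\pm$ are simple zeros of $E - E_\gamma(u)$ (because $E > E_\gamma^*$ places us strictly above the minimum of the convex function $E_\gamma$, so $E_\gamma'(u_\pm)\neq 0$), which guarantees that the inverse function $\tau(u)$ has a square-root branch point that patches together into a smooth even extension, exactly as in the classical theory of period functions for conservative systems. The quantization argument itself in both parts is a direct consequence of the autonomous structure and requires only the symmetry of $\Theta_p$ under reflection, so I expect that to be routine once the setup in (\ref{2.4})--(\ref{2.5}) is in hand.
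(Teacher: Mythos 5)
Your proposal is correct and takes essentially the same route as the paper: the paper's entire argument is the short paragraph preceding the lemma, where the even number of critical points of a $\pi$-periodic non-constant solution together with (\ref{2.4})--(\ref{2.5}) yields the same quantization $\Theta_p(\gamma,E)=\frac{\pi}{2m}$, from which both parts of Lemma \ref{Lem 2.2} follow at once. The details you supply beyond this---ruling out $m=1$ and $m\ge 2$ by the two-sided bound, uniqueness of the constant via monotonicity of (\ref{1.2}) on $(1,\infty)$, and the smooth matching at the simple turning points $u_\pm$ (where $E_\gamma'(u_\pm)\neq 0$ since $E>E_\gamma^*$) in the construction for part (ii)---are precisely the steps the paper leaves implicit, and they are all sound.
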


For convenience, we provide two new expressions of $\Theta_p$. First, using $E_{\gamma}(u_{-})=E_{\gamma}(u_{+})=E$, we get
\begin{equation}\label{2.6}
    -\frac{2\gamma}{p}=\frac{(u_{+}^2-u_{-}^2)(u_{+}^{-2} u_{-}^{-2}-1)}{u_{+}^{2p}- u_{-}^{2p}},
\end{equation}
and
\begin{equation}\label{2.7}
    E=\frac{u_{+}^{2p}(u_{-}^2+u_{-}^{-2})-u_{-}^{2p}(u_{+}^2+u_{+}^{-2})}{u_{+}^{2p}- u_{-}^{2p}}.
\end{equation}
It follows from $p\le -1$ and (\ref{2.6}) that $\gamma>0$ if and only if $u_{+}u_{-}>1$. We can then consider $\Theta_p$ as a function of $u_{-}$ and $u_{+}$
\begin{equation}\label{2.8}
    \Theta_p [u_{-},u_{+}]:=\Theta_p(\gamma(u_{-},u_{+}),E(u_{-},u_{+}))
    =\int_{u_{-}}^{u_{+}}\frac{du}{\sqrt{G_p(u,u_{-},u_{+})}},
\end{equation}
where
\begin{equation}\label{2.9}
\begin{split}
    G_p(u,u_{-},u_{+})
    =\frac{u_{-}^{2p}-u^{2p}}{u_{-}^{2p}-u_{+}^{2p}}(u_{+}^2+u_{+}^{-2})
    +\frac{u^{2p}-u_{+}^{2p}}{u_{-}^{2p}-u_{+}^{2p}}(u_{-}^2+u_{-}^{-2})
    -u^2-u^{-2}
\end{split}
\end{equation}
for $(u_{-},u_{+})$ satisfying $u_{+}>u_{-}>0$ and $u_{+}u_{-}>1$.

Next, let $r=\frac{u_{+}}{u_{-}}\in(1,\infty)$ and $\alpha=\frac{1}{u_{+}u_{-}}\in (0,1)$, and make the change of variable $s=\frac{u}{u_{-}}$. We define
\begin{equation}\label{2.10}
    \Theta_{p}\{\alpha,r\}:=\Theta_p[\alpha^{-\frac{1}{2}}r^{-\frac{1}{2}},\alpha^{-\frac{1}{2}}r^{\frac{1}{2}}]
    =\int_{1}^r \frac{ds}{\sqrt{F_p(s,\alpha,r)}},
\end{equation}
where
\begin{equation}\label{2.11}
\begin{split}
    F_p(s,\alpha,r)
    &=\frac{1-s^{2p}}{1-r^{2p}}(r^2+\alpha^{2})
    +\frac{s^{2p}-r^{2p}}{1-r^{2p}}(1+\alpha^2 r^2)
    -s^2-\alpha^2 r^2 s^{-2}\\
    &=\frac{1-s^{2p}}{1-r^{2p}}(r^2-1)-(s^2-1)
    -\alpha^2\left[\frac{1-s^{2p}}{1-r^{2p}}(r^{2}-1)-r^2(1- s^{-2})\right]
\end{split}
\end{equation}
for $(\alpha,r)$  satisfying $0<\alpha<1<r$.

In the following sections, we will study the asymptotic behavior and monotonicity of the period function $\Theta_p$.


\section{Special values and asymptotic behaviors}
\label{sec:3}

Let $p\le -1$. Denote $D_1=\{(\gamma,E):\ \gamma>0,\ E>E_{\gamma}^{*}\}$ and $D_2=\{(\alpha,r):\ 0<\alpha<1<r\}$.

\begin{thm}\label{Thm3.1}
$\Theta_p(\cdot,\cdot)$ and $\Theta_p\{\cdot,\cdot\}$ are continuous on $D_1$ and $D_2$, respectively. $\Theta_p$ is continuous with respect to $p$, and has following limiting values:
\begin{align}
    & \label{3.1}
    \lim\limits_{p\to -\infty}\Theta_p\{\alpha,r\}=\arccos\sqrt{\frac{1-\alpha^2}{r^2-\alpha^2}},\\
    & \label{3.2}
    \lim\limits_{r\to 1}\Theta_p\{\alpha,r\}=\frac{\pi}{\sqrt{(2-2p)+(2+2p)\alpha^2}},\\
    & \label{3.3}
    \lim\limits_{r\to +\infty}\Theta_p\{\alpha,r\}=\frac{\pi}{2},\\
    & \label{3.4}
    \lim\limits_{\gamma\to 0}\Theta_p(\gamma,E)=\frac{\pi}{2},\\
    & \label{3.5}
    \lim\limits_{E\to +\infty}\Theta_p(\gamma,E)=\frac{\pi}{2},\\
    & \label{3.6}
    \lim\limits_{E\to (E_{\gamma}^*)^{+}}\Theta_p(\gamma,E)=\frac{\pi}{\sqrt{(2-2p)+(2+2p)u_{\gamma}^{-4}}},
\end{align}
where $u_{\gamma}\in(1,+\infty)$ is given by
\begin{equation}\label{3.7}
    u_{\gamma}^{2-2p}-u_{\gamma}^{-2-2p}=2\gamma.
\end{equation}

Moreover, we also have
\begin{equation}\label{3.8}
    \Theta_{-1}(\gamma,E)=\frac{\pi}{2}, \quad\quad (\gamma,E)\in D_1,
\end{equation}
\begin{equation}\label{3.9}
    \Theta_{p}\{1,r\}=\frac{\pi}{2}, \quad \quad\quad\quad r>1.
\end{equation}
\end{thm}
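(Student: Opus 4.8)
The plan is to reduce everything to the two integral representations (\ref{2.5}) and (\ref{2.10}), and to exploit that in each the integrand has only square-root singularities at the two endpoints, which are the simple zeros of the radicand. The first task is to verify this structure: at $s=1$ and $s=r$ the function $F_p$ in (\ref{2.11}) vanishes, and since $F_p>0$ on $(1,r)$ (it equals $u_\tau^2$), the zeros are simple provided $\partial_s F_p(1)>0>\partial_s F_p(r)$, which I would confirm from the explicit derivative. Then I can factor $F_p(s,\alpha,r)=(s-1)(r-s)H_p(s,\alpha,r)$ with $H_p$ positive and jointly continuous, and apply the substitution $s=\frac{1+r}{2}+\frac{r-1}{2}\cos\psi$ to rewrite $\Theta_p\{\alpha,r\}=\int_0^\pi \frac{d\psi}{\sqrt{H_p}}$ with a bounded continuous integrand. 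Joint continuity in $(\alpha,r)$ and in $p$ then follows from continuity of the parametrized integral; the same desingularization applied to (\ref{2.5}) gives continuity on $D_1$.

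The special values (\ref{3.8}) and (\ref{3.9}) are direct computations. For (\ref{3.9}) the $p$-dependent terms in (\ref{2.11}) cancel when $\alpha=1$, leaving $F_p(s,1,r)=(s^2-1)(r^2-s^2)/s^2$; the substitution $w=s^2$ turns $\Theta_p\{1,r\}$ into $\tfrac12\int_1^{r^2}\frac{dw}{\sqrt{(w-1)(r^2-w)}}=\tfrac\pi2$. For (\ref{3.8}) the first integral (\ref{2.3}) at $p=-1$ reads $u_\tau^2+u^2+(1+2\gamma)u^{-2}=E$, and again $w=u^2$ reduces $\Theta_{-1}$ to $\tfrac12\int_{u_-^2}^{u_+^2}\frac{dw}{\sqrt{(w-u_-^2)(u_+^2-w)}}=\tfrac\pi2$. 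Both use the elementary identity $\int_a^b\frac{dw}{\sqrt{(w-a)(b-w)}}=\pi$.

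For the asymptotics I would proceed limit by limit, in each case identifying the pointwise limit of the radicand and justifying passage to the limit by dominated convergence after desingularizing. For (\ref{3.2}), as $r\to1$ the two endpoints merge, so I Taylor-expand: $F_p(1+(r-1)t)\sim (r-1)^2 t(1-t)\,C_0$ with $C_0=-\tfrac12\partial_s^2F_p(1)\big|_{r=1}$; computing this second derivative (using $\frac{r^2-1}{1-r^{2p}}\to-\tfrac1p$) gives $C_0=(2-2p)+(2+2p)\alpha^2$ and hence $\Theta_p\to\pi/\sqrt{C_0}$. Limit (\ref{3.6}) is the same small-oscillation limit around the equilibrium $u_\gamma$: as $E\to(E_\gamma^*)^+$ one has $u_\pm\to u_\gamma$, so $(\alpha,r)\to(u_\gamma^{-2},1)$, and either invoking (\ref{3.2}) or computing $E_\gamma''(u_\gamma)=(4-4p)+(4+4p)u_\gamma^{-4}$ after eliminating $\gamma$ through $E_\gamma'(u_\gamma)=0$ (which is exactly (\ref{3.7})) yields the stated value. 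For (\ref{3.1}) I let $p\to-\infty$: for fixed $s>1$ the ratio $\frac{1-s^{2p}}{1-r^{2p}}\to1$ and $F_p\to F_\infty=(r^2-s^2)(s^2-\alpha^2)/s^2$; the substitution $w=s^2$ evaluates $\int_1^r F_\infty^{-1/2}ds=\tfrac12\big[\tfrac\pi2-\arcsin\tfrac{2-\alpha^2-r^2}{r^2-\alpha^2}\big]$, which equals $\arccos\sqrt{(1-\alpha^2)/(r^2-\alpha^2)}$ via the double-angle identity. For (\ref{3.3}) and (\ref{3.5}) the domain becomes unbounded; I rescale by $s=r\xi$ (resp. $u=\sqrt{E}\,v$), under which the radicand tends to $r^2(1-\xi^2)$ (resp. $E(1-v^2)$) on $(0,1)$, so that $\Theta_p\to\int_0^1(1-\xi^2)^{-1/2}d\xi=\tfrac\pi2$. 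Finally (\ref{3.4}) follows most cleanly from (\ref{3.9}): as $\gamma\to0$ with $E$ fixed one has $u_+u_-\to1$ by (\ref{2.6}), i.e. $\alpha\to1$ while $r$ stays finite, so continuity together with (\ref{3.9}) gives $\Theta_p\to\tfrac\pi2$.

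The main obstacle throughout is analytic rather than algebraic: justifying the interchange of limit and integral across the integrable square-root singularities. The most delicate case is (\ref{3.1}), where the convergence $F_p\to F_\infty$ is non-uniform near $s=1$ (there $s^{2p}$ stays close to $1$ no matter how large $|p|$), creating a boundary layer; the key is to produce, after the desingularizing substitution, a uniform integrable dominating function so that the layer contributes negligibly. The rescaling limits (\ref{3.3}) and (\ref{3.5}) require analogous control near the moving upper endpoint and near the vanishing lower endpoint, where the $u^{-2}$ and $u^{2p}$ terms must be shown not to obstruct the limit.
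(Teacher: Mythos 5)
Your proposal is correct, and its skeleton coincides with the paper's: explicit evaluation of the special values \eqref{3.8}--\eqref{3.9}, a quadratic (small-oscillation) expansion for \eqref{3.2} and \eqref{3.6}, pointwise limits for \eqref{3.1}, and rescaling arguments for \eqref{3.3} and \eqref{3.5}; all of your computed constants check out ($-\tfrac12\partial_s^2 F_p \to (2-2p)+(2+2p)\alpha^2$ using $\tfrac{r^2-1}{1-r^{2p}}\to -\tfrac1p$; $E_\gamma''(u_\gamma)=(4-4p)+(4+4p)u_\gamma^{-4}$ after eliminating $\gamma$ via \eqref{3.7}; and the $\arccos$ evaluation via the double-angle identity). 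The genuine deviations are in how the two delicate limits are certified. For \eqref{3.6} the paper does not argue via joint continuity at the degenerate corner: it invokes a small-oscillation lemma of Perdomo (Lemma \ref{Lem3.2}), proved by squeezing $f+c$ between quadratics $b(u_2(c)-u)(u+u_2(c)-2u_0)$ with $b$ on either side of $a$, applied to $f(u)=E_\gamma^*+\tfrac{2\gamma}{p}u^{2p}-u^2-u^{-2}$. Your alternative--deduce \eqref{3.6} from \eqref{3.2} since $(\alpha,r)\to(u_\gamma^{-2},1)$--is legitimate (the paper itself remarks, in the weighted setting of Section 6, that the two limits are ``essentially equivalent''), but be aware that the continuity of $H_p$ up to the corner $r=1$ which this requires is precisely Perdomo's lemma in disguise, so it must actually be proved; your factorization $F_p=(s-1)(r-s)H_p$ does accomplish this if you realize $H_p$ as a second divided difference of a $C^2$ function of $s$, which converges to $-\tfrac12\partial_s^2F_p$ as the nodes merge, uniformly on compact parameter sets. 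Your route to \eqref{3.4} (from \eqref{2.6}, $u_+u_-\to1$, i.e.\ $\alpha\to1$, then \eqref{3.9} and continuity) is cleaner and more informative than the paper's unexplained ``direct calculation.'' Finally, you rightly flag the boundary layer at $s=1$ in the $p\to-\infty$ limit, which the paper passes over silently; note that the dominating function you seek comes for free from the paper's own Lemma \ref{Lem4.1}: by \eqref{4.3}, $F_p$ depends on $p$ only through the factor $H(p)=\tfrac{s^{2p}-1}{r^{2p}-1}$ multiplying the positive quantity $(r^2-1)(1-\alpha^2)$, and $H(p)$ is decreasing in $p$, so $F_p\ge F_{p_0}$ for all $p\le p_0<-1$ and $F_{p_0}^{-1/2}$ is an integrable dominating function, making the interchange of limit and integral in \eqref{3.1} immediate by dominated convergence.
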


The following lemma is needed in the proof of (\ref{3.6}), see \cite[Corollary 4.2]{Per10}.
\begin{lem}[\cite{Per10}]\label{Lem3.2}
Let $\delta$ be a positive real number and let $f:(u_0-\delta,u_0+\delta)\to \mathbb{R}$ be a smooth function such that $f(u_0)=f'(u_0)=0$ and $f''(u_0)=-2a<0$. If for any small $c>0$, $u_1(c)<u_0<u_2(c)$ are such that $f(u_1(c))+c=0=f(u_2(c))+c$, then
\begin{equation*}
    \lim\limits_{c\to 0^+}\int_{u_1(c)}^{u_2(c)}\frac{du}{\sqrt{f(u)+c}}=\frac{\pi}{\sqrt{a}}.
\end{equation*}
\end{lem}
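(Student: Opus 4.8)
The plan is to reduce the integral to the exact quadratic model by a smooth, invertible change of variables that straightens $f$ near $u_0$, i.e. a one-dimensional Morse normalization. First I would note that $u_0$ is a \emph{strict} local maximum of $f$: from $f(u_0)=f'(u_0)=0$ and $f''(u_0)=-2a<0$, Taylor's theorem gives $f(u)<0$ for $0<|u-u_0|$ small. Hence for each sufficiently small $c>0$ the level set $\{f=-c\}$ near $u_0$ consists of exactly the two points $u_1(c)<u_0<u_2(c)$ of the hypothesis, and both converge to $u_0$ as $c\to 0^+$.

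Next I would straighten $f$. Applying Hadamard's lemma to $g:=-f$ (which satisfies $g(u_0)=g'(u_0)=0$), one writes $g(u)=(u-u_0)^2 h(u)$ with $h$ smooth and $h(u_0)=\tfrac12 g''(u_0)=a>0$; explicitly $h(u)=\int_0^1(1-t)g''(u_0+t(u-u_0))\,dt$. Setting $\psi(u):=(u-u_0)\sqrt{h(u)}$, the positivity of $h$ near $u_0$ makes $\psi$ smooth with $\psi(u_0)=0$ and $\psi'(u_0)=\sqrt a\neq 0$, so $\psi$ is a diffeomorphism from a neighborhood of $u_0$ onto a neighborhood of $0$ satisfying $f(u)=-\psi(u)^2$. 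In particular the endpoints obey $\psi(u_1(c))=-\sqrt c$ and $\psi(u_2(c))=\sqrt c$.

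Then I would substitute $v=\psi(u)$, which turns the integral into
\[
\int_{u_1(c)}^{u_2(c)}\frac{du}{\sqrt{f(u)+c}}=\int_{-\sqrt c}^{\sqrt c}\frac{(\psi^{-1})'(v)}{\sqrt{c-v^2}}\,dv,
\]
and follow with $v=\sqrt c\,\sin\theta$ to cancel the singular weight, giving $\int_{-\pi/2}^{\pi/2}(\psi^{-1})'(\sqrt c\,\sin\theta)\,d\theta$. Letting $c\to 0^+$, the inverse function theorem gives $(\psi^{-1})'(0)=1/\psi'(u_0)=1/\sqrt a$, and since $\psi^{-1}$ is $C^1$ the integrand converges uniformly on $[-\pi/2,\pi/2]$ to the constant $1/\sqrt a$; hence the limit equals $\pi/\sqrt a$, as claimed.

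The only delicate point — and the one I would be careful about — is the Morse-type factorization of the second step: producing the smooth square root $\psi$ with $\psi'(u_0)\neq 0$ and checking that it sends the two roots exactly to $\pm\sqrt c$, which is what lets the trigonometric substitution work uniformly in $c$. A direct squeeze between the comparison integrals for the model potentials $-(a\pm\epsilon)(u-u_0)^2$ is tempting but awkward, precisely because those potentials vanish at points different from $u_1(c),u_2(c)$; the change of variables avoids this endpoint mismatch entirely.
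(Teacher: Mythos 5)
Your proof is correct, and it takes a genuinely different route from the paper's. The paper proves the lemma by a two-sided barrier (squeeze) argument: for $b>a$ it sets $h(u)=f'(u)+2b(u-u_0)$, shows $h>0$ just to the right of $u_0$, and deduces that $g(u)=f(u)+c+b\bigl(u-u_2(c)\bigr)\bigl(u+u_2(c)-2u_0\bigr)$ is negative on $[u_0,u_2(c))$, i.e.\ $0<f(u)+c<b\bigl(u_2(c)-u\bigr)\bigl(u+u_2(c)-2u_0\bigr)$ there; since that comparison quadratic integrates in closed form to exactly $\frac{\pi}{2\sqrt{b}}$, each half-integral over $[u_0,u_2(c)]$ and $[u_1(c),u_0]$ is squeezed between $\frac{\pi}{2\sqrt{b}}$ for $b$ slightly above and below $a$. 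Note that the ``endpoint mismatch'' you flagged as making a naive squeeze awkward is precisely what this construction circumvents: the paper's parabola is anchored so as to \emph{share} the root $u_2(c)$ with $f+c$ and to be symmetric about $u_0$ (its other root is the reflection $2u_0-u_2(c)$), and the integral is split at $u_0$ so each half has matching endpoints; so the squeeze is not only viable but is the published proof. Your Morse-normalization argument (Hadamard factorization $-f=(u-u_0)^2h$, the smooth square root $\psi(u)=(u-u_0)\sqrt{h(u)}$ sending $u_1(c),u_2(c)$ to $\mp\sqrt{c}$, then $v=\sqrt{c}\sin\theta$ and continuity of $(\psi^{-1})'$) is also complete, and it buys something the squeeze does not: the substitution converts the integral \emph{exactly} into $\int_{-\pi/2}^{\pi/2}(\psi^{-1})'(\sqrt{c}\sin\theta)\,d\theta$, so a weight $g(u)$ in the numerator is absorbed at no extra cost — which immediately yields the stronger weighted statement (Lemma \ref{Lem6.4}, from \cite{PP15}--\cite{Per10}) that the paper needs later for \eqref{6.16}, and would also give an asymptotic expansion in $\sqrt{c}$ if desired. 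Conversely, the paper's argument is more elementary (no inverse function theorem or smooth square root) and needs only $C^2$ regularity of $f$; your argument as written uses smoothness, though $C^2$ in fact suffices since Hadamard's formula then gives $h$ continuous and $\psi$ a $C^1$ diffeomorphism, which is all the final limit requires.
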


For the sake of completeness, let us give a proof of Lemma \ref{Lem3.2}.
\begin{proof}[Proof of Lemma \ref{Lem3.2}]
For any $b>a$, we define the function $h(u)=f'(u)+2b(u-u_0)$. Since $h'(u_0)=2(b-a)>0$, there exists a $\epsilon(b)\in (0,\delta)$ such that $h'(u) > 0$ for any $u\in[u_0,u_0+\epsilon]$. Then it follows from $h(u_0)=0$ that $h(u)>0$ for $u\in(u_0,u_0+\epsilon]$.

For any small $c>0$ such that $u_2(c) <u_0+\epsilon$, the function
$$g(u)=f(u)+c+b(u-u_2(c))(u+u_2(c)-2u_0)$$
satisfies $g(u_2(c)) = 0$ and $g'(u) = h(u) > 0$. Thus $g(u) < 0$ for $u\in[u_0,u_2(c))$. We get
$$0 < f(u) + c < b(u_2(c)-u)(u+u_2(c)-2u_0),\quad \text{for any}\ u\in(u_0,u_2(c))$$
and then
$$\frac{\pi}{2\sqrt{b}}=\int_{u_0}^{u_2(c)}\frac{du}{\sqrt{b(u_2(c)-u)(u+u_2(c)-2u_0)}}<\int_{u_0}^{u_2(c)}\frac{du}{\sqrt{f(u)+c}}.$$

On the other hand, for any $0<b<a$ and small $c>0$, the same argument shows
$$\int_{u_0}^{u_2(c)}\frac{du}{\sqrt{f(u)+c}}<\int_{u_0}^{u_2(c)}\frac{du}{\sqrt{b(u_2(c)-u)(u+u_2(c)-2u_0)}}=\frac{\pi}{2\sqrt{b}}.$$
Therefore, we obtain $ \lim\limits_{c\to 0^+}\int_{u_0}^{u_2(c)}\frac{du}{\sqrt{f(u)+c}}=\frac{\pi}{2\sqrt{a}}.$
The similar calculation holds for $\int_{u_1(c)}^{u_0}\frac{du}{\sqrt{f(u)+c}}$, and then we complete the proof of this lemma.
\end{proof}

\begin{proof}[Proof of Theorem \ref{Thm3.1}]
At first, it follows from (\ref{2.10}) and (\ref{2.11}) that
\begin{align*}
    \lim\limits_{p\to -\infty}\Theta_p\{\alpha,r\}
    =\int_1^r\frac{ds}{\sqrt{(r^2-s^2)(1-\alpha^2 s^{-2})}}
    =\arccos\sqrt{\frac{1-\alpha^2}{r^2-\alpha^2}}.
\end{align*}

Now let $\beta>0$, and set $s^\beta=x$, where
\begin{align*}
    v=\frac{r^\beta-1}{2}z+\frac{r^\beta+1}{2},\quad\quad z\in (-1,1).
\end{align*}
Then the change of variables formula implies that
\begin{align*}
    \Theta_p\{\alpha,r\}=\int_{-1}^1\frac{dz}{\sqrt{J_p(\alpha,r,\beta,z)}},
\end{align*}
where
\begin{equation*}
    J_p(\alpha,r,\beta,z)
    =\frac{4\beta^2}{(r^\beta-1)^2}x^{\frac{2(\beta-1)}{\beta}}
    \left[\frac{1-x^{\frac{2p}{\beta}}}{1-r^{2p}}(r^2+\alpha^2)
    +\frac{x^{\frac{2p}{\beta}}-r^{2p}}{1-r^{2p}}(1+\alpha^2 r^2)
    -x^{\frac{2}{\beta}}-\alpha^2 r^2 x^{-\frac{2}{\beta}}\right].
\end{equation*}

When $r$ is close to $1$, a direct computation using Taylor expansions yields
\begin{align*}
\begin{split}
    J_p(\alpha,r,\beta,z)
    &=\left((2-2p)+(2+2p)\alpha^2\right)(1-z^2) \\
    &\quad \cdot \left[1+\frac{z}{2}\left(\beta-\frac{4-2p}{3}-\frac{4(1+p)\alpha^2}{(3-3p)+(3+3p)\alpha^2}\right)(r-1)+o(r-1)\right],
\end{split}
\end{align*}
from which it follows that
\begin{align*}
    \lim\limits_{r\to 1}\Theta_p\{\alpha,r\}
    =\int_{-1}^1\frac{dz}{\sqrt{\left((2-2p)+(2+2p)\alpha^2\right)(1-z^2)}}=\frac{\pi}{\sqrt{(2-2p)+(2+2p)\alpha^2}}.
\end{align*}

When $r$ tends to infinity, let $y=\frac{z+1}{2}\in (0,1)$ and choose $\beta=1$, we have
\begin{equation*}
    J_p(\alpha,r,1,z)=4\left[1-y^2+(2-2y)\frac{1}{r}+o\left(\frac{1}{r}\right)\right],\quad p<-1,
\end{equation*}
from which it follows that
\begin{equation*}
    \lim\limits_{r\to +\infty}\Theta_p\{\alpha,r\}=\int_0^1 \frac{dy}{\sqrt{1-y^2}}=\frac{\pi}{2}.
\end{equation*}

Next, let $w=u_{+}-u_{-}$ and $u=w t+u_{-}$ for $t\in (0,1)$. By $E_{\gamma}(u_{\pm})=E$, we have
\begin{equation*}
    \frac{u_{+}}{\sqrt{E}}\to 1,\quad  
    \frac{u_{-}}{\left(\frac{-p}{2\gamma}E\right)^{\frac{1}{2p}}}\to 1\ \text{and}\ 
    \frac{w}{\sqrt{E}}\to 1, \quad \text{as}\ E\to +\infty,
\end{equation*}
from which it follows that
\begin{align*}
    \lim\limits_{E\to +\infty}\Theta_p(\gamma,E)
    &=\lim\limits_{E\to +\infty}\int_0^1 \frac{dt}{\sqrt{\frac{E}{w^2}-\left(t+\frac{u_{-}}{w}\right)^2-w^{-4}\left(t+\frac{u_{-}}{w}\right)^{-2}+\frac{2\gamma}{p}w^{2p-2}\left(t+\frac{u_{-}}{w}\right)^{2p}}}\\
    &=\int_0^1 \frac{d t}{\sqrt{1-t^2}}
    =\frac{\pi}{2}.
\end{align*}

On the other hand, we choose $f(u)=E_{\gamma}^*+\frac{2\gamma}{p}u^{2p}-u^2-u^{-2}$ and $c=E-E_{\gamma}^*$ in Lemma \ref{Lem3.2}, then $u_0=u_{\gamma},\ u_{1}(c)=u_{-}$ and $ u_{2}(c)=u_{+}$. It follows from $f'(u_{\gamma})=0$ that $u_{\gamma}$ satisfies (\ref{3.7}). Thus we have 
\begin{align*}
    a &=-\frac{1}{2}f''(u_{\gamma})=1+3u_{\gamma}^{-4}+2\gamma (2p-1)u_{\gamma}^{2p-2}\\
    &=(2-2p)+(2+2p)u_{\gamma}^{-4}.
\end{align*}
Using Lemma \ref{Lem3.2}, we obtain
\begin{align*}
    \lim\limits_{E\to (E_{\gamma}^*)^{+}}\Theta_p(\gamma,E)
    =\frac{\pi}{\sqrt{a}}=\frac{\pi}{\sqrt{(2-2p)+(2+2p)u_{\gamma}^{-4}}}.
\end{align*}

Finally, (\ref{3.4}), (\ref{3.8}) and (\ref{3.9}) can be deduced by direct calculation.
\end{proof}


\section{Monotonicity of $\Theta_p\{\alpha,r\}$}
\label{sec:4}

Let $p\le -1$ and $0<\alpha<1<r$. By (\ref{2.10}) and (\ref{2.11}), we have
\begin{equation}\label{4.1}
    \Theta_{p}\{\alpha,r\}
    =\int_{1}^r \frac{ds}{\sqrt{F_p(s,\alpha,r)}},
\end{equation}
where
\begin{align}
    F_p(s,\alpha,r)
    &=\frac{s^{2p}-1}{r^{2p}-1}(r^2+\alpha^{2})
    +\frac{r^{2p}-s^{2p}}{r^{2p}-1}(1+\alpha^2 r^2)
    -s^2-\alpha^2 r^2 s^{-2}   \label{4.2}\\
    &=\frac{s^{2p}-1}{r^{2p}-1}(r^2-1)-(s^2-1)
    -\alpha^2\left[\frac{s^{2p}-1}{r^{2p}-1}(r^{2}-1)-r^2(1- s^{-2})\right]. \label{4.3}
\end{align}

\subsection{Monotonicity in $p$}$\ $

Given $1<s<r$, we denote
\begin{equation*}
    H(p)=\frac{s^{2p}-1}{r^{2p}-1}\in (0,1).
\end{equation*}

\begin{lem}\label{Lem4.1}
$H(p)$ is monotone decreasing in $p<0$.
\end{lem}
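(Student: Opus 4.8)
The plan is to show directly that $H'(p)<0$ for every $p<0$ (with $1<s<r$ fixed). First I would record the sign bookkeeping: since $p<0$ the map $x\mapsto x^{2p}$ is strictly decreasing on $(1,\infty)$, so $1>s^{2p}>r^{2p}>0$; hence $s^{2p}-1$ and $r^{2p}-1$ are both negative and
\[
H(p)=\frac{s^{2p}-1}{r^{2p}-1}=\frac{1-s^{2p}}{1-r^{2p}}\in(0,1),
\]
now written as a quotient of strictly positive quantities. This lets me pass to $\log H$ without worrying about signs, which is the natural move since it turns the quotient into a difference.

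Differentiating in $p$ and using $\tfrac{d}{dp}x^{2p}=2x^{2p}\ln x$, I obtain
\[
\frac{H'(p)}{H(p)}=\frac{d}{dp}\log H(p)=\frac{2\,r^{2p}\ln r}{1-r^{2p}}-\frac{2\,s^{2p}\ln s}{1-s^{2p}}.
\]
Because $H>0$, the sign of $H'(p)$ is that of the right-hand side, so it suffices to prove that the single-variable function $\psi(x):=\dfrac{x^{2p}\ln x}{1-x^{2p}}$ is strictly decreasing for $x>1$; then $\psi(r)<\psi(s)$ makes the bracket negative. Thus the two-variable problem collapses to a one-variable monotonicity statement.

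To analyze $\psi$ I would substitute $u=x^{2p}$, which decreases strictly from $1$ to $0$ as $x$ runs over $(1,\infty)$. Using $\ln x=\tfrac{\ln u}{2p}$ gives $\psi=\tfrac{1}{2p}\,\chi(u)$ with $\chi(u):=\dfrac{u\ln u}{1-u}$ on $(0,1)$. The decisive computation is that the numerator of $\chi'$ simplifies dramatically:
\[
\chi'(u)=\frac{(\ln u+1)(1-u)+u\ln u}{(1-u)^2}=\frac{\ln u+1-u}{(1-u)^2}.
\]
The final step invokes the elementary inequality $\ln u\le u-1$ (strict for $u\neq1$), so $\ln u+1-u<0$ on $(0,1)$ and hence $\chi'<0$ there, i.e. $\chi$ is strictly decreasing.

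The "main obstacle" here is not any hard estimate — the algebra collapses cleanly to $\ln u+1-u$ — but rather keeping the two sign reversals straight. Since $\tfrac{1}{2p}<0$ and $u$ is a decreasing function of $x$, strict decrease of $\chi$ on $(0,1)$ translates into strict decrease of $\psi$ on $(1,\infty)$ (a decreasing function of a decreasing function, scaled by a negative constant). Once this is tracked correctly, $\psi(r)<\psi(s)$ follows from $s<r$, giving $\tfrac{d}{dp}\log H<0$ and therefore $H'(p)<0$, which proves the lemma.
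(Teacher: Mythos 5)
Your proof is correct and takes essentially the same route as the paper: after differentiating (you via $\log H$, the paper via the quotient rule on $H$ directly), both arguments collapse to the monotonicity on $(0,1)$ of the same one-variable function --- your $\chi(u)=\frac{u\ln u}{1-u}$ is exactly the negative of the paper's $h(t)=\frac{t\log t}{t-1}$ --- proved by the identical elementary inequality $\ln u\le u-1$. The logarithmic differentiation and the substitution $u=x^{2p}$ are cosmetic repackagings of the paper's computation, and your sign bookkeeping (including the double reversal from $\frac{1}{2p}<0$ and $u$ decreasing in $x$) is accurate throughout.
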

\begin{proof}
Taking the derivative of $H(p)$ yields
\begin{align*}
    H'(p)=\frac{1}{p (r^{2p}-1)^3(s^{2p}-1)}\left(\frac{s^{2p}\log s^{2p}}{s^{2p}-1}-\frac{r^{2p}\log r^{2p}}{r^{2p}-1}\right).
\end{align*}
Let $h(t)=\frac{t\log t}{t-1}$ for $0<t<1$. Taking the derivative of $h(t)$ yields
\begin{align*}
    h'(t)=\frac{t-1-\log t}{(t-1)^2}> 0.
\end{align*}
Since $0<r^{2p}<s^{2p}<1$, we have $h(r^{2p})<h(s^{2p})$ and therefore $H'(p)<0$. We complete the proof of Lemma \ref{Lem4.1}.
\end{proof}

Since $(r^2-1)(1-\alpha^2)>0$ for $0<\alpha<1<r$, by (\ref{4.1}) and (\ref{4.3}), Lemma \ref{Lem4.1} immediately deduces the following theorem.
\begin{thm}\label{Thm4.2}
Let $p\le -1$. $\Theta_p\{\alpha,r\}$ is monotone increasing in $p$.
\end{thm}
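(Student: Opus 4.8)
The plan is to reduce the monotonicity of $\Theta_p\{\alpha,r\}$ in $p$ to a pointwise monotonicity statement for the integrand of (\ref{4.1}). The crucial structural feature of the coordinates $(\alpha,r)$ is that the limits of integration in $\Theta_p\{\alpha,r\}=\int_1^r F_p(s,\alpha,r)^{-1/2}\,ds$ are $1$ and $r$, both independent of $p$. Therefore, if I can show that $F_p(s,\alpha,r)$ is strictly decreasing in $p$ for each fixed $s\in(1,r)$, then $F_p(s,\alpha,r)^{-1/2}$ is strictly increasing in $p$ at every such $s$, and integrating over the fixed interval $[1,r]$ immediately transfers this monotonicity to $\Theta_p\{\alpha,r\}$. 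No boundary contributions appear precisely because the endpoints do not move with $p$; this is exactly what the substitution producing (\ref{2.10})--(\ref{2.11}) was arranged to guarantee.

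Everything thus hinges on the $p$-dependence of $F_p$, and here I would use the form (\ref{4.3}), which is engineered to collect that dependence into the single quantity $H(p)=\frac{s^{2p}-1}{r^{2p}-1}$. Expanding (\ref{4.3}) gives
$$F_p(s,\alpha,r)=H(p)\,(r^2-1)(1-\alpha^2)-(s^2-1)+\alpha^2 r^2(1-s^{-2}),$$
so that only the first summand depends on $p$ and
$$\frac{\partial F_p}{\partial p}=H'(p)\,(r^2-1)(1-\alpha^2).$$
Lemma \ref{Lem4.1} then supplies $H'(p)<0$ for $p<0$, while $(r^2-1)(1-\alpha^2)>0$ since $0<\alpha<1<r$. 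Hence $\partial_p F_p<0$, i.e. $F_p$ is strictly decreasing in $p$, which is the pointwise input required by the first step. Since $F_p$ is positive on $(1,r)$ by the period-function construction (there it equals $u_\tau^2$ between consecutive critical points), $F_p^{-1/2}$ is a genuine, strictly increasing function of $p$, and the result follows.

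I do not anticipate a real obstacle: the analytic work has been front-loaded into the choice of the coordinates $(\alpha,r)$ and into Lemma \ref{Lem4.1}, so what remains is essentially bookkeeping. The two points that merit care are verifying that (\ref{4.3}) genuinely isolates the $p$-dependence into $H(p)(r^2-1)(1-\alpha^2)$ --- equivalently, checking from (\ref{4.2}) that the coefficient of $H(p)$ is $(r^2+\alpha^2)-(1+\alpha^2 r^2)=(r^2-1)(1-\alpha^2)$ --- and confirming the two sign facts $H'(p)<0$ and $(r^2-1)(1-\alpha^2)>0$. The monotonicity is in fact strict on the whole open range $0<\alpha<1$, in agreement with the degenerate value $\Theta_p\{1,r\}=\frac{\pi}{2}$ recorded in (\ref{3.9}).
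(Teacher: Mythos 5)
Your proposal is correct and is essentially identical to the paper's own proof: both use the form (\ref{4.3}) to isolate the $p$-dependence of $F_p$ in the single term $H(p)(r^2-1)(1-\alpha^2)$, invoke Lemma \ref{Lem4.1} for $H'(p)<0$, and use $(r^2-1)(1-\alpha^2)>0$ together with the $p$-independent limits of integration to conclude. Your write-up merely makes explicit the bookkeeping (the expansion of (\ref{4.3}) and positivity of $F_p$) that the paper leaves implicit.
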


This together with (\ref{3.8}) yields an upper bound for $\Theta_p\{\alpha,r\}$.

\begin{cor}\label{Cor4.3}
Let $p<-1$. $\Theta_p\{\alpha,r\}<\frac{\pi}{2}$ for any $0<\alpha<1<r$.
\end{cor}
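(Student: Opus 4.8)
The plan is to derive the strict bound directly from the strict monotonicity in $p$ recorded in Theorem \ref{Thm4.2}, combined with the exact value \eqref{3.8} at $p=-1$. Since $p<-1$, once we know that $\Theta_p\{\alpha,r\}$ is \emph{strictly} increasing in $p$ on $(-\infty,-1]$, we immediately obtain $\Theta_p\{\alpha,r\}<\Theta_{-1}\{\alpha,r\}$, and it remains only to identify the right-hand side with $\frac{\pi}{2}$.

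First I would promote the monotonicity of Theorem \ref{Thm4.2} to a strict inequality. Writing $F_p$ in the form \eqref{4.3}, the entire dependence on $p$ enters through $H(p)=\frac{s^{2p}-1}{r^{2p}-1}$, whose coefficient is $(r^2-1)(1-\alpha^2)$, strictly positive for $0<\alpha<1<r$. Concretely, $\partial_p F_p(s,\alpha,r)=H'(p)(r^2-1)(1-\alpha^2)$, and Lemma \ref{Lem4.1} gives $H'(p)<0$ pointwise for each $s\in(1,r)$. Hence $\partial_p F_p<0$, so $\partial_p F_p^{-1/2}>0$ and the integrand in \eqref{4.1} is strictly increasing in $p$. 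Integrating over $s\in(1,r)$ shows that $\Theta_p\{\alpha,r\}$ is strictly increasing in $p\le -1$; in particular $\Theta_p\{\alpha,r\}<\Theta_{-1}\{\alpha,r\}$ for every $p<-1$.

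Next I would evaluate $\Theta_{-1}\{\alpha,r\}$ via \eqref{3.8}. By the definition \eqref{2.10}, $\Theta_{-1}\{\alpha,r\}=\Theta_{-1}(\gamma,E)$, where $(\gamma,E)$ is the pair associated through \eqref{2.6}--\eqref{2.7} to $u_{-}=\alpha^{-1/2}r^{-1/2}$ and $u_{+}=\alpha^{-1/2}r^{1/2}$. I must verify that this pair lies in $D_1$: since $u_{+}u_{-}=\alpha^{-1}>1$, formula \eqref{2.6} (valid for $p\le -1$) forces $\gamma>0$, and since $u_{+}>u_{-}>0$ are the two distinct roots of $E_{\gamma}(u)=E$, we automatically have $E>E_{\gamma}^{*}$. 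Thus $(\gamma,E)\in D_1$, and \eqref{3.8} yields $\Theta_{-1}\{\alpha,r\}=\Theta_{-1}(\gamma,E)=\frac{\pi}{2}$.

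Combining the two steps gives $\Theta_p\{\alpha,r\}<\Theta_{-1}\{\alpha,r\}=\frac{\pi}{2}$ for all $0<\alpha<1<r$ and all $p<-1$, which is the assertion. I do not anticipate any genuine obstacle: the corollary is essentially a bookkeeping assembly of the preceding results. The only two points deserving care are the upgrade of Theorem \ref{Thm4.2} to \emph{strict} monotonicity, which rests on the strict sign $H'(p)<0$ in Lemma \ref{Lem4.1} together with the strict positivity of $(r^2-1)(1-\alpha^2)$, and the check that the reparametrization $(\alpha,r)\mapsto(\gamma,E)$ indeed lands in $D_1$, so that \eqref{3.8} is applicable.
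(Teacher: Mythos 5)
Your proof is correct and follows essentially the same route as the paper, which derives Corollary \ref{Cor4.3} directly from the monotonicity in $p$ (Theorem \ref{Thm4.2}) together with the identity \eqref{3.8} at $p=-1$. The details you supply — the strict sign $\partial_p F_p = H'(p)(r^2-1)(1-\alpha^2)<0$ upgrading Theorem \ref{Thm4.2} to strict monotonicity, and the check that the reparametrization $(\alpha,r)\mapsto(\gamma,E)$ lands in $D_1$ so that \eqref{3.8} applies — are exactly the points the paper leaves implicit, and they are handled correctly.
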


\subsection{Monotonicity in $\alpha$}$\ $

Using Lemma \ref{Lem4.1}, we have
\begin{equation}\label{4.4}
    \frac{s^{2p}-1}{r^{2p}-1}(r^2-1)-(s^2-1)>0,
\end{equation}
and
\begin{equation}\label{4.5}
    \frac{s^{2p}-1}{r^{2p}-1}(r^{2}-1)-r^2(1- s^{-2})>0,
\end{equation}
for $1<s<r$ and $p<-1$. It follows from (\ref{4.3}), (\ref{4.4}) and (\ref{4.5}) that $F_p(s,\alpha,r)$ is monotone decreasing in $\alpha\in(0,1)$ for any $1<s<r$ and $p<-1$. Therefore, we obtain

\begin{thm}\label{Thm4.4}
Let $0<\alpha<1$. $\Theta_p\{\alpha,r\}$ is monotone increasing in $\alpha$.
\end{thm}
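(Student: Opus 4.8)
The plan is to exploit the alternative expression \eqref{4.3} for $F_p$, in which the dependence on $\alpha$ is isolated in a single term of definite sign. Writing
\[
F_p(s,\alpha,r)=A(s)-\alpha^2 B(s),
\]
where $A(s)=\frac{s^{2p}-1}{r^{2p}-1}(r^2-1)-(s^2-1)$ and $B(s)=\frac{s^{2p}-1}{r^{2p}-1}(r^2-1)-r^2(1-s^{-2})$, I read off from \eqref{4.4} and \eqref{4.5} that $A(s)>0$ and $B(s)>0$ for $1<s<r$ and $p<-1$. The crucial observation is that $B(s)$, the coefficient of $-\alpha^2$, is strictly positive, so for each fixed interior $s$ the map $\alpha\mapsto F_p(s,\alpha,r)$ is strictly decreasing on $(0,1)$.

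Since the limits of integration in \eqref{4.1} are $1$ and $r$, which do not depend on $\alpha$, this pointwise monotonicity transfers directly to the integral. Concretely, for $0<\alpha_1<\alpha_2<1$ one has $F_p(s,\alpha_1,r)>F_p(s,\alpha_2,r)>0$ for every $s\in(1,r)$, whence $F_p(s,\alpha_1,r)^{-1/2}<F_p(s,\alpha_2,r)^{-1/2}$ pointwise; integrating over $s\in(1,r)$ yields $\Theta_p\{\alpha_1,r\}<\Theta_p\{\alpha_2,r\}$. This gives strict monotonicity for $p<-1$, while the boundary case $p=-1$ is covered by $\Theta_{-1}\equiv\frac{\pi}{2}$ from \eqref{3.8}, where the conclusion holds trivially. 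Formally one may instead differentiate under the integral sign: since $\partial_\alpha F_p=-2\alpha B(s)$, one expects $\partial_\alpha\Theta_p\{\alpha,r\}=\int_1^r \alpha\,B(s)\,F_p(s,\alpha,r)^{-3/2}\,ds>0$.

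The only point requiring care is the behaviour at the endpoints $s=1$ and $s=r$, where $F_p$ vanishes: one checks $A(1)=B(1)=0$ and $A(r)=B(r)=0$, so $F_p(1,\alpha,r)=F_p(r,\alpha,r)=0$ for every $\alpha$, and the integrand carries an integrable square-root singularity there. I expect this to be the main — though mild — technical obstacle. It is precisely what makes the differentiation-under-the-integral route awkward, since dominating $B(s)\,F_p^{-3/2}$ uniformly near the endpoints is delicate because of the $F_p^{-3/2}$ blow-up. For this reason I prefer the pointwise-comparison argument, which avoids any interchange of limit and integral: both $\Theta_p\{\alpha_1,r\}$ and $\Theta_p\{\alpha_2,r\}$ are already known to be finite, being values of the period function, and a strict pointwise inequality between nonnegative integrands immediately yields the strict inequality between their integrals. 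Thus the entire argument reduces to the sign statement $B(s)>0$ supplied by \eqref{4.5}, which the preceding lemmas have already established.
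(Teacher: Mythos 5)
Your proposal is correct and follows essentially the same route as the paper: the paper's proof also uses the decomposition \eqref{4.3} together with the positivity facts \eqref{4.4} and \eqref{4.5} (from Lemma \ref{Lem4.1}) to conclude that $F_p(s,\alpha,r)$ is decreasing in $\alpha$ for each fixed $s\in(1,r)$, and then integrates the pointwise inequality over the $\alpha$-independent interval $(1,r)$. Your additional remarks on the integrable endpoint singularity and on avoiding differentiation under the integral are sound but not needed beyond what the paper does.
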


Combining this with (\ref{3.8}), we can conclude Corollary \ref{Cor4.3} again. Moreover, denote
\begin{equation*}
    \hat{\Theta}_p(r):=\Theta_p\{0,r\}=\int_1^r \frac{ds}{\sqrt{\frac{s^{2p}-1}{r^{2p}-1}(r^2-1)-(s^2-1)}}.
\end{equation*}
Making the change of variable $s^{\beta}=x=\frac{r^{\beta}-1}{2}z+\frac{r^{\beta}+1}{2}$ with $\beta=\frac{4-2p}{3}$ yields
\begin{equation*}
   \hat{\Theta}_p(r)=\int_{-1}^1 \frac{dz}{\sqrt{\hat{J}_p(r,z)}},
\end{equation*}
where 
\begin{equation*}
    \hat{J}_p(r,z)=\frac{4\beta^2}{(r^{\beta}-1)^2}\left(\frac{r^2-r^{2p}}{1-r^{2p}}x^{\frac{2(\beta-1)}{\beta}}-x^2-\frac{r^2-1}{1-r^{2p}}x^{\frac{2(\beta-1+p)}{\beta}}\right).
\end{equation*}
Note that $\hat{\Theta}_p(r)$ is the period function $\Theta\left(\frac{1}{1-2p},r\right)$ given by Ben Andrews \cite{Ben03}. We recall the properties of $\hat{\Theta}_p(r)$ as follows:
\begin{prop}[\cite{Ben03}]\label{Prop4.5}
Let $p\le -1$. Then $\hat{\Theta}_p(\cdot)$ is continuous on $(1,\infty)$, and has the following limiting values:
\begin{align}
    & \label{4.6}
    \lim\limits_{r\to\infty}\hat{\Theta}_p(r)=\frac{\pi}{2},\\
    & \label{4.7}
    \lim\limits_{r\to 1}\hat{\Theta}_p(r)=\frac{\pi}{\sqrt{2-2p}}.
\end{align}
\end{prop}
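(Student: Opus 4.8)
The plan is to recognize Proposition \ref{Prop4.5} as the boundary specialization $\alpha=0$ of Theorem \ref{Thm3.1}. By definition $\hat\Theta_p(r)=\Theta_p\{0,r\}$, and substituting $\alpha=0$ into the limiting values (\ref{3.2}) and (\ref{3.3}) produces exactly (\ref{4.7}) and (\ref{4.6}): the expression $\frac{\pi}{\sqrt{(2-2p)+(2+2p)\alpha^2}}$ collapses to $\frac{\pi}{\sqrt{2-2p}}$, and the right-hand side of (\ref{3.3}) does not depend on $\alpha$. The one point needing care is that the domain $D_2=\{0<\alpha<1<r\}$ in Theorem \ref{Thm3.1} excludes $\alpha=0$, so I would verify that the computations behind (\ref{3.2}) and (\ref{3.3}) persist at this boundary value; they do, since the relevant leading coefficients stay strictly positive there when $p\le-1$.

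Concretely, for (\ref{4.7}) I would repeat the proof of (\ref{3.2}) with $\alpha=0$, using the substitution $s^\beta=x=\frac{r^\beta-1}{2}z+\frac{r^\beta+1}{2}$ with the distinguished exponent $\beta=\frac{4-2p}{3}$ to write $\hat\Theta_p(r)=\int_{-1}^1 dz/\sqrt{\hat J_p(r,z)}$. The Taylor expansion of $\hat J_p$ as $r\to1$ has leading term $(2-2p)(1-z^2)$, the $\alpha=0$ instance of the bracket found in Theorem \ref{Thm3.1}; since $2-2p>0$ this is nondegenerate and $\int_{-1}^1 dz/\sqrt{(2-2p)(1-z^2)}=\frac{\pi}{\sqrt{2-2p}}$. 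For (\ref{4.6}) I would likewise rerun the proof of (\ref{3.3}) with $\alpha=0$: taking $\beta=1$ and $y=\frac{z+1}{2}$, the transformed integrand has leading term $4(1-y^2)$, which is already $\alpha$-free, so $\hat\Theta_p(r)\to\int_0^1 dy/\sqrt{1-y^2}=\frac{\pi}{2}$.

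Continuity of $\hat\Theta_p$ on $(1,\infty)$ I would read off the explicit representation $\hat\Theta_p(r)=\int_{-1}^1 dz/\sqrt{\hat J_p(r,z)}$ recorded before the statement: $\hat J_p(r,z)$ is smooth and positive on $(1,\infty)\times(-1,1)$ and vanishes like $(1-z^2)$ at $z=\pm1$, so the endpoint singularities are integrable and locally uniform in $r$, whence dominated convergence gives continuity. The main (and essentially only) obstacle is the boundary bookkeeping at $\alpha=0$: checking that neither the Taylor expansions nor the domination bounds from the proof of Theorem \ref{Thm3.1} degenerate there, which holds because the leading coefficients $2-2p$ and $4(1-y^2)$ are bounded away from zero for $p\le-1$. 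Alternatively, since $\hat\Theta_p(r)=\Theta\bigl(\tfrac{1}{1-2p},r\bigr)$ is Ben Andrews' period function, one may simply cite \cite{Ben03}.
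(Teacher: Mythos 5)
Your proposal is correct, but it takes a genuinely different route from the paper: the paper offers no argument for Proposition \ref{Prop4.5} at all — it identifies $\hat{\Theta}_p(r)$ with Andrews' period function $\Theta\bigl(\tfrac{1}{1-2p},r\bigr)$ and cites \cite{Ben03}, so your closing sentence is in fact the paper's entire ``proof.'' Your specialization of Theorem \ref{Thm3.1} to the boundary value $\alpha=0$ is sound and self-contained: $F_p(s,0,r)$ is exactly the integrand of $\hat{\Theta}_p$, the displayed $\hat{J}_p(r,z)$ is precisely $J_p(0,r,\beta,z)$ with $\beta=\frac{4-2p}{3}$, the leading coefficient $(2-2p)(1-z^2)$ of the $r\to1$ expansion is nondegenerate because $2-2p\ge 4$ for $p\le-1$, and positivity of the integrand on $1<s<r$ is supplied by (\ref{4.4}). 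What your route buys is that (\ref{4.6})--(\ref{4.7}) become the $\alpha=0$ endpoints of (\ref{3.3}) and (\ref{3.2}), proved by computations the paper has already carried out, with no external input; what the citation buys is economy, since \cite{Ben03} is needed anyway for Proposition \ref{Prop4.6} (the monotonicity statement $\partial_r\hat{J}_p<0$), which no limiting-value computation of your kind yields, so the paper extracts both propositions from one source. Two minor points to tighten in your write-up: the paper's expansion behind (\ref{3.3}) is stated only for $p<-1$, so for (\ref{4.6}) at $p=-1$ you should either observe that $r^{2p}=r^{-2}\to0$ keeps the same expansion valid, or note the exact evaluation $\hat{\Theta}_{-1}(r)=\int_1^r s\,ds/\sqrt{(s^2-1)(r^2-s^2)}=\frac{\pi}{2}$ (the identity (\ref{3.8}) does not literally cover $\alpha=0$, since $D_1$ corresponds to $0<\alpha<1$); and for the mere limit (\ref{4.7}) any $\beta>0$ would do, the distinguished choice $\beta=\frac{4-2p}{3}$ mattering only for the monotonicity in Proposition \ref{Prop4.6}.
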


\begin{prop}[\cite{Ben03}]\label{Prop4.6}
Let $p<-1$ and $r>1$. Then
$\frac{\partial}{\partial r}\hat{J}_p(r,z)<0$ for any $z\in(-1,1)$. Furthermore, $\hat{\Theta}_p(r)$ is monotone increasing in $r$.   
\end{prop}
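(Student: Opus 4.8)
The plan is to reduce the monotonicity of $\hat{\Theta}_p$ to the pointwise estimate $\partial_r\hat{J}_p(r,z)<0$ and then to establish the latter by direct differentiation. First I would note that the change of variables $s^{\beta}=x=\frac{r^\beta-1}{2}z+\frac{r^\beta+1}{2}$ was chosen precisely so that the endpoints $z=\pm1$ are independent of $r$, while $\hat{J}_p$ inherits simple zeros at $z=\pm1$ from the simple zeros of the potential $g(s,r)=\frac{s^{2p}-1}{r^{2p}-1}(r^2-1)-(s^2-1)$ at $s=1,r$; indeed $\hat{J}_p=\frac{4\beta^2}{(r^\beta-1)^2}x^{2-2/\beta}\,g$. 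Since $\hat{J}_p>0$ on $(-1,1)$, one may differentiate under the integral sign,
\[
    \hat{\Theta}_p'(r)=-\frac{1}{2}\int_{-1}^1 \hat{J}_p(r,z)^{-3/2}\,\frac{\partial\hat{J}_p}{\partial r}(r,z)\,dz,
\]
the integrand being absolutely integrable because $\partial_r\hat{J}_p$ also vanishes (to first order in $1\mp z$) at the endpoints, so that $\hat{J}_p^{-3/2}\partial_r\hat{J}_p$ carries only the integrable $(1-z^2)^{-1/2}$-type singularity. Hence once $\partial_r\hat{J}_p<0$ is proved, the statement follows at once.

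For the pointwise estimate I would write $\hat{J}_p=\frac{4\beta^2}{(r^\beta-1)^2}\,Q$ with $Q=A\,x^{a_1}-x^2-B\,x^{a_3}$, where $A=\frac{r^2-r^{2p}}{1-r^{2p}}$, $B=\frac{r^2-1}{1-r^{2p}}$ (so that $A-B=1$), $a_1=\frac{2(\beta-1)}{\beta}$, $a_3=\frac{2(\beta-1+p)}{\beta}$, and $\beta=\frac{4-2p}{3}>2$ since $p<-1$. Taking the logarithmic derivative, the sign of $\partial_r\hat{J}_p$ is that of
\[
    -\frac{2\beta r^{\beta-1}}{r^\beta-1}\,Q+\partial_r Q,\qquad \partial_r Q=(\partial_r A)x^{a_1}-(\partial_r B)x^{a_3}+\big(Aa_1x^{a_1-1}-2x-Ba_3x^{a_3-1}\big)\,\partial_r x,
\]
where $\partial_r x=\frac{\beta r^{\beta-1}(x-1)}{r^\beta-1}$. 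Collecting terms turns this into a finite combination of monomials $c(r)\,x^{a}$ over the range $1<x<r^\beta$.

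The main obstacle is the sign analysis of this combination, and here the special exponent $\beta=\frac{4-2p}{3}$ should be decisive: it is the same choice that annihilated the odd-in-$z$ correction in the earlier computation of $\lim_{r\to1}\hat{\Theta}_p$, and I expect it to be exactly what forces the interior expression to be sign-definite. Concretely, I would exploit the consistency vanishing at $x=1$ and $x=r^\beta$ to factor out the corresponding boundary factors, then divide through by a suitable power of $x$ and reduce the claim to the monotonicity of a single-variable auxiliary function whose boundary values carry the correct sign. Since, as already noted just before the statement, $\hat{\Theta}_p(r)$ coincides with the period function $\Theta(\tfrac{1}{1-2p},r)$ studied by Ben Andrews, one may alternatively invoke \cite{Ben03} directly once this identification is recorded; the computational route above is what I would carry out to make the argument self-contained.
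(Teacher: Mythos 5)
The paper offers no proof of this proposition at all: it is quoted from \cite{Ben03} after recording the identification $\hat{\Theta}_p(r)=\Theta\bigl(\tfrac{1}{1-2p},r\bigr)$ in the sentence immediately preceding the statement. Your closing remark---that one may invoke \cite{Ben03} directly once this identification is made---therefore coincides exactly with what the paper does, and as a citation-based argument your proposal matches the paper. Your preliminary reduction is also sound: the factorization $\hat{J}_p=\frac{4\beta^2}{(r^\beta-1)^2}x^{2-2/\beta}g$ checks out, $\hat{J}_p(r,\pm 1)=0$ holds identically in $r$ (your identity $A-B=1$ gives the vanishing at $x=1$, and a short computation gives it at $x=r^\beta$), so $\partial_r\hat{J}_p$ vanishes at the endpoints and differentiation under the integral sign with the $(1-z^2)^{-1/2}$-type singularity is legitimate. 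This correctly reduces the monotonicity of $\hat{\Theta}_p$ to the pointwise estimate $\partial_r\hat{J}_p<0$.

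However, judged as the self-contained proof you advertise, the proposal has a genuine gap at the only hard step: the sign analysis of $\partial_r\hat{J}_p$ is never carried out. Saying you would ``exploit the consistency vanishing at $x=1$ and $x=r^\beta$ to factor out boundary factors \ldots\ and reduce the claim to the monotonicity of a single-variable auxiliary function whose boundary values carry the correct sign'' restates the goal without identifying the auxiliary function, proving its monotonicity, or showing where the hypothesis $p<-1$ (equivalently $\sigma=\tfrac{1}{1-2p}<\tfrac13$ in Andrews' normalization) enters---and it must enter, since the conclusion fails outside this range. Your only evidence that $\beta=\tfrac{4-2p}{3}$ is ``decisive'' is that it annihilates the odd-in-$z$ correction in the $r\to1$ expansion; that controls the limiting value at $r=1$ but says nothing about sign-definiteness of $\partial_r\hat{J}_p$ for all $r>1$ and $z\in(-1,1)$, which in \cite{Ben03} is a genuinely delicate computation. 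A correct write-up should either reproduce Andrews' estimate in full or, as the paper does, simply record the identification and cite it; the intermediate sketch, as it stands, would not compile into a proof.
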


Consequently, we deduce the following corollary.
\begin{cor}\label{Cor4.7}
Let $p<-1$. $\Theta_p\{\alpha,r\}>\hat{\Theta}_p(r)>\frac{\pi}{\sqrt{2-2p}}$ for any $0<\alpha<1<r$.
\end{cor}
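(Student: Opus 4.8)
The plan is to chain together the two monotonicity results just established with the asymptotic value of $\hat{\Theta}_p$ as $r\to 1$. By Theorem \ref{Thm4.4}, $\Theta_p\{\alpha,r\}$ is monotone increasing in $\alpha$ on $(0,1)$; since $\hat{\Theta}_p(r)=\Theta_p\{0,r\}$ is the value at the left endpoint $\alpha=0$, the strict decrease of $F_p$ in $\alpha$ (recorded in the discussion preceding Theorem \ref{Thm4.4}, using the strict inequalities \eqref{4.4} and \eqref{4.5} valid for $p<-1$) gives the strict bound $\Theta_p\{\alpha,r\}>\hat{\Theta}_p(r)$ for every $0<\alpha<1<r$. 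This handles the first inequality essentially for free.

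For the second inequality $\hat{\Theta}_p(r)>\frac{\pi}{\sqrt{2-2p}}$, I would invoke Proposition \ref{Prop4.6}, which states that $\hat{\Theta}_p(r)$ is monotone increasing in $r$ on $(1,\infty)$ when $p<-1$. Thus for any fixed $r>1$ we have $\hat{\Theta}_p(r)>\inf_{\rho>1}\hat{\Theta}_p(\rho)=\lim_{\rho\to 1^+}\hat{\Theta}_p(\rho)$. By the limiting value \eqref{4.7} in Proposition \ref{Prop4.5}, this infimum equals $\frac{\pi}{\sqrt{2-2p}}$, yielding the desired strict lower bound. Combining the two steps gives the chain $\Theta_p\{\alpha,r\}>\hat{\Theta}_p(r)>\frac{\pi}{\sqrt{2-2p}}$.

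There is essentially no genuine obstacle here, since both Theorem \ref{Thm4.4} and Propositions \ref{Prop4.5}–\ref{Prop4.6} are already available; the corollary is a direct assembly of them. The only point requiring a little care is the strictness of the first inequality: I must confirm that the monotonicity in $\alpha$ is \emph{strict} rather than merely non-strict, which follows because for $p<-1$ the bracketed coefficient of $\alpha^2$ in \eqref{4.3} is strictly positive (by \eqref{4.4} and \eqref{4.5}) on the open interval $1<s<r$, so $F_p$ strictly decreases in $\alpha$ and hence the integral strictly increases. The second strictness comes directly from the strict monotonicity in $r$ asserted in Proposition \ref{Prop4.6} together with the fact that $r>1$ is fixed away from the limiting value.
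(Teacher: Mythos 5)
Your proposal is correct and is essentially the paper's own argument: the paper deduces Corollary \ref{Cor4.7} precisely by combining Theorem \ref{Thm4.4} (monotonicity in $\alpha$, with strictness coming from the strict inequalities (\ref{4.4}) and (\ref{4.5}) for $p<-1$, so that $\Theta_p\{\alpha,r\}>\Theta_p\{0,r\}=\hat{\Theta}_p(r)$) with Proposition \ref{Prop4.6} (strict monotonicity of $\hat{\Theta}_p$ in $r$) and the limiting value (\ref{4.7}) of Proposition \ref{Prop4.5}. Your added attention to strictness at the endpoint $\alpha=0$ and in the limit $r\to 1^+$ is exactly what the paper's terse ``Consequently'' leaves implicit.
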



\section{Proof of Theorem \ref{Thm1.2}}
\label{sec:5}

At first, if $-7\le p<-1$, using Corollary \ref{Cor4.3} and Corollary \ref{Cor4.7}, we have
\begin{align*}
    \frac{\pi}{2}>\Theta_{p}\{\alpha,r\}>\frac{\pi}{\sqrt{2-2p}}>\frac{\pi}{4},
\end{align*}
for any $0<\alpha<1<r$. Therefore, if $\gamma>0$ and $E>E_{\gamma}^*$, then
\begin{align*}
    \Theta_p(\gamma,E)=\Theta_{p}\{\alpha(\gamma,E),r(\gamma,E)\}\in \left(\frac{\pi}{4},\frac{\pi}{2}\right).
\end{align*}
It follows from Lemma \ref{Lem 2.2} (i) that the solution to (\ref{1.1}) must be constant. Moreover, the solution to (\ref{1.1}) is unique. We prove Theorem \ref{Thm1.2} (i).

Next, if $1-2(k+1)^2\le p<1-2k^2$ and $\gamma>\gamma_{p,l}$ for $1\le l<k$, then it follows from (\ref{3.5}), (\ref{3.6}) and (\ref{3.7}) that
\begin{align*}
    \lim\limits_{E\to +\infty}\Theta_p(\gamma,E)=\frac{\pi}{2}
\end{align*}
and
\begin{align*}
    \lim\limits_{E\to (E_{\gamma}^{*})^{+}}\Theta_p(\gamma,E)<\frac{\pi}{2(l+1)}.
\end{align*}
Therefore, there exist at least $l$ values $\{E_j\}_{j=1}^{l}$ such that $E_j>E_{\gamma}^*$ and $\Theta_p(\gamma,E_j)=\frac{\pi}{2(j+1)}$. It follows from Lemma \ref{Lem 2.2} (ii) that there exist at least $(l+1)$ solutions to (\ref{1.1}). We complete the proof of Theorem \ref{Thm1.2}.


\section{Proofs of Theorem \ref{Thm1.3} and Theorem \ref{Thm1.4}}
\label{sec:6}

\subsection{Proof of Theorem \ref{Thm1.3} (i)}$\ $

Let $\varphi(\theta)$ be the solution to (\ref{1.4}), i.e.
\begin{equation}\label{6.1}
    \varphi^{-p}\left(\frac{\varphi_{\theta}^2}{2\varphi}+\frac{\varphi+\varphi^{-1}}{2}\right)^{q-1}\left(\varphi_{\theta\theta}-\frac{\varphi_{\theta}^2}{2\varphi}+\frac{\varphi-\varphi^{-1}}{2}\right)=\gamma,\quad \text{on}\ \mathbb{S}^1,
\end{equation}
where $\gamma$ is a positive constant. The case $q=1$ of Theorem \ref{Thm1.3} (i) was shown in Theorem \ref{Thm1.1}. Recall that the following Heintze-Karcher type inequality in $\mathbb{H}^2$ is crucial in the proof of Theorem \ref{Thm1.1} in \cite{LX}.

\begin{lem}[\cite{LX}]\label{Lem-HK}
    Let $\Omega$ be a smooth uniformly h-convex bounded domain in $\mathbb{H}^2$, and $\varphi(\theta)$ be its exponential horospherical support function. Then we have
    \begin{equation}\label{6.2}
        \int_{\partial\Omega}\left(\frac{\cosh r-\Tilde{u}}{\Tilde{\kappa}}-\Tilde{u}\right)d\mu\ge 0,
    \end{equation}
    which is equivalent to
    \begin{equation}\label{6.3}
        \int_{\mathbb{S}^1}\left(\varphi_{\theta\theta}-\frac{\varphi_{\theta}^2}{\varphi}\right) \left(\varphi_{\theta\theta}-\frac{\varphi_{\theta}^2}{2\varphi}+\frac{\varphi-\varphi^{-1}}{2}\right) d\theta\ge 0.
    \end{equation}
    Equality holds if and only if $\Omega$ is a geodesic ball, i.e. $\varphi(\theta)$ is given by (\ref{1.3}).
\end{lem}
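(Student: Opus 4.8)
The plan is to work primarily with the analytic form \eqref{6.3}, treating the equivalence with the geometric form \eqref{6.2} as a separate bookkeeping step. The equality case predicted by the lemma---functions of the form \eqref{1.3}, i.e.\ a constant plus a single first harmonic---strongly suggests that \eqref{6.3}, after integration by parts, is nothing but a Wirtinger-type spectral inequality. So I would first reduce \eqref{6.3} to such an inequality and dispatch it, and only then address the translation into the Heintze--Karcher integrand.

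For the reduction, write $A=\varphi_{\theta\theta}-\varphi_\theta^2/\varphi$ and $B=\varphi_{\theta\theta}-\varphi_\theta^2/(2\varphi)+(\varphi-\varphi^{-1})/2$, so the integrand in \eqref{6.3} is $AB$. The key observation is $A=\varphi\,(\log\varphi)_{\theta\theta}$, so that with $g=(\log\varphi)_\theta=\varphi_\theta/\varphi$ one has $A=\varphi g_\theta$. Integrating by parts on $\mathbb{S}^1$ (all boundary terms vanish by $2\pi$-periodicity),
\[
\int_{\mathbb{S}^1}AB\,d\theta=-\int_{\mathbb{S}^1}(\varphi B)_\theta\,g\,d\theta .
\]
A short computation gives $\varphi B=\varphi\varphi_{\theta\theta}-\tfrac12\varphi_\theta^2+\tfrac12(\varphi^2-1)$ and hence $(\varphi B)_\theta=\varphi(\varphi_{\theta\theta\theta}+\varphi_\theta)$. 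Substituting $g=\varphi_\theta/\varphi$ cancels the factor $\varphi$, and one further integration by parts on the $\varphi_\theta\varphi_{\theta\theta\theta}$ term yields the clean identity
\[
\int_{\mathbb{S}^1}AB\,d\theta=\int_{\mathbb{S}^1}\bigl(\varphi_{\theta\theta}^2-\varphi_\theta^2\bigr)\,d\theta .
\]

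Now \eqref{6.3} follows from Wirtinger's inequality: expanding $\varphi=\sum_{n}a_n e^{in\theta}$ gives
\[
\int_{\mathbb{S}^1}\bigl(\varphi_{\theta\theta}^2-\varphi_\theta^2\bigr)\,d\theta
=2\pi\sum_{n\in\mathbb{Z}}n^2(n^2-1)\,|a_n|^2\ge 0,
\]
since $n^2(n^2-1)\ge 0$ for every integer $n$. Equality forces $a_n=0$ for all $|n|\ge 2$, i.e.\ $\varphi(\theta)=a_0+\alpha\cos(\theta-\theta_0)$, which is exactly \eqref{1.3}. This identifies the equality case with the geodesic balls and finishes the analytic part. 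Note that this argument uses only smoothness and positivity of $\varphi$, not h-convexity; the hypothesis enters only to make $\Omega$ a genuine h-convex domain so that the geometric form \eqref{6.2} is meaningful (in particular $\tilde\kappa>0$).

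The remaining task, and where I expect the real work to lie, is the equivalence of \eqref{6.2} and \eqref{6.3}. The plan is to rewrite the geometric integrand in support-function language using the horospherical parametrization of \cite{LX}: express the arclength element $d\mu$, the support-type function $\tilde u$, the geodesic curvature $\tilde\kappa$, and $\cosh r=\varphi_\theta^2/(2\varphi)+(\varphi+\varphi^{-1})/2$ all in terms of $\varphi$ and its $\theta$-derivatives, and then verify that $\bigl((\cosh r-\tilde u)/\tilde\kappa-\tilde u\bigr)\,d\mu$ integrates to the left-hand side of \eqref{6.3}. The main obstacle is entirely in this translation: one must correctly identify $1/\tilde\kappa$ with the convexity operator $B$ up to the $\varphi$-dependent weight converting $d\mu$ to $d\theta$, and track the several weights so that the $\cosh r$ and $\tilde u$ contributions recombine precisely into the product $AB$. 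Once \eqref{6.3} is reached, the inequality itself is the elementary spectral estimate above.
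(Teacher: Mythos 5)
Your analytic core is correct, and I verified it: with $A=\varphi_{\theta\theta}-\varphi_\theta^2/\varphi=\varphi g_\theta$, $g=\varphi_\theta/\varphi$, and $(\varphi B)_\theta=\varphi(\varphi_{\theta\theta\theta}+\varphi_\theta)$, two integrations by parts do give $\int_{\mathbb{S}^1}AB\,d\theta=\int_{\mathbb{S}^1}(\varphi_{\theta\theta}^2-\varphi_\theta^2)\,d\theta$, and Wirtinger yields (\ref{6.3}) with equality exactly for $\varphi=\beta+\alpha\cos(\theta-\theta_0)$. Note that the paper offers no proof of this lemma at all --- it is quoted from \cite{LX} --- so there is nothing to compare step by step; your Fourier route directly establishes the analytic form (\ref{6.3}), which is in fact the only form the paper later uses (in the proof of Theorem \ref{Thm1.3} (i)). Two small caveats on the equality case: the normalization $\beta=\sqrt{1+2\gamma-\alpha^2}$ in (\ref{1.3}) comes from the equation (\ref{1.1}) with $p=-1$, not from equality in (\ref{6.3}); and to conclude ``geodesic ball'' you should note that for $\varphi=\beta+\alpha\cos(\theta-\theta_0)$ one computes $A[\varphi]=(\beta^2-\alpha^2-1)/(2\varphi)$, so h-convexity forces $\beta^2-\alpha^2>1$, which is precisely the condition for such a $\varphi$ to be the exponential horospherical support function of a geodesic ball (of radius $\rho$ with $e^{2\rho}=\beta^2-\alpha^2$).

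The genuine gap is the equivalence (\ref{6.2}) $\Leftrightarrow$ (\ref{6.3}), which you leave as a plan --- and the plan as stated begins with a misidentification that would derail it: $\tilde{\kappa}$ in (\ref{6.2}) is not the geodesic curvature $\kappa$ of $\partial\Omega$ but the shifted curvature $\kappa-1$ (uniform h-convexity means $\kappa>1$, i.e.\ $\tilde{\kappa}>0$). You can see this is forced on a centered geodesic ball $\varphi\equiv e^\rho$: there $\cosh r=\cosh\rho$, $\tilde{u}=\sinh\rho$, $\kappa=\coth\rho$, $d\mu=\sinh\rho\,d\theta$, and with the honest geodesic curvature the integrand equals $(\cosh\rho-\sinh\rho)\tanh\rho-\sinh\rho=-\tanh\rho\sinh\rho<0$, contradicting both the inequality and the equality case, whereas with $\tilde{\kappa}=\kappa-1$ it vanishes identically, as it must. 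With the correct dictionary from \cite{LX} the translation is in fact pointwise, not merely after integration: one has $\tilde{u}=\frac{\varphi_\theta^2}{2\varphi}+\frac{\varphi-\varphi^{-1}}{2}$ (so $\cosh r-\tilde{u}=\varphi^{-1}$ by the paper's formula for $\cosh r$), $d\mu=A[\varphi]\,d\theta$, and $\tilde{\kappa}^{-1}=\varphi\,A[\varphi]$; hence, writing $B=A[\varphi]$,
\begin{equation*}
\left(\frac{\cosh r-\tilde{u}}{\tilde{\kappa}}-\tilde{u}\right)d\mu
=\bigl(B-\tilde{u}\bigr)B\,d\theta
=AB\,d\theta,
\end{equation*}
since $B-\tilde{u}=\varphi_{\theta\theta}-\varphi_\theta^2/\varphi=A$. (As a sanity check, $\int_0^{2\pi}A[\varphi]\,d\theta=2\pi\sinh\rho$ on $\varphi=\beta+\alpha\cos\theta$ with $\beta^2-\alpha^2=e^{2\rho}$, the correct perimeter.) So your outline is repairable, but as written it would fail at the first identification, and until this dictionary is actually verified or imported from \cite{LX}, the equivalence assertion of the lemma remains unproved in your write-up.
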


Now we can give the proof. If $p\ge -1$ and $q< 1$, by (\ref{6.1}) and (\ref{6.3}), we have
\begin{align*}
    0 &\le \int_{\mathbb{S}^1}\left(\varphi_{\theta\theta}-\frac{\varphi_{\theta}^2}{\varphi}\right) \varphi^{p} \left(\frac{\varphi_{\theta}^2}{2\varphi}+\frac{\varphi+\varphi^{-1}}{2}\right)^{1-q} d\theta\\
    &=\int_{\mathbb{S}^1}\left(\frac{\varphi_{\theta}}{\varphi}\right)_{\theta} \varphi^{1+p} \left(\frac{\varphi_{\theta}^2}{2\varphi}+\frac{\varphi+\varphi^{-1}}{2}\right)^{1-q} d\theta\\
    &=- (1+p)\int_{\mathbb{S}^1}\varphi_{\theta}^2 \varphi^{p-1} \left(\frac{\varphi_{\theta}^2}{2\varphi}+\frac{\varphi+\varphi^{-1}}{2}\right)^{1-q} d\theta\\
    &\quad  -(1-q)\int_{\mathbb{S}^1}\varphi_{\theta}^2 \varphi^{1+p} \left(\varphi,\frac{\varphi_{\theta}^2}{2\varphi}+\frac{\varphi+\varphi^{-1}}{2}\right)^{-q}\left(\varphi_{\theta\theta}-\frac{\varphi_{\theta}^2}{2\varphi}+\frac{\varphi-\varphi^{-1}}{2}\right) d\theta\\
    & \le 0.
\end{align*}
Hence the above inequalities are both equalities, and then $\varphi_{\theta}\equiv0 $ on $\mathbb{S}^1$. This completes the proof of  Theorem \ref{Thm1.3} (i).

\subsection{The period function $\Theta_{p,q}$}$\ $

By Theorem \ref{Thm1.3} (i), we only consider the case where $p\le -1,\ q\ge 1$ and $\varphi(\theta)$ is not constant from now on. Define $u(\tau)=\sqrt{\varphi(2\tau)}$, then (\ref{6.1}) is equivalent to
\begin{equation}\label{6.4}
    u^{1-2p}(u_{\tau}^2+u^2+u^{-2})^{q-1} (u_{\tau\tau}+u-u^{-3})=2^q \gamma.
\end{equation}
Since $p<0<q$, (\ref{6.4}) can be rewritten as 
\begin{equation*}
\begin{aligned}
    &  \left(\frac{1}{q}(u_{\tau}^2+u^2+u^{-2})^{q}\right)_{\tau}=\frac{2^q \gamma}{p}(u^{2p})_{\tau}.
\end{aligned}
\end{equation*}
Thus a first integral of (\ref{6.4}) is given by
\begin{equation*}
\begin{aligned}
    &  \frac{1}{q}(u_{\tau}^2+u^2+u^{-2})^{q}-\frac{2^q\gamma}{p}u^{2p}=E.
\end{aligned}
\end{equation*}
for some constant $E$.

For the case $p\le -1$ and $q\ge 1$, the argument is roughly the same as the case $q=1$. Let $\bar{E}_{\gamma}(u)=\frac{1}{q} (u^2+u^{-2})^{q}-\frac{2^q\gamma}{p}u^{2p}$, it follows that $\bar{E}_{\gamma}(u)$ is convex, and decreasing for $u\in (0, u_{\gamma})$ and increasing for $u\in (u_{\gamma},+\infty)$, where $u_{\gamma}$ satisfies $\bar{E}_{\gamma}'(u_{\gamma})=0$. Moreover, $\bar{E}_{\gamma}(u)$ has the minimum value $\bar{E}_{\gamma}^{*}=\bar{E}_{\gamma}(u_\gamma)$, and $\bar{E}_{\gamma}(u)$ tends to infinity as $u$ tends to zero or infinity.

For any $E>\bar{E}_{\gamma}^{*}$, we define the period function
\begin{equation}\label{6.5}
    \Theta_{p,q}(\gamma,E):=\int_{u_{-}}^{u_{+}}\frac{du}{u_{\tau}}
    =\int_{u_{-}}^{u_{+}}\frac{du}{\sqrt{\left(q E+\frac{2^q q\gamma}{p}u^{2p}\right)^{\frac{1}{q}}-u^2-u^{-2}}},
\end{equation}
where $u_{-}$ and $u_{+}$ satisfy $\bar{E}_{\gamma}(u_{-})=\bar{E}_{\gamma}(u_{+})=E$. Since $u(\tau)$ is $\pi$-periodic, we obtain

\begin{lem}\label{Lem 6.2}
\begin{enumerate}[(i)]
    \item If $\frac{\pi}{4}<\Theta_{p,q}(\gamma,E)<\frac{\pi}{2}$ for any $E>\bar{E}_{\gamma}^*$, then the solution to (\ref{6.1}) must be constant. Moreover, the solution to (\ref{6.1}) is unique.
    
    \item If $E>\bar{E}_{\gamma}^*$ and $\Theta_{p,q}(\gamma,E)=\frac{\pi}{2m}$ for some integer $m$, then there exists a $\frac{2\pi}{m}$-periodic non-constant solution to (\ref{6.1}).
\end{enumerate}
\end{lem}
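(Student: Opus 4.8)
The plan is to closely follow the reasoning behind Lemma \ref{Lem 2.2}, since the weighted setting differs only in the precise shape of the first integral and of the energy $\bar{E}_\gamma$. The governing fact, already recorded above, is that $\varphi$ solves \eqref{6.1} and is a positive $2\pi$-periodic function of $\theta$ exactly when $u(\tau)=\sqrt{\varphi(2\tau)}$ is a positive $\pi$-periodic solution of \eqref{6.4}; by the first integral $\frac1q(u_\tau^2+u^2+u^{-2})^q-\frac{2^q\gamma}{p}u^{2p}=E$, such a non-constant $u$ oscillates monotonically between its consecutive extrema $u_-<u_+$, the two roots of $\bar{E}_\gamma(u)=E$. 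I would first extract the single period identity that drives everything: a non-constant $\pi$-periodic $u$ has an even number $2m$ of critical points in $[0,\pi)$, alternately equal to $u_-$ and $u_+$, and by the definition \eqref{6.5} the $\tau$-time between two consecutive critical points is $\Theta_{p,q}(\gamma,E)$; summing over one period gives $2m\,\Theta_{p,q}(\gamma,E)=\pi$, that is $\Theta_{p,q}(\gamma,E)=\frac{\pi}{2m}$ for some positive integer $m$.

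For part (i) I would argue by contradiction. A non-constant solution would have energy $E>\bar{E}_\gamma^*$ together with $\Theta_{p,q}(\gamma,E)=\frac{\pi}{2m}$ for some $m\ge 1$; but $m=1$ forces $\Theta_{p,q}=\frac{\pi}{2}$ and $m\ge 2$ forces $\Theta_{p,q}\le\frac{\pi}{4}$, both excluded by the hypothesis $\frac{\pi}{4}<\Theta_{p,q}(\gamma,E)<\frac{\pi}{2}$. Hence every solution is constant. For uniqueness I would note that a constant solution $\varphi\equiv c$ corresponds to $u\equiv u_0$ with $u_0^{1-2p}(u_0^2+u_0^{-2})^{q-1}(u_0-u_0^{-3})=2^q\gamma$, which is exactly $\bar{E}_\gamma'(u_0)=0$; since $\bar{E}_\gamma$ is strictly convex with a single critical point $u_\gamma$, the constant is unique and equals the value displayed in Theorem \ref{Thm1.3}.

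For part (ii) I would run the construction in reverse: given $E>\bar{E}_\gamma^*$ with $\Theta_{p,q}(\gamma,E)=\frac{\pi}{2m}$, solve \eqref{6.4} with turning-point data $u(0)=u_-,\ u_\tau(0)=0$. The resulting $u$ rises from $u_-$ to $u_+$ in $\tau$-time $\Theta_{p,q}$ and, by the $\tau$-reflection symmetry of the first integral, falls back in the same time, so it is $\tfrac{\pi}{m}$-periodic; then $\varphi(\theta)=u(\theta/2)^2$ is a non-constant $\tfrac{2\pi}{m}$-periodic solution of \eqref{6.1}. The one delicate point, and the main obstacle I anticipate, is smoothness of $u$ at the turning points, where $u_\tau=0$ makes the integrand of \eqref{6.5} singular. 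I would resolve this by observing that the extrema are non-degenerate, $\bar{E}_\gamma'(u_\pm)\neq 0$ because $u_\pm\neq u_\gamma$, so the defining integral converges and standard ODE theory continues $u$ smoothly and evenly across each turning point, producing a genuine smooth periodic solution.
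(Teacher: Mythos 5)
Your proposal is correct and takes essentially the same route as the paper, which states Lemma \ref{Lem 6.2} with no separate proof, appealing (``Since $u(\tau)$ is $\pi$-periodic, we obtain'') to exactly the period-quantization argument behind Lemma \ref{Lem 2.2}: a non-constant $\pi$-periodic solution forces $\Theta_{p,q}(\gamma,E)=\frac{\pi}{2m}$, giving part (i) by exclusion and part (ii) by quadrature. The details you supply beyond the paper --- identifying the constant solutions with the unique critical point of the strictly convex $\bar{E}_{\gamma}$, and handling the turning points via $\bar{E}_{\gamma}'(u_{\pm})\neq 0$ (equivalently $u_{\tau\tau}\neq 0$ there, so the reflection continuation is smooth and the improper integral in (\ref{6.5}) converges) --- are precisely the standard facts the paper leaves implicit, and they are all verified correctly.
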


We reparameterize $\Theta_{p,q}$. Let $r=\frac{u_{+}}{u_{-}}>1$ and $\alpha=\frac{1}{u_{+}u_{-}}$. It follows from $\bar{E}_{\gamma}(u_{\pm})=E$ that
\begin{equation}\label{6.6}
    -\frac{2^q q\gamma}{p}=\frac{(u_{+}^2+u_{+}^{-2})^q-(u_{-}^2+u_{-}^{-2})^q}{u_{-}^{2p}-u_{+}^{2p}}
    =\frac{(r^2+\alpha^2)^q-(1+\alpha^2 r^2)^q}{(\alpha r)^{q-p} (1-r^{2p})},
\end{equation}
and
\begin{equation}\label{6.7}
    q E=\frac{u_{-}^{2p}(u_{+}^2+u_{+}^{-2})^q-u_{+}^{2p}(u_{-}^2+u_{-}^{-2})^q}{u_{-}^{2p}- u_{+}^{2p}}
    =\frac{(r^2+\alpha^2)^q-r^{2p} (1+\alpha^2 r^2)^q}{(\alpha r)^{q} (1-r^{2p})}.
\end{equation}
Since $p\le -1,\ q\ge 1$ and (\ref{6.6}), we can conclude that $\gamma>0$ if and only if $0<\alpha<1$. Making the change of variable $s=\frac{u}{u_{-}}$, we define
\begin{equation}\label{6.8}
    \Theta_{p,q}\{\alpha,r\}:=\Theta_{p,q}(\gamma(\alpha,r),E(\alpha,r))
    =\int_{1}^r \frac{ds}{\sqrt{F_{p,q}(s,\alpha,r)}},
\end{equation}
where
\begin{equation}\label{6.9}
\begin{split}
    F_{p,q}(s,\alpha,r)
    &=\left[\frac{1-s^{2p}}{1-r^{2p}}(r^2+\alpha^{2})^q +\frac{s^{2p}-r^{2p}}{1-r^{2p}}(1+\alpha^{2}r^2)^q\right]^{\frac{1}{q}}-s^2-\alpha^2 r^2 s^{-2}
\end{split}
\end{equation}
for $(\alpha,r)$ satisfying $0<\alpha<1<r$.


\subsection{Properties of $\Theta_{p,q}$} $\ $

In this subsection, we assume $p\le -1$ and $q\ge 1$. Denote $\bar{D}_1=\{(\gamma,E):\ \gamma>0,\ E>\bar{E}_{\gamma}^{*}\}$ and $\bar{D}_2=\{(\alpha,r):\ 0<\alpha<1,\ r>1\}$. Since the proof of the following properties of $\Theta_{p,q}$ follows similarly, we briefly present it. 

\begin{thm}\label{Thm6.3}
$\Theta_{p,q}(\cdot,\cdot)$ and $\Theta_{p,q}\{\cdot,\cdot\}$ are continuous on $\bar{D}_1$ and $\bar{D}_2$, respectively. $\Theta_{p,q}$ is continuous with respect to $p$ and $q$, and has following limiting values:
\begin{align}
    & \label{6.10}
    \lim\limits_{p\to -\infty}\Theta_{p,q}\{\alpha,r\}=\arccos\sqrt{\frac{1-\alpha^2}{r^2-\alpha^2}},\\
    & \label{6.11}
    \lim\limits_{q\to +\infty}\Theta_{p,q}\{\alpha,r\}=\arccos\sqrt{\frac{1-\alpha^2}{r^2-\alpha^2}},\\
    & \label{6.12}
    \lim\limits_{r\to 1}\Theta_{p,q}\{\alpha,r\}=\frac{\pi}{\sqrt{(2-2p)+(2+2p)\alpha^2+(2q-2)\frac{(1-\alpha^2)^2}{1+\alpha^2} }},\\
    & \label{6.13}
    \lim\limits_{r\to +\infty}\Theta_{p,q}\{\alpha,r\}=\frac{\pi}{2},\\
    & \label{6.14}
    \lim\limits_{\gamma\to 0}\Theta_{p,q}(\gamma,E)=\frac{\pi}{2},\\
    & \label{6.15}
    \lim\limits_{E\to +\infty}\Theta_{p,q}(\gamma,E)=\frac{\pi}{2},\\
    & \label{6.16}
    \lim\limits_{E\to (\bar{E}_{\gamma}^*)^+}\Theta_{p,q}(\gamma,E)=\frac{\pi}{\sqrt{ (2-2p)+(2+2p) u_{\gamma}^{-4}+(2q-2)\frac{(1-u_{\gamma}^{-4})^2}{1+u_{\gamma}^{-4}}}},
\end{align}
where $u_{\gamma}\in(1,\infty)$ is given by
\begin{equation}\label{6.17}
    u_{\gamma}^{2-2p}-u_{\gamma}^{-2-2p}=2^q \gamma ( u_{\gamma}^2+u_{\gamma}^{-2})^{1-q}.
\end{equation}
\end{thm}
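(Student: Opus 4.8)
The plan is to follow the proof of Theorem~\ref{Thm3.1} line by line, treating $q$ as an extra parameter, since the exponent $q$ alters only two of the limit computations in an essential way, namely (\ref{6.11}) and (\ref{6.16}). Continuity of $\Theta_{p,q}(\cdot,\cdot)$ and $\Theta_{p,q}\{\cdot,\cdot\}$ on $\bar{D}_1$ and $\bar{D}_2$, together with joint continuity in $(p,q)$, follows exactly as in the $q=1$ case from continuous dependence of the integrand $F_{p,q}$ in (\ref{6.9}) on all parameters, once one removes the integrable square-root singularities at the turning points by the substitutions $u=wt+u_-$ (resp.\ $s^{\beta}=x$ with $x=\frac{r^{\beta}-1}{2}z+\frac{r^{\beta}+1}{2}$) used for (\ref{3.5}) and (\ref{3.2}). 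The limits (\ref{6.10}), (\ref{6.13}), (\ref{6.14}) and (\ref{6.15}) then transcribe directly from (\ref{3.1}), (\ref{3.3}), (\ref{3.4}) and (\ref{3.5}): in (\ref{6.10}) one has $s^{2p},r^{2p}\to 0$, so the bracket in (\ref{6.9}) tends to $(r^2+\alpha^2)^q$ and its $q$-th root to $r^2+\alpha^2$, recovering the integrand $[(r^2-s^2)(1-\alpha^2 s^{-2})]^{-1/2}$; in (\ref{6.13})--(\ref{6.15}) the weight $(\,\cdot\,)^{1/q}$ contributes only to lower order after the same rescalings, and the leading behaviour is the standard $\int_0^1 (1-t^2)^{-1/2}\,dt=\frac{\pi}{2}$.

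The limits (\ref{6.12}) and (\ref{6.16}) both describe the orbit collapsing to the equilibrium $u_\gamma$ (indeed $r\to 1$ forces $\alpha\to u_\gamma^{-2}$, so the two formulas agree under $\alpha^2=u_\gamma^{-4}$), and here the $q$-dependence is genuine. For (\ref{6.12}) I would Taylor expand $F_{p,q}$ in $(r-1)$ after the substitution $s^{\beta}=x$ exactly as for (\ref{3.2}); expanding the $q$-th root $[\lambda(r^2+\alpha^2)^q+(1-\lambda)(1+\alpha^2 r^2)^q]^{1/q}$ with $\lambda=\frac{1-s^{2p}}{1-r^{2p}}$ produces the extra summand $(2q-2)\frac{(1-\alpha^2)^2}{1+\alpha^2}$ in the leading $(1-z^2)$-coefficient, which is precisely the new term in the denominator of (\ref{6.12}). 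For (\ref{6.16}) I would invoke Lemma~\ref{Lem3.2}, but with a twist: because $E$ enters the integrand $\Psi(u,E)=\big(qE+\frac{2^q q\gamma}{p}u^{2p}\big)^{1/q}-u^2-u^{-2}$ nonlinearly, it is not literally of the form $f(u)+c$. Writing $c=E-\bar{E}_\gamma^*$ and expanding near the double root gives $\Psi(u,E)=b\,c-a\,(u-u_\gamma)^2+o(\,\cdot\,)$, where $a=-\frac{1}{2}\partial_u^2\Psi(u_\gamma,\bar{E}_\gamma^*)$ and $b=\partial_E\Psi(u_\gamma,\bar{E}_\gamma^*)=(u_\gamma^2+u_\gamma^{-2})^{1-q}>0$. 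The sandwiching argument of Lemma~\ref{Lem3.2} then applies verbatim after dividing through by $b$, and crucially $b$ cancels, so the limit is $\pi/\sqrt{a}$. A direct computation using (\ref{6.17}) to eliminate $\gamma$ yields
\[
    a=(2-2p)+(2+2p)u_\gamma^{-4}+(2q-2)\frac{(1-u_\gamma^{-4})^2}{1+u_\gamma^{-4}},
\]
which is (\ref{6.16}); note $a>0$ since $(2-2p)+(2+2p)u_\gamma^{-4}>4$ for $p\le -1$, $u_\gamma>1$.

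The genuinely new ingredient is (\ref{6.11}), where $p$ is fixed and $q\to+\infty$. Here I would use the dominance $r^2+\alpha^2>1+\alpha^2 r^2$, valid because $(r^2-1)(1-\alpha^2)>0$ for $0<\alpha<1<r$: writing the bracket in (\ref{6.9}) as $\lambda(r^2+\alpha^2)^q+(1-\lambda)(1+\alpha^2 r^2)^q$ with $\lambda\in(0,1)$ independent of $q$, the first term dominates, its $q$-th root tends pointwise to $r^2+\alpha^2$, and hence $F_{p,q}(s,\alpha,r)\to(r^2-s^2)(1-\alpha^2 s^{-2})$ for each $s\in(1,r)$. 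Passing to the limit under the integral sign then gives the same value $\arccos\sqrt{\frac{1-\alpha^2}{r^2-\alpha^2}}$ as in (\ref{6.10}).

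The main obstacle is (\ref{6.16}): Lemma~\ref{Lem3.2} is stated only for integrands $f(u)+c$ with $f$ independent of $c$, whereas $E$ appears nonlinearly, so the perturbation $\Psi(u,E)-\Psi(u,\bar{E}_\gamma^*)$ must be controlled uniformly on the shrinking interval $[u_-,u_+]$. The cleanest remedy is to rerun the comparison estimate of Lemma~\ref{Lem3.2} directly on $\Psi(u,E)/b$, using that $\partial_E\Psi(u,E)=\big(qE+\frac{2^q q\gamma}{p}u^{2p}\big)^{1/q-1}\to b$ uniformly as $(u,E)\to(u_\gamma,\bar{E}_\gamma^*)$; the curvature computation of $a$ is routine but delicate, as it produces the exact coefficients in both (\ref{6.12}) and (\ref{6.16}). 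A secondary technical point is justifying the interchange of limit and integral in (\ref{6.11}) and in the continuity statement, for which a $q$-uniform integrable majorant near the turning points, e.g.\ a lower bound $F_{p,q}\ge \mathrm{const}\cdot(s-1)(r-s)$, suffices.
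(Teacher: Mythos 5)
Your proposal is correct and, for everything except (\ref{6.16}), follows essentially the same route as the paper: the paper also proves only (\ref{6.12}) and (\ref{6.16}) explicitly (using exactly your substitution $s^{\beta}=x=\frac{r^{\beta}-1}{2}z+\frac{r^{\beta}+1}{2}$ and a Taylor expansion in $r-1$, which produces the extra summand $(2q-2)\frac{(1-\alpha^2)^2}{1+\alpha^2}$), and disposes of (\ref{6.10}), (\ref{6.11}), (\ref{6.13})--(\ref{6.15}) and the continuity claims as ``derived similarly to Theorem \ref{Thm3.1}''; your dominance argument for $q\to+\infty$ and the $q$-uniform majorant (which in fact comes for free from the monotonicity of $F_{p,q}$ in $q$ furnished by Lemma \ref{Lem6.6}) are more detailed than what the paper records. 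The one genuine divergence is (\ref{6.16}): you correctly identify that Lemma \ref{Lem3.2} does not literally apply since $E$ enters $f(u,E)=\bigl(qE+\frac{2^q q\gamma}{p}u^{2p}\bigr)^{1/q}-u^2-u^{-2}$ nonlinearly, and you repair this by rerunning the sandwich on $\Psi/b$ with $b=\partial_E\Psi(u_\gamma,\bar{E}_\gamma^*)=(u_\gamma^2+u_\gamma^{-2})^{1-q}$ cancelling from the limit. The paper sidesteps this entirely by invoking Lemma \ref{Lem6.4} from \cites{PP15,Per10}, which is precisely the two-parameter statement you need: for $f(u,E)$ with a nondegenerate double root at $(u_0,E_0)$ and weight $g$, the limit is $g(u_0,E_0)\pi/\sqrt{A}$ with $A=-\frac{1}{2}\partial_u^2 f(u_0,E_0)$, and indeed no $\partial_E f$ appears in the conclusion, confirming your cancellation. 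So the ``main obstacle'' you flag is real but already packaged in the literature; your reconstruction buys self-containedness at the price of having to make the comparison uniform in $E$ (your sketch is sound, but the quadratic comparison functions should be anchored at the $E$-dependent roots $u_{\pm}(E)$, as in the paper's proof of Lemma \ref{Lem3.2}, rather than at the roots of $f_0+cb$). Your value of $a$ agrees with the paper's computation, which uses (\ref{6.17}) to reduce $A=1+3u_{\gamma}^{-4}+(1-2p)(1-u_{\gamma}^{-4})+(2q-2)\frac{(u_{\gamma}^{2}-u_{\gamma}^{-2})^2}{u_{\gamma}^{2}(u_{\gamma}^{2}+u_{\gamma}^{-2})}$ to the stated form. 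One cosmetic slip: at $p=-1$ one has $(2-2p)+(2+2p)u_{\gamma}^{-4}=4$ exactly, not $>4$; positivity of $a$ is of course unaffected.
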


We will use the following lemma from \cites{PP15,Per10} in the proof of (\ref{6.16})
\begin{lem}[\cites{PP15,Per10}]\label{Lem6.4}
Let $f(u,E)$ and $g(u,E)$ be smooth functions such that
$$f(u_0,E_0)=\frac{\partial f}{\partial u}(u_0,E_0)=0 \quad \text{and}\quad \frac{\partial^2 f}{\partial u^2}(u_0,E_0)=-2A<0.$$
Suppose that $u_{-}=u_{-}(E)$ and $u_{+}=u_{+}(E)$ satisfy
$$f(u_{-},E)=f(u_{+},E)=0,\quad u_{-}<u_0<u_{+},$$
with
$$f(u,E)>0\ \text{for all}\ u\in (u_{-},u_{+})\quad \text{and}\quad 
u_{-},u_{+}\to u_0\ \text{as}\ E\to (E_0)^+.$$
Then we have
\begin{align*}
    \lim\limits_{E\to (E_0)^+}\int_{u_{-}}^{u_{+}}\frac{g(u,E)du}{\sqrt{f(u,E)}}=g(u_0,E_0)\frac{\pi}{\sqrt{A}}.
\end{align*}
\end{lem}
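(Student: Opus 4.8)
The plan is to establish this as a weighted, $E$-dependent analogue of Lemma~\ref{Lem3.2}: near the nondegenerate maximum of $f(\cdot,E)$ the integrand behaves like $g(u_0,E_0)$ times the time map of an inverted parabola, and the latter contributes exactly $\pi/\sqrt{A}$ regardless of where the endpoints sit. Accordingly I would split the argument into a comparison estimate for the unweighted integral, followed by a reduction of the weight $g$ to its value at $(u_0,E_0)$.

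First I would treat $g\equiv 1$. The device is to sandwich $f(\cdot,E)$ on $[u_-,u_+]$ between downward parabolas vanishing at the same two endpoints. Fix $\delta\in(0,A)$. Since $\partial_u^2 f$ is continuous with $\partial_u^2 f(u_0,E_0)=-2A$, and since $u_\pm(E)\to u_0$ as $E\to E_0^+$, for $E$ sufficiently close to $E_0$ one has
$$-2(A+\delta)\le \partial_u^2 f(u,E)\le -2(A-\delta),\qquad u\in[u_-,u_+].$$
For a constant $m$, set $\phi_m(u)=f(u,E)-m(u_+-u)(u-u_-)$, so that $\phi_m(u_\pm)=0$ and $\phi_m''=\partial_u^2 f+2m$. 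Taking $m=A+\delta$ makes $\phi_m$ convex, hence $\phi_m\le 0$ on $[u_-,u_+]$ (a convex function lies below its chord, here the zero function); taking $m=A-\delta$ makes $\phi_m$ concave, hence $\phi_m\ge 0$. This yields
$$(A-\delta)(u_+-u)(u-u_-)\le f(u,E)\le (A+\delta)(u_+-u)(u-u_-).$$
Combining this with the elementary identity $\int_{u_-}^{u_+}\frac{du}{\sqrt{(u_+-u)(u-u_-)}}=\pi$ (via $u=\tfrac{u_++u_-}{2}+\tfrac{u_+-u_-}{2}\sin\theta$) gives
$$\frac{\pi}{\sqrt{A+\delta}}\le \int_{u_-}^{u_+}\frac{du}{\sqrt{f(u,E)}}\le \frac{\pi}{\sqrt{A-\delta}}.$$
Letting $E\to E_0^+$ and then $\delta\to 0$ shows $\int_{u_-}^{u_+}du/\sqrt{f(u,E)}\to \pi/\sqrt{A}$; in particular this integral stays bounded near $E_0$.

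Second I would restore the weight. Writing $g(u,E)=g(u_0,E_0)+\bigl[g(u,E)-g(u_0,E_0)\bigr]$, the first term contributes $g(u_0,E_0)\int_{u_-}^{u_+}du/\sqrt{f}\to g(u_0,E_0)\,\pi/\sqrt{A}$ by the previous step. For the remainder, continuity of $g$ together with $u_\pm\to u_0$ and $E\to E_0$ forces $\sup_{[u_-,u_+]}\lvert g(u,E)-g(u_0,E_0)\rvert\to 0$, so by the boundedness of $\int_{u_-}^{u_+}du/\sqrt{f}$ this contribution vanishes in the limit. Adding the two pieces gives $\lim_{E\to E_0^+}\int_{u_-}^{u_+}g(u,E)\,du/\sqrt{f(u,E)}=g(u_0,E_0)\,\pi/\sqrt{A}$, as claimed.

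The main obstacle is the uniform control underlying the first step: one must ensure that once $E$ is close enough to $E_0$, the entire interval $[u_-,u_+]$ lies in a neighborhood of $u_0$ on which $\partial_u^2 f$ is trapped within $\delta$ of $-2A$ (in particular still strictly negative, so that $u_-$ and $u_+$ are genuinely the two edges of a single hump of $f$). This is precisely where the hypotheses $f(u_0,E_0)=\partial_u f(u_0,E_0)=0$, $\partial_u^2 f(u_0,E_0)=-2A<0$ and $u_\pm(E)\to u_0$ enter; the convexity/concavity comparison then does the rest, and it notably never requires $u_0$ to be a critical point of $f(\cdot,E)$ for $E\neq E_0$, which is the feature that makes the argument robust against the $E$-dependence of $f$.
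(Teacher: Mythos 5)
Your proof is correct, but it is worth noting how it sits relative to the paper: the authors do not prove Lemma~\ref{Lem6.4} at all --- they cite \cite{PP15} and \cite{Per10} --- and the only proof they write out is for the simpler Lemma~\ref{Lem3.2} (unweighted, $f$ independent of $E$, energy entering as an additive constant $c$). Your argument differs from that in-paper proof in its comparison mechanism. The paper splits the integral at the critical point $u_0$ and, on each half-interval such as $[u_0,u_2(c)]$, compares $f+c$ with the parabola $b(u_2(c)-u)(u+u_2(c)-2u_0)$, which is symmetric about $u_0$ and pinned at one endpoint; the inequality is obtained through a first-derivative monotonicity device ($h(u)=f'(u)+2b(u-u_0)>0$), and each half contributes $\pi/(2\sqrt{b})$. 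You instead pin the comparison parabolas at \emph{both} endpoints $u_\pm$, and derive the two-sided bound $(A-\delta)(u_+-u)(u-u_-)\le f(u,E)\le(A+\delta)(u_+-u)(u-u_-)$ purely from uniform second-derivative bounds plus the chord property of convex/concave functions, using the exact identity $\int_{u_-}^{u_+}\frac{du}{\sqrt{(u_+-u)(u-u_-)}}=\pi$ in one stroke. This buys you exactly what the $E$-dependent weighted statement needs: no splitting at $u_0$, no requirement that $u_0$ be a critical point of $f(\cdot,E)$ for $E\ne E_0$, and a boundedness estimate for $\int_{u_-}^{u_+}du/\sqrt{f}$ that makes the reduction of $g$ to $g(u_0,E_0)$ (via $\sup_{[u_-,u_+]}|g(u,E)-g(u_0,E_0)|\to 0$) immediate. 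You correctly identify the one genuine point of care --- that joint continuity of $\partial_u^2 f$ together with $u_\pm(E)\to u_0$ is what lets the bound $-2(A+\delta)\le\partial_u^2 f\le-2(A-\delta)$ hold on all of $[u_-,u_+]$ for $E$ near $E_0$ --- and your hypotheses $f(u_\pm,E)=0$, $f>0$ on $(u_-,u_+)$ are all the sandwich uses, so the proof is complete and self-contained where the paper relies on a citation.
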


\begin{proof}[Proof of Theorem \ref{Thm6.3}]

We will only provide the proofs of (\ref{6.12}) and (\ref{6.16}), as the others can be derived similarly to the proof of Theorem \ref{Thm3.1}.

Let $\beta>0$, and set $s^\beta=x$, where
\begin{align*}
    x=\frac{r^\beta-1}{2}z+\frac{r^\beta+1}{2},\quad\quad z\in (-1,1).
\end{align*}
Then the change of variables formula implies that
\begin{align*}
    \Theta_{p,q}\{\alpha,r\}=\int_{-1}^1\frac{dz}{\sqrt{J_{p,q}(\alpha,r,\beta,z)}},
\end{align*}
where
\begin{equation*}
\begin{split}
    J_{p,q}(\alpha,r,\beta,z)
    &=\frac{4\beta^2}{(r^\beta-1)^2} x^{\frac{2(\beta-1)}{\beta}} \left[\left(\frac{1-x^{\frac{2p}{\beta}}}{1-r^{2p}}(r^2+\alpha^2)^q+\frac{x^{\frac{2p}{\beta}}-r^{2p}}{1-r^{2p}}(1+\alpha^2 r^2)^q\right)^{\frac{1}{q}}\right.\\
    &\quad\quad \left. -x^{\frac{2}{\beta}}-\alpha^2 r^2 x^{-\frac{2}{\beta}}\right].
\end{split}
\end{equation*}
When $r$ is close to $1$, a direct computation using Taylor expansions yields
\begin{equation*}
\begin{split}
    J_{p,q}(\alpha,r,\beta,z)
    &=\left((2-2p)+(2+2p)\alpha^2+(2q-2)\frac{(1-\alpha^2)^2}{1+\alpha^2} \right) (1-z^2) \\
    &\quad \cdot \left[1+\frac{z}{2}\left(\beta-\frac{2(2q-p)}{3}-\frac{4\alpha^2 \mu_{p,q}(\alpha,z)}{3}\right)(r-1)+o(r-1)\right],
\end{split}
\end{equation*}
where
\begin{align*}
    \mu_{p,q}(\alpha,z)=\frac{(p+1)-(q-1)\left(\frac{4(2+\alpha^2)}{(1+\alpha^2)^2}-(1+p)\frac{3-\alpha^2}{1+\alpha^2}+(q-1)\frac{2(1-\alpha^2)^2}{1+\alpha^2}\right)}{(1-p)+(1+p)\alpha^2+(q-1)\frac{(1-\alpha^2)^2}{1+\alpha^2}}.
\end{align*}
Thus it follows that
\begin{align*}
    \lim\limits_{r\to 1}\Theta_{p,q}\{\alpha,r\}
    &=\int_{-1}^1\frac{dz}{\sqrt{\left((2-2p)+(2+2p)\alpha^2+(2q-2)\frac{(1-\alpha^2)^2}{1+\alpha^2}\right)(1-z^2)}}\\
    &=\frac{\pi}{\sqrt{(2-2p)+(2+2p)\alpha^2+(2q-2)\frac{(1-\alpha^2)^2}{1+\alpha^2}}}.
\end{align*}



On the other hand, we choose $g(u,E)\equiv 1$ and $f(u,E)=\left(q E+\frac{2^q q\gamma}{p}u^{2p}\right)^{\frac{1}{q}}-u^2-u^{-2}$ in Lemma \ref{Lem6.4}, then $u_0=u_{\gamma}$ and $E_0=\bar{E}_{\gamma}^*$. It follows from $f(u_{\gamma},\bar{E}_{\gamma}^*)=\frac{\partial f}{\partial u}(u_{\gamma},\bar{E}_{\gamma}^*)=0$ that $u_{\gamma}$ satisfies (\ref{6.17}). Moreover, we calculate 
\begin{align*}
     & A =-\frac{1}{2}\frac{\partial^2 f}{\partial u^2}(u_{\gamma},\bar{E}_{\gamma}^*)\\
    = & 1+3u_{\gamma}^{-4}-\frac{2^q \gamma}{q}(2p-1)u_{\gamma}^{2p-2}\left(q E+\frac{2^q q\gamma}{p}u^{2p}\right)^{\frac{1-q}{q}}+\frac{2^{2q+1}(q-1)\gamma^2}{q^2}u_{\gamma}^{4p-2}\left(q E+\frac{2^q q\gamma}{p}u^{2p}\right)^{\frac{1-2q}{q}}\\
    = & 1+3u_{\gamma}^{-4}+(1-2p)(1-u_{\gamma}^{-4})+(2q-2)\frac{(u_{\gamma}^{2}-u_{\gamma}^{-2})^2}{u_{\gamma}^{2} (u_{\gamma}^{2}+u_{\gamma}^{-2})},
\end{align*}
where we used the definition of $u_{\gamma}$. Using Lemma \ref{Lem6.4}, we obtain
\begin{align*}
    \lim\limits_{E\to (\bar{E}_{\gamma}^*)^+}\Theta_{p,q}(\gamma,E)
    =\frac{\pi}{\sqrt{A}}
    =\frac{\pi}{\sqrt{ (2-2p)+(2+2p) u_{\gamma}^{-4}+(2q-2)\frac{(1-u_{\gamma}^{-4})^2}{1+u_{\gamma}^{-4}}}}.
\end{align*}

\end{proof}

\begin{rem}
  Since $r$ tends to 1 if and only if $E$ tends to $\bar{E}_{\gamma}^{*}$, and at the same time $\alpha$ tends to $u_{\gamma}^{-2}$, then (\ref{6.12}) and (\ref{6.16}) are essentially equivalent.
\end{rem}


Since (\ref{6.8}), (\ref{6.9}) and $(r^2-1)(1-\alpha^2)>0$, Lemma \ref{Lem4.1} implies the monotonicity in $p$.
\begin{thm}\label{Thm6.5}
Let $p\le -1$ and $q\ge 1$. $\Theta_{p,q}\{\alpha,r\}$ is monotone increasing in $p$.
\end{thm}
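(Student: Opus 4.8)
The plan is to reduce the monotonicity of the integral $\Theta_{p,q}\{\alpha,r\}$ to a pointwise monotonicity of its integrand, exactly as in the unweighted case (Theorem~\ref{Thm4.2}). Since the limits of integration in (\ref{6.8}) are $1$ and $r$, neither of which depends on $p$, it suffices to prove that for each fixed $s\in(1,r)$ and each fixed $(\alpha,r)\in\bar{D}_2$ the map $p\mapsto F_{p,q}(s,\alpha,r)$ is monotone decreasing on $(-\infty,-1]$. As $F_{p,q}>0$ throughout $(1,r)$ by construction, this makes the integrand $1/\sqrt{F_{p,q}}$ monotone increasing in $p$, and hence so is $\Theta_{p,q}\{\alpha,r\}$.

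The key step is to recognize the bracket in (\ref{6.9}) as a convex combination controlled by the function $H$ of Lemma~\ref{Lem4.1}. Setting $H(p)=\frac{1-s^{2p}}{1-r^{2p}}=\frac{s^{2p}-1}{r^{2p}-1}$ and observing that the two coefficients in (\ref{6.9}) sum to $1$, I would rewrite
\begin{equation*}
    F_{p,q}(s,\alpha,r)=\Bigl[H(p)(r^2+\alpha^2)^q+\bigl(1-H(p)\bigr)(1+\alpha^2 r^2)^q\Bigr]^{\frac{1}{q}}-s^2-\alpha^2 r^2 s^{-2}.
\end{equation*}
Because $(r^2+\alpha^2)-(1+\alpha^2 r^2)=(r^2-1)(1-\alpha^2)>0$ for $0<\alpha<1<r$, and $q\ge 1>0$, the base $(r^2+\alpha^2)^q$ strictly exceeds $(1+\alpha^2 r^2)^q$. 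Thus the bracket is a weighted average of a larger and a smaller positive quantity, with weight $H(p)$ placed on the larger one.

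By Lemma~\ref{Lem4.1}, $H$ is strictly decreasing in $p$. Increasing $p$ therefore shifts weight from the larger term $(r^2+\alpha^2)^q$ to the smaller term $(1+\alpha^2 r^2)^q$, so the bracketed quantity strictly decreases; concretely, its $p$-derivative equals $H'(p)\bigl[(r^2+\alpha^2)^q-(1+\alpha^2 r^2)^q\bigr]<0$. Since $t\mapsto t^{1/q}$ is increasing and the subtracted terms $s^2+\alpha^2 r^2 s^{-2}$ are independent of $p$, it follows that $F_{p,q}(s,\alpha,r)$ decreases in $p$, which completes the reduction of the first paragraph.

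I expect no serious obstacle: the whole argument is the weighted analogue of how Lemma~\ref{Lem4.1} produced Theorem~\ref{Thm4.2} when $q=1$, and once the convex-combination reformulation is in place Lemma~\ref{Lem4.1} applies verbatim. The only points requiring care are the sign check $(r^2+\alpha^2)>(1+\alpha^2 r^2)$ and the fact that both the $q$-th power and the $1/q$-th root preserve the relevant orderings; each uses nothing beyond $q\ge1>0$ and positivity of the two bases.
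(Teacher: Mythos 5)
Your proposal is correct and is essentially the paper's own argument: the paper proves Theorem \ref{Thm6.5} in one line by combining (\ref{6.8}), (\ref{6.9}) and $(r^2+\alpha^2)-(1+\alpha^2 r^2)=(r^2-1)(1-\alpha^2)>0$ with Lemma \ref{Lem4.1}, exactly the convex-combination structure $F_{p,q}=\bigl[H(p)(r^2+\alpha^2)^q+(1-H(p))(1+\alpha^2 r^2)^q\bigr]^{1/q}-s^2-\alpha^2 r^2 s^{-2}$ that you identify. Your write-up simply makes explicit the sign checks and the pointwise-monotonicity-of-the-integrand reduction that the paper leaves implicit, in parallel with how Lemma \ref{Lem4.1} yielded Theorem \ref{Thm4.2} in the case $q=1$.
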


$\ $


As for the monotonicity in $q$, we need a lemma. Given $A,B>1$ and $0<\lambda<1$, denote
\begin{equation*}
    K(q)=(\lambda A^q+(1-\lambda) B^q)^{\frac{1}{q}}.
\end{equation*}

\begin{lem}\label{Lem6.6}
$K(q)$ is monotone increasing in $q>0$.
\end{lem}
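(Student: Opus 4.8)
The plan is to show that $\log K(q)$ is monotone increasing in $q$, which is equivalent to $K$ itself being increasing since $\log$ is increasing. Writing $L(q) = \log K(q) = \frac{1}{q}\log\bigl(\lambda A^q + (1-\lambda)B^q\bigr)$, I would differentiate and aim to prove $L'(q) \geq 0$. This is a natural approach because $K(q)$ is, up to the probability weights $\lambda$ and $1-\lambda$, precisely a (discrete) $q$-th power mean of the two values $A$ and $B$, and the monotonicity of power means in their order is a classical fact. The point is that $K(q)$ interpolates between the weighted geometric mean $A^{\lambda}B^{1-\lambda}$ (as $q \to 0$) and the maximum $\max\{A,B\}$ (as $q \to \infty$), and power means increase with the exponent.

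The concrete computation I would carry out: set $P = \lambda A^q + (1-\lambda)B^q$, so $L(q) = \frac{1}{q}\log P$. Then
\begin{equation*}
    L'(q) = -\frac{1}{q^2}\log P + \frac{1}{q}\cdot\frac{\lambda A^q \log A + (1-\lambda)B^q\log B}{P}.
\end{equation*}
Multiplying through by $q^2 P > 0$, the sign of $L'(q)$ matches that of
\begin{equation*}
    q\bigl(\lambda A^q \log A + (1-\lambda)B^q \log B\bigr) - P\log P.
\end{equation*}
I would then interpret this as a statement about the convexity of $t \mapsto t\log t$: writing $\log A^q = q\log A$ and similarly for $B$, the bracket becomes $\lambda A^q\log A^q + (1-\lambda)B^q\log B^q$, while $P\log P = \bigl(\lambda A^q + (1-\lambda)B^q\bigr)\log\bigl(\lambda A^q + (1-\lambda)B^q\bigr)$. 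So the quantity above is exactly
\begin{equation*}
    \lambda\,\phi(A^q) + (1-\lambda)\,\phi(B^q) - \phi\bigl(\lambda A^q + (1-\lambda)B^q\bigr),
\end{equation*}
where $\phi(t) = t\log t$ is convex on $(0,\infty)$. By Jensen's inequality this expression is nonnegative, with equality iff $A^q = B^q$, i.e. $A = B$. Hence $L'(q) \geq 0$ and $K$ is increasing.

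The main obstacle, such as it is, is purely presentational rather than mathematical: one must recognize that after clearing the positive factor $q^2 P$, the resulting expression collapses into the Jensen gap for $\phi(t)=t\log t$. An alternative, if one prefers to avoid identifying the convex function, is to note directly that $K(q)$ is a weighted power mean and cite the standard monotonicity of such means; but since the paper is self-contained in its use of elementary convexity arguments (as in Lemma \ref{Lem4.1}), the Jensen route is cleaner and keeps the proof short. I would close by remarking that the strict inequality when $A\neq B$ gives strict monotonicity, though for the application only the non-strict statement is needed.
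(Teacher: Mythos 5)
Your proposal is correct and follows essentially the same route as the paper: differentiating $\log K(q)$, clearing the positive factor $q^2\bigl(\lambda A^q+(1-\lambda)B^q\bigr)$, and recognizing the resulting expression as the Jensen gap for the convex function $t\mapsto t\log t$, which is exactly the paper's computation of $K'(q)/K(q)$. Your additional remark on the equality case $A=B$ is a harmless refinement; nothing is missing.
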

\begin{proof}
Taking the derivative of $K(q)$ yields
\begin{align*}
    \frac{K'(q)}{K(q)}
    &=\frac{1}{q^2(\lambda A^q+(1-\lambda) B^q)}\left[\lambda A^q\log A^q +(1-\lambda) B^q\log B^q \right.\\
    &\quad\quad \left. -(\lambda A^q+(1-\lambda) B^q)\log(\lambda A^q+(1-\lambda) B^q)\right].
\end{align*}
It follows from the convexity of $k(t)=t\log t$ that $K'(q)>0$. The proof is complete.
\end{proof}

Choosing $A=r^2+\alpha^{2},\ B=1+r^2\alpha^{2}$ and $\lambda=\frac{s^{2p}-1}{r^{2p}-1}$, and using Lemma \ref{Lem6.6}, we obtain 
\begin{thm}\label{Thm6.7}
Let $p\le -1$ and $q\ge 1$. $\Theta_{p,q}\{\alpha,r\}$ is monotone decreasing in $q$.
\end{thm}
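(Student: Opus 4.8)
The plan is to exploit the decomposition of $F_{p,q}$ already displayed in \eqref{6.9}: the only dependence on $q$ sits in the bracketed term raised to the power $\tfrac{1}{q}$, whereas the remaining part $s^2+\alpha^2 r^2 s^{-2}$ does not involve $q$ at all. Thus it suffices to show that this bracketed expression is monotone increasing in $q$ for each fixed $(s,\alpha,r)$ with $1<s<r$. Granting this, $F_{p,q}(s,\alpha,r)$ increases in $q$, the integrand $F_{p,q}^{-1/2}$ decreases in $q$, and since the limits of integration $[1,r]$ in \eqref{6.8} do not depend on $q$, the integral $\Theta_{p,q}\{\alpha,r\}$ decreases in $q$.

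First I would recognize the bracket as an instance of the function $K(q)$ from Lemma \ref{Lem6.6}. Setting $A=r^2+\alpha^2$, $B=1+\alpha^2 r^2$, and $\lambda=\frac{s^{2p}-1}{r^{2p}-1}$, a short rearrangement using the identity $1-\lambda=\frac{s^{2p}-r^{2p}}{1-r^{2p}}$ (equivalently $\lambda=\frac{1-s^{2p}}{1-r^{2p}}$) shows that the bracket equals $\lambda A^q+(1-\lambda)B^q$, so the bracketed-to-the-$\tfrac{1}{q}$ expression is exactly $K(q)$.

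Next I would verify that the hypotheses of Lemma \ref{Lem6.6} hold throughout this range. Since $r>1$ and $\alpha>0$, we have $A=r^2+\alpha^2>1$ and $B=1+\alpha^2 r^2>1$. For $\lambda$, note that $1<s<r$ together with $p<0$ gives $0<r^{2p}<s^{2p}<1$; hence $s^{2p}-1$ and $r^{2p}-1$ are both negative with $|s^{2p}-1|<|r^{2p}-1|$, so $\lambda\in(0,1)$ (this is precisely the quantity $H(p)$ appearing in Lemma \ref{Lem4.1}). Lemma \ref{Lem6.6} then yields that $K(q)$, and therefore the bracket, is strictly increasing in $q>0$.

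There is essentially no hard obstacle here, since Lemma \ref{Lem6.6} carries the analytic weight; the only point requiring care is the passage from pointwise monotonicity of the integrand to monotonicity of the integral. The integrand $F_{p,q}^{-1/2}$ has integrable singularities at the endpoints $s=1$ and $s=r$, where $F_{p,q}$ vanishes, but since the monotonicity in $q$ holds pointwise on the open interval $(1,r)$ and the fixed integration limits do not move with $q$, comparing the integrals for two values $q_1<q_2$ is immediate. This establishes that $\Theta_{p,q}\{\alpha,r\}$ is monotone decreasing in $q$.
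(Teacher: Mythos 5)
Your proof is correct and takes essentially the same route as the paper: the paper's proof consists precisely of choosing $A=r^2+\alpha^{2}$, $B=1+r^2\alpha^{2}$, $\lambda=\frac{s^{2p}-1}{r^{2p}-1}$ in Lemma \ref{Lem6.6} to conclude that the bracketed term in (\ref{6.9}) is increasing in $q$, hence $F_{p,q}$ is increasing and $\Theta_{p,q}\{\alpha,r\}$ is decreasing in $q$. Your extra verifications --- that $\lambda\in(0,1)$ for $p\le -1$ and $1<s<r$, that $A,B>1$, and that the endpoint singularities of $F_{p,q}^{-1/2}$ do not obstruct the pointwise comparison of the integrals --- are details the paper leaves implicit.
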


$\ $




Next, we prove the monotonicity in $\alpha$. 
\begin{lem}\label{Lem6.8}
Let $p<-1$, $q\ge 1$ and $0<\alpha<1$. $F_{p,q}(s,\alpha,r)$ is monotone decreasing in $\alpha$.
\end{lem}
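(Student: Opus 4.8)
The goal is to show that the function
\[
F_{p,q}(s,\alpha,r)=\left[\frac{1-s^{2p}}{1-r^{2p}}(r^2+\alpha^{2})^q +\frac{s^{2p}-r^{2p}}{1-r^{2p}}(1+\alpha^{2}r^2)^q\right]^{\frac{1}{q}}-s^2-\alpha^2 r^2 s^{-2}
\]
is strictly decreasing in $\alpha\in(0,1)$ for fixed $p<-1$, $q\ge 1$ and $1<s<r$. The plan is to differentiate $F_{p,q}$ with respect to $\alpha$ and show the derivative is negative. Writing $A=r^2+\alpha^2$, $B=1+\alpha^2 r^2$ and $\lambda=\frac{1-s^{2p}}{1-r^{2p}}\in(0,1)$, the bracketed quantity is $(\lambda A^q+(1-\lambda)B^q)^{1/q}$, exactly the expression $K(q)$ from Lemma \ref{Lem6.6} but now viewed as a function of $\alpha$. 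Since $\partial_\alpha A=\partial_\alpha B=2\alpha$, the chain rule gives
\[
\frac{\partial}{\partial\alpha}\left(\lambda A^q+(1-\lambda)B^q\right)^{\frac{1}{q}}
=2\alpha\left(\lambda A^q+(1-\lambda)B^q\right)^{\frac{1}{q}-1}\left(\lambda A^{q-1}+(1-\lambda)B^{q-1}\right),
\]
while $\partial_\alpha\left(s^2+\alpha^2 r^2 s^{-2}\right)=2\alpha r^2 s^{-2}$. Thus $\frac{\partial F_{p,q}}{\partial\alpha}<0$ reduces to the inequality
\[
\left(\lambda A^q+(1-\lambda)B^q\right)^{\frac{1}{q}-1}\left(\lambda A^{q-1}+(1-\lambda)B^{q-1}\right)< \frac{r^2}{s^2}.
\]

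First I would dispose of the case $q=1$ separately, since there the left side collapses to $\lambda A+(1-\lambda)B$ and the inequality becomes the linear statement $\lambda(r^2+\alpha^2)+(1-\lambda)(1+\alpha^2 r^2)<r^2 s^{-2}$; this is precisely what already follows from the $q=1$ analysis, namely the decrease of $F_p$ in $\alpha$ established via (\ref{4.3}), (\ref{4.4}) and (\ref{4.5}). For $q>1$ the main work is to control the factor $\left(\lambda A^q+(1-\lambda)B^q\right)^{\frac{1}{q}-1}\left(\lambda A^{q-1}+(1-\lambda)B^{q-1}\right)$. The natural tool is Hölder's (or the power-mean) inequality applied to the weighted means: for the probability weights $\lambda,1-\lambda$ one has
\[
\lambda A^{q-1}+(1-\lambda)B^{q-1}\le \left(\lambda A^q+(1-\lambda)B^q\right)^{\frac{q-1}{q}},
\]
which is the comparison of the $(q-1)$-th and $q$-th weighted power means (valid because $q-1<q$ and $A,B>0$). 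Multiplying through, this yields
\[
\left(\lambda A^q+(1-\lambda)B^q\right)^{\frac{1-q}{q}}\left(\lambda A^{q-1}+(1-\lambda)B^{q-1}\right)\le 1,
\]
so the left side of the target inequality is bounded above by $1$. It then suffices to check $1< r^2 s^{-2}$, which is false in general since $s<r$ only gives $r^2 s^{-2}>1$ — so in fact this crude bound already suffices: $r^2/s^2>1\ge \text{(left side)}$.

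The subtle point, and the step I expect to be the main obstacle, is that the power-mean bound gives $\le 1$, hence only $\frac{\partial F_{p,q}}{\partial\alpha}\le 2\alpha-2\alpha r^2 s^{-2}=2\alpha(1-r^2s^{-2})<0$, which is already strict because $s<r$ forces $r^2s^{-2}>1$. So the inequality is in fact comfortably strict and no delicate estimate is needed; the only care required is to verify the power-mean comparison in the degenerate directions (when $A=B$, i.e. $\alpha=1$, which is excluded, or when $\lambda\to 0,1$, i.e. $s\to r$ or $s\to1$, the endpoints of integration) and to confirm that the inequality $\lambda A^{q-1}+(1-\lambda)B^{q-1}\le(\lambda A^q+(1-\lambda)B^q)^{(q-1)/q}$ indeed holds with the correct direction for $q\ge1$. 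I would present the argument as: compute $\partial_\alpha F_{p,q}$ explicitly, factor out $2\alpha>0$, apply the power-mean inequality to bound the bracketed factor by $1$, and conclude using $r>s$ that the remaining expression is strictly negative, giving the desired monotonicity. Equality analysis (needed only if one later wants strictness at the level of $\Theta_{p,q}$) follows from the equality case of the power-mean inequality, namely $A=B$, which cannot hold on a set of positive measure in $s$.
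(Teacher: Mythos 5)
There is a genuine gap, and it originates in a computational error at the very first step: you assert $\partial_\alpha A=\partial_\alpha B=2\alpha$, but $B=1+\alpha^2 r^2$, so $\partial_\alpha B=2\alpha r^2$, not $2\alpha$. With the correct derivatives the chain rule gives
\begin{equation*}
\frac{\partial}{\partial\alpha}\bigl(\lambda A^q+(1-\lambda)B^q\bigr)^{\frac{1}{q}}
=2\alpha r^2\bigl(\lambda A^q+(1-\lambda)B^q\bigr)^{\frac{1-q}{q}}\bigl(\lambda r^{-2}A^{q-1}+(1-\lambda)B^{q-1}\bigr),
\end{equation*}
so after factoring out $2\alpha r^2>0$ the inequality to prove is not your
$\bigl(\lambda A^q+(1-\lambda)B^q\bigr)^{\frac{1-q}{q}}\bigl(\lambda A^{q-1}+(1-\lambda)B^{q-1}\bigr)<r^2 s^{-2}$
but rather
\begin{equation*}
\bigl(\lambda A^q+(1-\lambda)B^q\bigr)^{\frac{1-q}{q}}\bigl(\lambda r^{-2}A^{q-1}+(1-\lambda)B^{q-1}\bigr)< s^{-2}.
\end{equation*}
This destroys your main conclusion that ``the crude bound by $1$ already suffices.'' Your power-mean step (which is correct, and is exactly Lemma \ref{Lem6.6} of the paper) bounds the left side by the weighted average
$\frac{\lambda r^{-2}A^{q-1}+(1-\lambda)B^{q-1}}{\lambda A^{q-1}+(1-\lambda)B^{q-1}}$,
which lies in $(r^{-2},1)$ — and the target $s^{-2}$ also lies in $(r^{-2},1)$. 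So bounding by $1$ proves nothing, and the estimate is in fact delicate: as $s\to r^-$ one has $\lambda\to 1$ and both sides tend to $r^{-2}$, so the inequality is asymptotically tight and no crude comparison can work.

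The missing ingredients, which are exactly what the paper supplies, are two finer facts: (a) since $A>B>1$, replacing $A^{q-1}$ by $B^{q-1}$ in the term multiplying the \emph{smaller} value $r^{-2}$ only increases the weighted average, so
$\frac{\lambda r^{-2}A^{q-1}+(1-\lambda)B^{q-1}}{\lambda A^{q-1}+(1-\lambda)B^{q-1}}\le \lambda r^{-2}+(1-\lambda)$;
and (b) the strict $q=1$ inequality $\lambda r^{-2}+(1-\lambda)<s^{-2}$, which is inequality (\ref{6.18}) and is a restatement of (\ref{4.5}) — the $p<-1$ content you correctly identified for your separate $q=1$ case but then discarded for $q>1$. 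In other words, your instinct that the $q=1$ analysis via (\ref{4.3})--(\ref{4.5}) handles the base case was right; the correct proof for $q>1$ must still route through that same inequality, combined with the power-mean bound and the ordering $A>B$, rather than replace it with the trivial bound $1<r^2/s^2$. Once the derivative of $B$ is fixed, your outline can be repaired along these lines, and it then coincides with the paper's argument.
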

\begin{proof}
Let $A(\alpha)=r^2+\alpha^2,\ B(\alpha)=1+r^2\alpha^2$ and $\lambda=\frac{s^{2p}-1}{r^{2p}-1}$. Since $0<\alpha<1<s<r$, by (\ref{4.5}), we have $A>B>1$ and
\begin{equation}\label{6.18}
    \lambda r^{-2}+(1-\lambda)<s^{-2}.
\end{equation}

By (\ref{6.9}), fixing $1<s<r$, we define
\begin{equation*}
    L(\alpha)=F_{p,q}(s,\alpha,r)=(\lambda A^q+(1-\lambda) B^q)^{\frac{1}{q}}-s^2-\alpha^2 r^2 s^{-2}.
\end{equation*}
Taking the derivative of $L(\alpha)$ yields
\begin{align*}
    L'(\alpha)=2\alpha r^2\left[(\lambda A^q+(1-\lambda) B^q)^{\frac{1-q}{q}}(\lambda r^{-2} A^{q-1}+(1-\lambda)B^{q-1})-s^{-2}\right].
\end{align*}
Since $q\ge 1$, $1<s<r$ and $A>B>1$, using Lemma \ref{Lem6.6} and (\ref{6.18}), we have
\begin{align*}
    L'(\alpha)
    &\le 2\alpha r^2\left[(\lambda A^{q-1}+(1-\lambda) B^{q-1})^{-1}(\lambda r^{-2} A^{q-1}+(1-\lambda)B^{q-1})- s^{-2}\right]\\
    &= 2\alpha r^2\ \frac{\lambda A^{q-1}(r^{-2}-s^{-2})+(1-\lambda) B^{q-1}(1-s^{-2})}{\lambda A^{q-1}+(1-\lambda) B^{q-1}}\\
    &\le 2\alpha r^2 B^{q-1} \ \frac{\lambda (r^{-2}-s^{-2})+(1-\lambda) (1-s^{-2})}{\lambda A^{q-1}+(1-\lambda) B^{q-1}} \\
    &<0.
\end{align*}
This completes the proof of the lemma.
\end{proof}

Therefore, we obtain
\begin{thm}\label{Thm6.9}
Let $p<-1$, $q\ge 1$ and $0<\alpha<1$. $\Theta_{p,q}\{\alpha,r\}$ is monotone increasing in $\alpha$.
\end{thm}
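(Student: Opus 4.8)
The goal is to prove Theorem \ref{Thm6.9}: that $\Theta_{p,q}\{\alpha,r\}$ is monotone increasing in $\alpha$ for $p<-1$, $q\ge 1$, and $0<\alpha<1$. The plan is to reduce this directly to the pointwise monotonicity of the integrand, which has already been packaged into Lemma \ref{Lem6.8}. Since
\begin{equation*}
    \Theta_{p,q}\{\alpha,r\}=\int_{1}^r \frac{ds}{\sqrt{F_{p,q}(s,\alpha,r)}},
\end{equation*}
and the limits of integration $1$ and $r$ do not depend on $\alpha$, the dependence on $\alpha$ enters \emph{only} through the integrand $F_{p,q}(s,\alpha,r)^{-1/2}$. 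The function $x\mapsto x^{-1/2}$ is strictly decreasing on $(0,\infty)$, so if $F_{p,q}(s,\alpha,r)$ decreases as $\alpha$ increases, then $F_{p,q}(s,\alpha,r)^{-1/2}$ increases. Thus $\Theta_{p,q}\{\alpha,r\}$ is an integral of an $\alpha$-increasing integrand over a fixed interval, hence itself increasing.

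First I would invoke Lemma \ref{Lem6.8}, which states precisely that for each fixed $1<s<r$, the map $\alpha\mapsto F_{p,q}(s,\alpha,r)$ is strictly decreasing on $(0,1)$. Combined with the positivity $F_{p,q}(s,\alpha,r)>0$ on the open interval $s\in(1,r)$ (guaranteed because $u(\tau)$ is a genuine solution with $u_\tau>0$ strictly between consecutive critical points, equivalently the radicand in \eqref{6.5} is positive), I conclude that for $\alpha_1<\alpha_2$ in $(0,1)$,
\begin{equation*}
    \frac{1}{\sqrt{F_{p,q}(s,\alpha_1,r)}}<\frac{1}{\sqrt{F_{p,q}(s,\alpha_2,r)}}
    \quad\text{for all }s\in(1,r).
\end{equation*}
Integrating this strict pointwise inequality over $s\in(1,r)$ yields $\Theta_{p,q}\{\alpha_1,r\}<\Theta_{p,q}\{\alpha_2,r\}$, which is exactly the asserted monotonicity.

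There is essentially no obstacle here, since the substantive analytic work has already been discharged inside Lemma \ref{Lem6.8}; the present theorem is the clean corollary. The only point requiring a moment of care is that the integrand has an integrable singularity at the endpoints $s=1$ and $s=r$ (where $F_{p,q}$ vanishes), so one should note that the pointwise inequality holds on the open interval and that both integrals converge. This is already implicit in the fact that $\Theta_{p,q}\{\alpha,r\}$ is well-defined and finite (indeed continuous on $\bar{D}_2$, by Theorem \ref{Thm6.3}), so integrating the strict inequality across $(1,r)$ is legitimate and preserves strictness, completing the proof of Theorem \ref{Thm6.9}.
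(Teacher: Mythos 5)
Your proposal is correct and follows exactly the paper's route: the paper also deduces Theorem \ref{Thm6.9} directly from Lemma \ref{Lem6.8}, since the integration limits in \eqref{6.8} are independent of $\alpha$ and the integrand $F_{p,q}(s,\alpha,r)^{-1/2}$ is pointwise increasing in $\alpha$. Your additional remarks on positivity of $F_{p,q}$ on $(1,r)$ and the integrable endpoint singularities are sound care that the paper leaves implicit.
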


$\ $

Finally, we derive the upper and lower bound for $\Theta_{p,q}$. Denote
\begin{equation*}
    \hat{\Theta}_{p,q}(r):=\Theta_{p,q}\{0,r\}
    =\int_1^r \frac{ds}{\sqrt{\left(\frac{1-s^{2p}}{1-r^{2p}}r^{2q} +\frac{s^{2p}-r^{2p}}{1-r^{2p}}\right)^{\frac{1}{q}}-s^2}} .
\end{equation*}
Making the change of variable $s^{\beta}=x=\frac{r^{\beta}-1}{2}z+\frac{r^{\beta}+1}{2}$ with $\beta=\frac{4q-2p}{3}$ yields
\begin{equation*}
    \hat{\Theta}_{p,q}(r)=\int_{-1}^1 \frac{dz}{\sqrt{\hat{J}_{p,q}(r,z)}},
\end{equation*}
where 
\begin{equation*}
    \hat{J}_{p,q}(r,z)=\frac{4\beta^2}{(r^{\beta}-1)^2}\left[x^{\frac{2(\beta-1)}{\beta}}\left(\frac{r^{2q}-r^{2p}}{1-r^{2p}}+\frac{1-r^{2q}}{1-r^{2p}}x^{\frac{2p}{\beta}}\right)^{\frac{1}{q}}-x^2\right].
\end{equation*}
Note that $\hat{\Theta}_{p,q}(r)$ is the period function ${\Theta}(2p,2q,r)$ given by our previous paper \cite{LW22}. We have the following properties.
\begin{prop}[\cite{LW22}]\label{Prop6.10}
Let $p\le -1$ and $q\ge 1$. Then $\hat{\Theta}_{p,q}(r)$ is continuous on $(1,\infty)$, and has the following limiting values:
\begin{align}
    & \label{6.19}
    \lim\limits_{r\to\infty}\hat{\Theta}_{p,q}(r)=\frac{\pi}{2},\\
    & \label{6.20}
    \lim\limits_{r\to 1}\hat{\Theta}_{p,q}(r)=\frac{\pi}{\sqrt{2q-2p}}.
\end{align}
\end{prop}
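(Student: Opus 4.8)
The statement to prove is Proposition~\ref{Prop6.10}, which asserts the continuity of $\hat{\Theta}_{p,q}(r)$ on $(1,\infty)$ together with its two limiting values \eqref{6.19} and \eqref{6.20}.

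\textbf{Overall strategy.} The plan is to read off both limits directly from the integral representation
\begin{equation*}
   \hat{\Theta}_{p,q}(r)=\int_{-1}^1 \frac{dz}{\sqrt{\hat{J}_{p,q}(r,z)}},
\end{equation*}
obtained via the substitution $s^{\beta}=x=\frac{r^{\beta}-1}{2}z+\frac{r^{\beta}+1}{2}$ with $\beta=\frac{4q-2p}{3}$. This is precisely the approach used for Theorem~\ref{Thm6.3}, where the limits $\lim_{r\to 1}$ and $\lim_{r\to+\infty}$ of $\Theta_{p,q}\{\alpha,r\}$ were computed. Indeed, since $\hat{\Theta}_{p,q}(r)=\Theta_{p,q}\{0,r\}$ by definition, the two desired limits \eqref{6.19} and \eqref{6.20} should fall out as the $\alpha\to 0$ specializations of \eqref{6.12} and \eqref{6.13}, which give $\frac{\pi}{\sqrt{2q-2p}}$ and $\frac{\pi}{2}$ respectively. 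Alternatively, since the excerpt notes that $\hat{\Theta}_{p,q}(r)$ coincides with the period function $\Theta(2p,2q,r)$ from the authors' earlier paper \cite{LW22}, one may simply cite the corresponding result there; the cleaner route for a self-contained presentation, however, is the direct computation.

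\textbf{Key steps in order.} First I would establish continuity: for fixed $p\le -1$, $q\ge 1$ and each $r>1$, the integrand $1/\sqrt{\hat{J}_{p,q}(r,z)}$ is continuous in $(r,z)$ on the open region and, although the integrand has integrable singularities at the endpoints $z=\pm 1$ (where $\hat J_{p,q}$ vanishes to first order in $1-z^2$), the substitution has already regularized these into the square-root singularities of $\sqrt{1-z^2}$; continuity then follows from a dominated-convergence argument uniform on compact $r$-intervals. Second, for \eqref{6.20} I would carry out the Taylor expansion of $\hat{J}_{p,q}(r,z)$ as $r\to 1$, exactly as in the proof of \eqref{6.12} but with $\alpha=0$; the leading coefficient reduces to $(2-2p)+(2q-2)=2q-2p$ times $(1-z^2)$, and integrating $\int_{-1}^1 dz/\sqrt{(2q-2p)(1-z^2)}$ yields $\frac{\pi}{\sqrt{2q-2p}}$. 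Third, for \eqref{6.19} I would substitute $y=\frac{z+1}{2}$ and track the $r\to+\infty$ asymptotics of $\hat{J}_{p,1}$-type form (with the choice $\beta=1$ as in the proof of \eqref{6.13}), obtaining a leading term proportional to $1-y^2$ whose integral $\int_0^1 dy/\sqrt{1-y^2}$ equals $\frac{\pi}{2}$.

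\textbf{Main obstacle.} The genuinely delicate point is the $r\to 1$ expansion \eqref{6.20}: one must verify that the exponent choice $\beta=\frac{4q-2p}{3}$ is exactly what kills the potentially divergent first-order term, so that the limit exists and equals the stated value rather than merely being bounded. This is the same mechanism that made the proof of \eqref{6.12} work, and the careful bookkeeping of the Taylor coefficients of the bracketed expression raised to the power $1/q$ is where errors most easily creep in. Since Proposition~\ref{Prop6.10} is attributed to \cite{LW22}, the safest exposition is to state that the result is the $\alpha=0$ case of Theorem~\ref{Thm6.3} combined with the identification $\hat{\Theta}_{p,q}(r)=\Theta(2p,2q,r)$, and to refer to \cite{LW22} for the detailed computation, noting that the argument is a verbatim repetition of the proofs of \eqref{6.12} and \eqref{6.13} specialized to $\alpha=0$.
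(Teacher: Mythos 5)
Your proposal is correct and follows essentially the same route as the paper: Proposition \ref{Prop6.10} is simply quoted from \cite{LW22} via the identification $\hat{\Theta}_{p,q}(r)=\Theta(2p,2q,r)$, and the direct verification you sketch is exactly the $\alpha=0$ specialization of the computations proving \eqref{6.12} and \eqref{6.13} in Theorem \ref{Thm6.3} (note $F_{p,q}(s,0,r)$ is literally the integrand defining $\hat{\Theta}_{p,q}$, so no interchange of the limits $\alpha\to 0$ and $r\to 1$ is needed, as you correctly arrange by redoing the expansion at $\alpha=0$).

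One point in your ``main obstacle'' paragraph is off, though it does not invalidate the argument. The choice $\beta=\frac{4q-2p}{3}$ plays no role in the existence of the limit \eqref{6.20}: since $s^{\beta}=x$ is merely a change of variables, $\hat{\Theta}_{p,q}(r)$ is independent of $\beta>0$, and in the expansion the first-order term carries an explicit factor $(r-1)$, hence tends to zero uniformly in $z\in(-1,1)$ for \emph{every} $\beta>0$ --- there is no potentially divergent term to kill. The specific value $\beta=\frac{4q-2p}{3}$ is the one making the first-order coefficient $\beta-\frac{2(2q-p)}{3}$ vanish identically at $\alpha=0$, which is what is needed for the monotonicity statement $\frac{\partial}{\partial r}\hat{J}_{p,q}(r,z)<0$ of Proposition \ref{Prop6.11} (near $r=1$ a nonzero first-order coefficient would force $\frac{\partial}{\partial r}\hat{J}_{p,q}>0$ for some $z$), not for the limits of Proposition \ref{Prop6.10}. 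With that correction, your three steps --- continuity by dominated convergence away from the integrable endpoint singularities, the Taylor expansion giving leading coefficient $(2q-2p)(1-z^2)$ for \eqref{6.20}, and the $\beta=1$, $y=\frac{z+1}{2}$ asymptotics giving $\int_0^1 dy/\sqrt{1-y^2}=\frac{\pi}{2}$ for \eqref{6.19} --- are exactly the computation carried out in \cite{LW22}.
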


\begin{prop}[\cite{LW22}]\label{Prop6.11}
Let $p< -1$, $q\ge 1$ and $r>1$. Then $\frac{\partial}{\partial r}\hat{J}_{p,q}(r,z)<0$ for any $z\in (-1,1)$. Moreover, $\hat{\Theta}_{p,q}(r)$ is monotone increasing in $r$.    
\end{prop}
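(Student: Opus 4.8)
The plan is to reduce the whole proposition to the single pointwise inequality $\partial_r \hat J_{p,q}(r,z)<0$ and then to prove that inequality by direct differentiation, with the exponent $\beta=\frac{4q-2p}{3}$ doing the essential bookkeeping. For the reduction, observe that for each fixed $z\in(-1,1)$ the radicand $\hat J_{p,q}(r,z)$ is a strictly positive multiple of $F_{p,q}(s,0,r)$ (with $s=x^{1/\beta}$), namely
\[
\hat J_{p,q}(r,z)=\frac{4\beta^2}{(r^\beta-1)^2}\,x^{\frac{2(\beta-1)}{\beta}}\,F_{p,q}(s,0,r),
\]
which is positive for $s\in(1,r)$ and vanishes only at the endpoints $z=\pm1$. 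The crucial feature of this substitution is that the domain of integration $(-1,1)$ no longer depends on $r$. Hence, granting $\partial_r\hat J_{p,q}<0$, for any $1<r_1<r_2$ we get $\hat J_{p,q}(r_1,z)>\hat J_{p,q}(r_2,z)>0$, so $\hat J_{p,q}(r_1,z)^{-1/2}<\hat J_{p,q}(r_2,z)^{-1/2}$ pointwise, and integrating in $z$ yields $\hat\Theta_{p,q}(r_1)<\hat\Theta_{p,q}(r_2)$. This monotone-integrand argument avoids differentiation under the integral sign and therefore sidesteps the square-root singularities of $\hat J_{p,q}^{-1/2}$ at $z=\pm1$.

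For the pointwise inequality, write $\hat J_{p,q}=\frac{4\beta^2}{(r^\beta-1)^2}\Phi$ with
\[
\Phi(r,z)=x^{\frac{2(\beta-1)}{\beta}}\Psi^{\frac1q}-x^2,\qquad
\Psi=\frac{r^{2q}-r^{2p}}{1-r^{2p}}+\frac{1-r^{2q}}{1-r^{2p}}\,x^{\frac{2p}{\beta}},
\]
where $x=\frac{r^\beta-1}{2}z+\frac{r^\beta+1}{2}$. Since $\Phi>0$ on $(-1,1)$, the claim $\partial_r\hat J_{p,q}<0$ is equivalent to $\partial_r\log\Phi<\frac{2\beta r^{\beta-1}}{r^\beta-1}$. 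I would compute $\partial_r\log\Phi$ at fixed $z$, using $\partial_r x=\tfrac12\beta r^{\beta-1}(1+z)$, and organize the outcome by the powers $x^{2/\beta}$ and $x^{2p/\beta}$. The point of taking $\beta=\frac{4q-2p}{3}$ is that it is exactly the value making the leading ($z$-linear) contribution in the expansion of $\hat J_{p,q}$ about $r=1$ vanish, as one sees by setting $\alpha=0$ in the Taylor expansion appearing in the proof of Theorem \ref{Thm6.3}; this choice specializes to Ben Andrews' exponent $\frac{4-2p}{3}$ when $q=1$, consistently recovering Proposition \ref{Prop4.6}.

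The main obstacle, absent in the $q=1$ case, is the factor $\Psi^{\frac1q}$ together with its derivative $\frac1q\Psi^{\frac{1-q}{q}}\partial_r\Psi$: for $q\neq1$ this power is neither linear nor of a transparently fixed sign type, so the negativity of $\partial_r\log\Phi-\frac{2\beta r^{\beta-1}}{r^\beta-1}$ is not immediate. I would isolate this factor and control it with the convexity devices already used in this paper, namely the function $k(t)=t\log t$ from Lemma \ref{Lem6.6} (governing the $q$-dependence) and the monotone ratio function $h$ from Lemma \ref{Lem4.1} (governing the $r$-dependence of $\frac{1-x^{2p/\beta}}{1-r^{2p}}$), so as to reduce the full inequality, for each fixed $r$, to an elementary one-variable inequality in $s\in(1,r)$. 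That inequality is then settled by examining its endpoint behaviour at $s=1$ and $s=r$ (where $\Phi$ vanishes) and invoking convexity on the interior. This is precisely the scheme of \cite{Ben03} and \cite{LW22}, and the complete computation is carried out in the latter.
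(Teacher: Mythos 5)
The reduction half of your argument is sound: the substitution makes the domain of integration the fixed interval $(-1,1)$; your factorization $\hat J_{p,q}(r,z)=\frac{4\beta^2}{(r^\beta-1)^2}\,x^{\frac{2(\beta-1)}{\beta}}F_{p,q}(s,0,r)$ with $s=x^{1/\beta}$ is algebraically correct, and positivity of $F_{p,q}(s,0,r)$ for $1<s<r$ does follow from the paper's tools (by Lemma \ref{Lem6.6}, $\Psi^{1/q}\ge \lambda r^2+(1-\lambda)$ with $\lambda=\frac{s^{2p}-1}{r^{2p}-1}$, and then (\ref{4.4}) gives $\lambda r^2+(1-\lambda)>s^2$); the endpoint zeros of $\hat J_{p,q}$ at $z=\pm1$ are simple, so all the integrals converge and pointwise monotonicity of the radicand yields monotonicity of $\hat\Theta_{p,q}$ without differentiating under the integral sign. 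Your identification of $\beta=\frac{4q-2p}{3}$ as exactly the exponent killing the $z$-linear term of the expansion at $r=1$ (set $\alpha=0$ in the expansion in the proof of Theorem \ref{Thm6.3}) is also correct, and it specializes to Andrews' exponent $\frac{4-2p}{3}$ at $q=1$.

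The genuine gap is that you never prove the inequality $\frac{\partial}{\partial r}\hat J_{p,q}(r,z)<0$, which is the entire analytic content of the proposition; everything you say about it is a plan stated in the conditional. You reformulate the claim as $\partial_r\log\Phi<\frac{2\beta r^{\beta-1}}{r^\beta-1}$, but you give no actual estimate of the problematic term $\frac1q\Psi^{\frac{1-q}{q}}\partial_r\Psi$, you never state the ``elementary one-variable inequality'' to which the problem is supposedly reduced, and the proposed endpoint analysis is precisely where care is required: $\Phi$ vanishes at $z=\pm1$, so $\partial_r\log\Phi$ is singular there, and verifying that the comparison survives near the endpoints is where the special choice of $\beta$ actually enters -- none of this is carried out. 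Writing ``the complete computation is carried out in \cite{LW22}'' is a citation, not a proof. In fairness, the paper under review does the same thing: Proposition \ref{Prop6.11} is stated as a quoted result of \cite{LW22} (following the scheme of \cite{Ben03}) with no internal proof, and your outline faithfully reconstructs the route of that source. But judged as a self-contained attempt, you have established only the easy implication (pointwise monotone radicand implies monotone period) and left the hard differential inequality -- the statement actually being asserted -- unproven.
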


This together with Theorem \ref{Thm6.5}, Theorem \ref{Thm6.7} and Theorem \ref{Thm6.9} gives
\begin{cor}\label{Cor6.12}
Let $p<-1$ and $q\ge 1$. Then for any $0<\alpha<1<r$
\begin{equation}\label{6.21}
     \frac{\pi}{2}>\Theta_{p,q}\{\alpha,r\}> \hat{\Theta}_{p,q}(r)>\frac{\pi}{\sqrt{2q-2p}}.
\end{equation}
\end{cor}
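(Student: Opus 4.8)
The plan is to prove the three strict inequalities in \eqref{6.21} separately, each by pairing a monotonicity statement already established with a boundary value computed in Theorem \ref{Thm6.3} or Proposition \ref{Prop6.10}. The guiding observation is that the degenerate parameter values $p=-1$, $q=1$, $\alpha=0$ and the limit $r\to 1^+$ are exactly the places where $\Theta_{p,q}$ (or $\hat\Theta_{p,q}$) takes a known closed-form value; so each bound follows by driving one variable toward such an endpoint along a direction of strict monotonicity.

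For the upper bound $\Theta_{p,q}\{\alpha,r\}<\frac{\pi}{2}$ I would reduce to the unweighted base case $p=-1$, $q=1$ via a two-step cascade. Since $p<-1$, Theorem \ref{Thm6.5} (monotonicity increasing in $p$) gives $\Theta_{p,q}\{\alpha,r\}<\Theta_{-1,q}\{\alpha,r\}$; then, since $q\ge 1$, Theorem \ref{Thm6.7} (monotonicity decreasing in $q$) gives $\Theta_{-1,q}\{\alpha,r\}\le \Theta_{-1,1}\{\alpha,r\}$. Finally $\Theta_{-1,1}\{\alpha,r\}=\Theta_{-1}\{\alpha,r\}=\frac{\pi}{2}$, since at $q=1$ the period function reduces to the one of Section \ref{sec:2} and $\Theta_{-1}\equiv\frac{\pi}{2}$ by \eqref{3.8}. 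Chaining these three relations yields $\Theta_{p,q}\{\alpha,r\}<\frac{\pi}{2}$, strict because the first step is already strict for $p<-1$.

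For the middle inequality, Theorem \ref{Thm6.9} (monotonicity increasing in $\alpha$, valid for $p<-1$ and $q\ge 1$) together with $\alpha>0$ gives $\Theta_{p,q}\{\alpha,r\}>\Theta_{p,q}\{0,r\}=\hat\Theta_{p,q}(r)$. For the lower bound, Proposition \ref{Prop6.11} (here $\partial_r\hat J_{p,q}<0$, so $\hat\Theta_{p,q}$ is strictly increasing in $r$) combined with the limit $\lim_{r\to 1^+}\hat\Theta_{p,q}(r)=\frac{\pi}{\sqrt{2q-2p}}$ from Proposition \ref{Prop6.10} yields $\hat\Theta_{p,q}(r)>\frac{\pi}{\sqrt{2q-2p}}$ for every $r>1$. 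Assembling the three displays proves \eqref{6.21}.

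The only point demanding care is \emph{strictness}: all three inequalities are strict, yet the target values $\frac{\pi}{2}$ and $\frac{\pi}{\sqrt{2q-2p}}$ are attained only in the limiting regimes $q=1$ after $p=-1$, $\alpha=0$, and $r\to 1^+$. Converting these limits into strict inequalities requires that the monotonicities used are strict on the relevant open ranges—Theorem \ref{Thm6.5} and Theorem \ref{Thm6.9} for $p<-1$, and Proposition \ref{Prop6.11} for $r>1$—together with the continuity of $\Theta_{p,q}$ furnished by Theorem \ref{Thm6.3}. One must also verify domain compatibility along the cascade: Theorem \ref{Thm6.9} is stated for $p<-1$ (which holds by hypothesis), while Theorems \ref{Thm6.5} and \ref{Thm6.7} remain valid at the boundary values $p=-1$ and $q=1$, so the evaluations of $\Theta_{-1,q}$ and $\Theta_{-1,1}$ are legitimate. (One could alternatively obtain the upper bound directly from Theorem \ref{Thm6.9} by letting $\alpha\to 1^-$, using the identity $\Theta_{p,q}\{1,r\}=\frac{\pi}{2}$; but since that identity is not recorded in the preceding sections, the cascade through $p=-1$, $q=1$ is the cleaner route.)
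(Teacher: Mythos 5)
Your proposal is correct and follows essentially the same route as the paper, which obtains Corollary \ref{Cor6.12} precisely by combining the monotonicity in $p$ (Theorem \ref{Thm6.5}) and in $q$ (Theorem \ref{Thm6.7}) with the value $\Theta_{-1}=\frac{\pi}{2}$ from (\ref{3.8}) for the upper bound, the monotonicity in $\alpha$ (Theorem \ref{Thm6.9}) for the middle inequality, and Propositions \ref{Prop6.10}--\ref{Prop6.11} for the lower bound. Your explicit attention to strictness and to the validity of Theorems \ref{Thm6.5} and \ref{Thm6.7} at the boundary values $p=-1$, $q=1$ fills in details the paper leaves implicit, but the argument is the same.
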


\subsection{Proofs of Theorem \ref{Thm1.3} (ii) and Theorem \ref{Thm1.4}}$\ $

If $q\ge 1$ and $q-8\le p<-1$, using Corollary \ref{Cor6.12}, we have
\begin{align*}
    \frac{\pi}{2}>\Theta_{p,q}\{\alpha,r\}>\frac{\pi}{4}
\end{align*}
for any $0<\alpha<1<r$. Therefore, if $\gamma>0$ and $E>\bar{E}_{\gamma}^*$, then
\begin{align*}
    \Theta_{p,q}(\gamma,E)=\Theta_{p,q}\{\alpha(\gamma,E),r(\gamma,E)\}\in \left(\frac{\pi}{4},\frac{\pi}{2}\right).
\end{align*}
It follows from Lemma \ref{Lem 6.2} (i) that the solution to (\ref{6.1}) must be constant. Moreover, the solution to (\ref{6.1}) is unique. We complete the proof of Theorem \ref{Thm1.3} (ii).

If $p<-1,\ q\ge 1$ and $\gamma>\gamma_{p,q,l}$, then it follows from (\ref{6.15}) and (\ref{6.16}) that 
\begin{equation*}
    \lim\limits_{E\to +\infty}\Theta_{p,q}(\gamma,E)=\frac{\pi}{2}
\end{equation*}
and
\begin{equation*}
    \lim\limits_{E\to (\bar{E}_{\gamma}^*)^+}\Theta_{p,q}(\gamma,E)<\frac{\pi}{2(l+1)}.
\end{equation*}
Therefore, there exist at least $l$ values $\{E_j\}_{j=1}^{l}$ such that $E_j>\bar{E}_{\gamma}^*$ and $\Theta_{p,q}(\gamma,E_j)=\frac{\pi}{2(j+1)}$ for $1\le j\le l$. It follows from Lemma \ref{Lem 6.2} (ii) that there exist at least $(l+1)$ solutions to (\ref{6.1}). We complete the proof of Theorem \ref{Thm1.4}.


$\ $

\begin{bibdiv}
\begin{biblist}

\bib{Ben03}{article}{
	author={Andrews, Ben},
	title={Classification of limiting shapes for isotropic curve flows},
	journal={J. Amer. Math. Soc.},
	volume={16},
	number={2},
    pages={443--459},
    year={2003}
}

\bib{CW06}{article}{
	author={Chou, Kai-Seng},
	author={Wang, Xu-Jia},
	title={The {$L_p$}-{M}inkowski problem and the {M}inkowski problem in centroaffine geometry},
	journal={Adv. Math.},
	volume={205},
    pages={33--83},
    year={2006}
}

\bib{HWZ}{article}{
   author={Hu, Yingxiang},
   author={Wei, Yong},
   author={Zhou, Tailong},
   title={A Heintze-Karcher type inequality in hyperbolic space},
   journal={J. Geom. Anal.},
   volume={34},
   date={2024},
   number={4},
   pages={Paper No. 113, 17},
}

\bib{HLY16}{article}{
	author={Huang, Yong},
	author={Lutwak, Erwin},
	author={Yang, Deane},
	author={Zhang, Gaoyong},
	title={Geometric measures in the dual {B}runn-{M}inkowski theory and their associated {M}inkowski problems},
	journal={Acta Math.},
	volume={216},
	number={2},
    pages={325--388},
    year={2016}
} 

\bib{HZ18}{article}{
	author={Huang, Yong},
	author={Zhao, Yiming},
	title={On the {$L_p$} dual {M}inkowski problem},
	journal={Adv. Math.},
	volume={332},
    pages={57--84},
    year={2018}
} 

\bib{HLY05}{article}{
	author={Hug, Daniel},
	author={Lutwak, Erwin},
	author={Yang, Deane},
	author={Zhang, Gaoyong},
	title={On the {$L_p$} {M}inkowski problem for polytopes},
	journal={Discrete Comput. Geom.},
	volume={33},
	number={4},
	pages={699--715},
	year={2005}
} 

\bib{LW22}{article}{
    author={Li, Haizhong},
    author={Wan, Yao},
	title={Classification of solutions for the planar isotropic $L_p$ dual Minkowski problem},
    year={2022},
    eprint={arXiv:2209.14630},
	archivePrefix={arXiv},
	primaryClass={math.DG}
}

\bib{LW23}{article}{
   author={Li, Haizhong},
   author={Wan, Yao},
   title={The Christoffel problem in the hyperbolic plane},
   journal={Adv. in Appl. Math.},
   volume={150},
   date={2023},
   pages={Paper No. 102557, 17},
}

\bib{LW23-10}{article}{
	title={The discrete horospherical $p$-Minkowski problem in hyperbolic space},
	author={Li, Haizhong},
	author={Wan, Yao},
    author={Xu, Botong},
	year={2023},
	eprint={arXiv:2310.03516},
	archivePrefix={arXiv},
	primaryClass={math.MG}
}

\bib{LX}{article}{
    author={Li, Haizhong},
    author={Xu, Botong},
	title={Hyperbolic $p$-sum and horospherical $p$-Brunn-Minkowski theory in hyperbolic space},
    year={2022},
    eprint={arXiv:2211.06875},
	archivePrefix={arXiv},
	primaryClass={math.MG}
}

\bib{Lut75}{article}{
	author={Lutwak, Erwin},
	title={Dual mixed volumes},
	journal={Pacific J. Math.},
	number={2},
	volume={58},
    pages={531--538},
    year={1975}
}  

\bib{Lut93}{article}{
   author={Lutwak, Erwin},
   title={The Brunn-Minkowski-Firey theory. I. Mixed volumes and the
   Minkowski problem},
   journal={J. Differential Geom.},
   volume={38},
   date={1993},
   number={1},
   pages={131--150},
}

\bib{Lut96}{article}{
   author={Lutwak, Erwin},
   title={The Brunn-Minkowski-Firey theory. II. Affine and geominimal
   surface areas},
   journal={Adv. Math.},
   volume={118},
   date={1996},
   number={2},
   pages={244--294},
}

\bib{LYZ04}{article}{
	author={Lutwak, Erwin},
	author={Yang, Deane},
	author={Zhang, Gaoyong},
	title={On the {$L_p$}-{M}inkowski problem},
	journal={Trans. Amer. Math. Soc.},
	number={11},
	volume={356},
    pages={4359--4370},
    year={2004}
}  

\bib{LYZ18}{article}{
	author={Lutwak, Erwin},
	author={Yang, Deane},
	author={Zhang, Gaoyong},
	title={{$L_p$} dual curvature measures},
	journal={Adv. Math.},
	volume={329},
    pages={85--132},
    year={2018}
}  

\bib{PP15}{article}{
   author={Palmer, Bennett},
   author={Perdomo, Oscar M.},
   title={Rotating drops with helicoidal symmetry},
   journal={Pacific J. Math.},
   volume={273},
   date={2015},
   number={2},
   pages={413--441},
}

\bib{Per10}{article}{
	author={Perdomo, Oscar M.},
	title={Embedded constant mean curvature hypersurfaces on spheres},
	journal={Asian J. Math.},
	volume={14},
	number={1},
    pages={73--108},
    year={2010}
}

\bib{Sch14}{book}{
	author={Schneider, Rolf},
	title={Convex bodies: the {B}runn-{M}inkowski theory},
	series={Encyclopedia of Mathematics and its Applications},
	publisher= {Cambridge University Press, Cambridge},
	volume={151},
    pages={xxii+736},
    year={2014}
}  

\end{biblist}
\end{bibdiv}

\end{document}